\def\Z{\mathbb{Z}}
\def\N{\mathbb{N}}
\def\Q{\mathbb{Q}}
\def\oQ{\overline{\mathbb{Q}}}
\def\Gal{\mathrm{Gal}}
\def\res{\mathrm{res}}
\def\Aut{\mathrm{Aut}}
\theoremstyle{plain}
\newtheorem{theorem}{Th\'eor\`eme}[section]
\newtheorem{proposition}[theorem]{Proposition}
\newtheorem{lemma}[theorem]{Lemme}
\newtheorem{corollary}[theorem]{Corollaire}
\theoremstyle{remark}
\newtheorem{definition}[theorem]{Définition}}
\title{R\'{e}alisations galoisiennes explicites de certaines familles de $2$-groupes}
\author{Angelot BEHAJAINA}
\email{angelot.behajaina@unicaen.fr}
\address{Laboratoire de Mathématiques Nicolas Oresme, Université de Caen Normandie, BP 5186, 14032 Caen Cedex, France}
\newenvironment{poliabstract}[1]
{\begin{abstract}}
{\end{abstract}}
\begin{document}

\maketitle

\begin{abstract}
In this paper, we construct, for some $2$-groups $G$, explicit Galois extensions $E/\Q(T)$ of group $G$ with $E \cap \oQ=\Q$. We also provide explicit arithmetic progressions of integers $t_{0}$ such that the specialization $E_{t_0}/\Q$ of $E/\Q(T)$ at $t_0$ has Galois group $G$.
\end{abstract}

\begin{poliabstract}{R\'esum\'e}
Dans cet article, nous construisons, pour certains $2$-groupes $G$, des extensions galoisiennes explicites $E/\Q(T)$ de groupe $G$ vérifiant $E \cap \oQ=\Q$. Nous fournissons aussi des progressions arithmétiques explicites d'entiers $t_0$ telles que la spécialisation $E_{t_0}/\Q$ de $E/\Q(T)$ en $t_0$ soit de groupe $G$. 
\end{poliabstract}

\section{Introduction} \label{sec:intro}
Le {\it problème inverse de Galois} consiste à savoir si tout groupe fini se réalise comme le groupe de Galois d'une extension galoisienne $F/\Q$. Bien que remontant à Hilbert et Noether, et malgré de nombreux travaux, ce célèbre problème de la théorie des nombres demeure largement ouvert. L'approche géométrique à ce problème consiste à montrer que tout groupe fini $G$ est {\it{groupe de Galois régulier sur $\Q$}}, c'est-à-dire à introduire une indéterminée $T$ et à construire une extension galoisienne $E/\Q(T)$ de groupe $G$ telle que $E/\Q$ soit {\it régulière} (c'est-à-dire $E \cap \oQ =\Q$). Le {\it théorème d'irréductibilité de Hilbert} fournit alors une infinité de nombres rationnels $t_0$ tels que la {\it spécialisation} $E_{t_0}/\Q$ de $E/\Q(T)$ en $t_0$ soit galoisienne de groupe $G$ (voir \S\ref{ssec:corps} pour la définition). Certains groupes finis, simples non abéliens notamment, ont été réalisés par cette méthode. On renvoie aux ouvrages de référence \cite{Ser92}, \cite{Vol96}, \cite{FJ08} et \cite{MM18} pour plus de détails.

Une version raffinée du problème inverse de Galois et de sa version régulière consiste, pour un groupe fini $G$ donné, à construire une extension galoisienne ou un polynôme explicite de groupe $G$. Par exemple, pour $G=\Z/2\Z$, on peut prendre $F/\Q=\Q(\sqrt{2})/\Q$. Un autre exemple est le trinôme $Y^n-Y-T$ $(n \geq 3$) dont le corps de décomposition $E$ sur $\Q(T)$ vérifie $\Gal(E/\Q(T))=S_{n}$ et $E \cap \oQ=\Q$ (voir \cite[\S4.4]{Ser92}). On renvoie aux dernières pages de \cite{MM18} pour d'autres réalisations explicites de certains groupes finis $G$, en général de petit cardinal.

Dans cet article, nous nous intéressons à certains $2$-groupes, plus précisément aux groupes non abéliens d'ordre $2^{n}$ et d'exposant $2^{n-1}$. Pour $n \geq 3$, il existe exactement quatre tels groupes (voir \cite[page 127]{JLY02}), à savoir le {\it groupe diédral $D_{2^{n}}$}, le {\it groupe quasi-diédral $QD_{2^n}$}, le {\it groupe modulaire $M_{2^n}$} et le {\it groupe des quaternions généralisés $Q_{2^{n}}$} (voir \S\ref{ssec:groupes} pour une présentation de chacun de ces groupes). Bien entendu, ces groupes étant résolubles, ils sont groupes de Galois sur $\Q$ par le théorème de Shafarevich (voir \cite[(9.6.1)]{NSW08}). Ces groupes sont en fait groupes de Galois réguliers sur $\Q$. En effet, il est connu que tout produit en couronnes $\Z/m\Z \wr \Z/h\Z$ est groupe de Galois d'une extension galoisienne $E/\Q(T_1, \dots, T_s)$ vérifiant $E \cap \overline{\Q}=\Q$ pour un certain $s \geq 1$, qui est en fait égal à $h$, et que tout produit semi-direct $\Z/m\Z \rtimes \Z/h\Z$ est quotient de $\Z/m\Z \wr \Z/h\Z$ (voir \cite[\S16.4]{FJ08}). Un argument de spécialisation (voir \cite[Proposition 13.2.1]{FJ08}), en général non explicite, permet alors de prendre $s=1$ (pour $h$ quelconque). Ceci s'applique en particulier aux 2-groupes ci-dessus puisque les trois premiers sont des produits semi-directs $\Z/2^{n-1}\Z\rtimes \Z/2\Z$ et le dernier est quotient d'un produit semi-direct $\Z/2^{n-1}\Z \rtimes \Z/4\Z$.

Par contre, l'existence de réalisations explicites de ces groupes est en général inconnue, notamment pour les groupes de quaternions généralisés (voir \cite[page 141]{JLY02}). Dans la suite, nous construisons de telles extensions en développant et en rendant explicite la méthode de \cite[\S16.4]{FJ08}. Nous montrons par exemple le théorème suivant (voir théorème \ref{quaternion}) :
\begin{theorem}\label{quaternions}
Soient $n \geq 3$ et $\xi=\exp( 2 \pi i/2^{n-1})$. Pour $l \in \llbracket 1, 2^{n-1}\rrbracket$ et $\ell \in \{1,2\}$, on pose 
\[
z_{\ell,l}=\sum_{j \in (\Z/2^{n-1}\Z)^{*}}\xi^{lj}\prod_{k \in (\Z/2^{n-1}\Z)^{*}} ( T+1+(-1)^{\ell}\sqrt{T^{2}+1}-\xi^{k} )^{r(j/k)/2^{n-1}},
\]
o\`{u} $r:(\Z/2^{n-1}\Z)^{*}\rightarrow \llbracket 0, 2^{n-1}-1 \rrbracket$ envoie $k \in (\Z/2^{n-1}\Z)^{*}$ sur son unique repr\'{e}sentant modulo $2^{n-1}$. Si l'on note
\[
w= \sqrt{T^{2}+1}+(\sqrt{T^2+1+T\sqrt{T^{2}+1}}+\sum_{l=1}^{2^{n-1}}z_{1,l}z_{2,l})^{2},
\] alors l'extension $\Q(T,w)/\Q(T)$
est galoisienne de groupe $Q_{2^{n}}$ et $\Q(T,w)/\Q$ est r\'{e}guli\`{e}re. De plus, les $\Q(T)$-conjugu\'{e}s de $w$ sont les 
\[(-1)^{a}\sqrt{T^{2}+1}+(\pm \sqrt{T^2+(-1)^{a}T\sqrt{T^{2}+1}}+\sum_{l=1}^{2^{n-1}}z_{2,l}z_{1,l+s})^{2}, \quad (a,s) \in \{ 0,1 \} \times \llbracket 0,2^{n-2}-1 \rrbracket. 
\]
\end{theorem}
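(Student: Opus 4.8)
The plan is to realise $Q_{2^{n}}$ as a quotient: write $G'=\Z/2^{n-1}\Z\rtimes\Z/4\Z$ with $\Z/4\Z$ acting by inversion through $\Z/4\Z\twoheadrightarrow\Z/2\Z$, and $N=\langle a^{2^{n-2}}c^{2}\rangle$ its central subgroup of order $2$, so that $G'/N\cong Q_{2^{n}}$. First I would simplify the base field. Put $v=\sqrt{T^{2}+1}$ and $s=v-T$; then $v+T=1/s$, the field $\Q(\xi)(T,v)=\Q(\xi)(s)$ is rational, $u_{\ell}:=T+1+(-1)^{\ell}v$ becomes $u_{1}=1-s$ and $u_{2}=1+s^{-1}$, and the nontrivial element of $\Gal(\Q(\xi)(s)/\Q(\xi)(T))$ is $\iota\colon s\mapsto -s^{-1}$, which interchanges $u_{1},u_{2}$ and sends $v\mapsto -v$. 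Over $\Q(\xi)(s)$ adjoin the $2^{n-1}$-th roots $\eta_{\ell,k}:=(u_{\ell}-\xi^{k})^{1/2^{n-1}}$. Since $u_{1}-\xi^{k}=-(s-(1-\xi^{k}))$ and $u_{2}-\xi^{k}=(1-\xi^{k})\,(s-(\xi^{k}-1)^{-1})/s$, these are, up to constants in $\Q(\xi)^{*}$ and the single common factor $s$, pairwise distinct monic linear polynomials in $s$; a divisor computation then shows the $u_{\ell}-\xi^{k}$ are multiplicatively independent modulo $2^{n-1}$-th powers, so by Kummer theory $\Gal\!\big(\Q(\xi)(s)(\eta_{\ell,k}\colon\ell,k)/\Q(\xi)(s)\big)\cong(\Z/2^{n-1}\Z)^{2^{n-1}}$.

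Next comes the descent. Fix $\eta_{1,1}$, set $\eta_{1,k}:=\sigma_{k}(\eta_{1,1})$ for $k\in(\Z/2^{n-1}\Z)^{*}$, where $\sigma_{b}\colon\xi\mapsto\xi^{b}$ is a suitable lift of the corresponding element of $\Gal(\Q(\xi)/\Q)$, and $\eta_{2,k}:=\iota(\eta_{1,k})$. The key computation — using $r(j/k)\equiv j\,r(1/k)\pmod{2^{n-1}}$ and reindexing $k\mapsto bk$ — gives $\sigma_{b}(\omega_{\ell,j})=\omega_{\ell,jb}$ and $\iota(\omega_{1,j})=\omega_{2,j}$, where $\omega_{\ell,j}:=\prod_{k}(u_{\ell}-\xi^{k})^{r(j/k)/2^{n-1}}$; hence each $z_{\ell,l}=\sum_{j}\xi^{lj}\omega_{\ell,j}$ is fixed by every $\sigma_{b}$ (this is the point of the Lagrange resolvent), while $\iota(z_{1,l})=z_{2,l}$. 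A short Fourier computation gives $P:=\sum_{l}z_{1,l}z_{2,l}=2^{n-1}\sum_{j}\omega_{1,j}\omega_{2,-j}$, an element of $\Q(\xi)(s)(\varpi)$ with $\varpi:=\omega_{1,1}/\omega_{2,1}$ and $\varpi^{2^{n-1}}=\prod_{k}\big((u_{1}-\xi^{k})/(u_{2}-\xi^{k})\big)^{r(1/k)}$ of order exactly $2^{n-1}$ modulo $2^{n-1}$-th powers; since the stabiliser of $P$ under $\varpi\mapsto\xi\varpi$ is trivial, $\Q(\xi)(s)(P)=\Q(\xi)(s)(\varpi)$ is cyclic of degree $2^{n-1}$ over $\Q(\xi)(s)$. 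By the $\sigma_{b}$-invariance of $P$ its descent $C:=\Q(T,v)(P)$ is cyclic of degree $2^{n-1}$ over $\Q(T,v)$ with $C\cap\oQ=\Q$; and since $\iota(\varpi)=\varpi^{-1}$ while $\iota(P)=P$, the field $C$ is $\iota$-stable, the generator of $\Gal(\Q(T,v)/\Q(T))$ acts on $\Gal(C/\Q(T,v))$ by inversion, and the extension class is trivial because $\varpi\cdot\iota(\varpi)=1$; thus $\Gal(C/\Q(T))\cong D_{2^{n}}$. (Equivalently, $\Q(T,v)(z_{1,l})$ and $\Q(T,v)(z_{2,l})$ realise the two $\Z/2^{n-1}\Z$-factors of $\Z/2^{n-1}\Z\wr\Z/2\Z$ over $\Q(T)$, and $C$ is the dihedral subextension cut out by $P$.)

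The quaternionic twist is supplied by $\delta:=\sqrt{v^{2}+Tv}=\sqrt{v(v+T)}$. Here $v^{2}+Tv=v/s\in\Q(T,v)$ is not a square there, and $\iota(v^{2}+Tv)=v^{2}-Tv=s^{2}(v^{2}+Tv)$; hence any lift of $\iota$ sends $\delta$ to $\pm s\delta$, whose square under $\iota$ is $-\delta$, so $\Q(T,v)(\delta)/\Q(T)$ is cyclic of degree $4$ — not biquadratic — and is regular over $\Q$. A valuation argument at the two places $s=\pm i$ (where $C/\Q(T,v)$ is unramified but $\Q(T,v)(\delta)/\Q(T,v)$ is ramified) shows $C\cap\Q(T,v)(\delta)=\Q(T,v)$, so $\Omega:=C(\delta)$ has degree $4\cdot 2^{n-1}$ over $\Q(T)$, is Galois, and is regular over $\Q$. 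Writing $\tau$ for a generator of $\Gal(C/\Q(T,v))$ (extended to $\Omega$ fixing $\delta$), $\kappa$ for the element of $\Gal(\Omega/\Q(T,v))$ fixing $C$ and sending $\delta\mapsto-\delta$, and $c$ for a lift of $\iota$ to $\Omega$, one checks $c^{2}=\kappa$ (from the behaviour of $c$ on $\delta$ together with $c^{2}|_{C}=\mathrm{id}$), $\kappa$ central, $c\tau c^{-1}=\tau^{-1}$, and $\langle\tau\rangle\cap\langle c\rangle=1$; therefore $\Gal(\Omega/\Q(T))=\langle\tau\rangle\rtimes\langle c\rangle\cong G'$. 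Then $N:=\langle\tau^{2^{n-2}}\kappa\rangle$ is central of order $2$, so $\Omega^{N}/\Q(T)$ is Galois with group $G'/N\cong Q_{2^{n}}$ and $\Omega^{N}\cap\oQ=\Q$.

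Finally I would show $w=v+(\delta+P)^{2}$ generates $\Omega^{N}$ over $\Q(T)$. Because $\xi^{2^{n-2}}=-1$ and every $j\in(\Z/2^{n-1}\Z)^{*}$ is odd, $\tau^{2^{n-2}}(P)=-P$; as $\tau^{2^{n-2}}\kappa$ also fixes $v$ and sends $\delta\mapsto-\delta$, it fixes $w$, so $w\in\Omega^{N}$. Applying the elements $\tau^{i}c^{\varepsilon}$ ($0\le i<2^{n-1}$, $0\le\varepsilon<4$) and using $\tau^{i}(P)=\sum_{l}z_{2,l}z_{1,l+i}$, $c(P)=P$, $c(v)=-v$, $c(\delta)=\pm\sqrt{v^{2}-Tv}$, $\tau^{2^{n-2}+i}(P)=-\tau^{i}(P)$ and $\kappa=c^{2}$, one finds that the images of $w$ collapse in pairs onto exactly the list in the statement, the parameter $a$ recording whether $v$ or $-v$ occurs, $s\in\llbracket 0,2^{n-2}-1\rrbracket$ recording $i$ modulo $2^{n-2}$, and the sign recording the two halves of $\langle\tau\rangle$. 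Since $\delta\notin C$ and the $\tau^{i}(P)$ are pairwise distinct, these $2^{n}$ values are distinct, so $w$ has exactly $2^{n}$ conjugates and $\Q(T,w)=\Omega^{N}$. The main obstacle is the content of the third paragraph: that $\sqrt{v^{2}+Tv}$ generates a \emph{cyclic} quartic extension of $\Q(T)$, linearly disjoint from $C$, so that the lift of $\iota$ acquires order $4$ and $G'/N$ is the \emph{non-split} (quaternionic) extension $Q_{2^{n}}$ rather than the dihedral one — this is exactly where the nested radical $\sqrt{T^{2}+1+T\sqrt{T^{2}+1}}$ is needed, through the identity $\iota(v^{2}+Tv)=s^{2}(v^{2}+Tv)$. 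A secondary difficulty is organising, in the second paragraph, the simultaneous choices of $2^{n-1}$-th roots compatible with both $\Gal(\Q(\xi)/\Q)$ and $\iota$, so that all the Galois actions reduce to the clean index-shifts used above.
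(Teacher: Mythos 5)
Your overall route is the paper's own: you realise $D_{2^n}$ regularly via Lagrange resolvents of Kummer radicals over $L=\Q(T)(\sqrt{T^2+1})$ (the paper's Théorème \ref{princ} and Corollaire \ref{princ1}), adjoin the cyclic quartic extension generated by $\sqrt{T^2+1+T\sqrt{T^2+1}}$, obtain the fibre product $\Delta_n=\Z/2^{n-1}\Z\rtimes\Z/4\Z$ (Propositions \ref{propgen}, \ref{regDelta}, \ref{Deltann}), and pass to the quotient by the central subgroup $\langle \tau^{2^{n-2}}c^2\rangle$ to get $Q_{2^n}$ with fixed field $\Q(T,w)$ (Théorème \ref{quaternion}). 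Your local variants (rationalising via $s=\sqrt{T^2+1}-T$, the ramification argument at $s=\pm i$ for $C\cap H=L$ instead of the paper's remark that $\Z/4\Z$ is not a quotient of $D_{2^n}$, the direct check that $c^2(\delta)=-\delta$ forces the quartic to be cyclic) are fine, and the root-choice compatibility you flag as a ``secondary difficulty'' is exactly what the paper settles in Lemmes \ref{unicycl}, \ref{lemcrt} and in the subgroup $\Omega$ of the proof of la Proposition \ref{thcycl}.

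There is, however, a genuine gap at the last step, precisely where the paper does its main computation. You claim the $2^n$ values $(-1)^a\sqrt{T^2+1}+(\pm\mu_a+v_\delta)^2$ are pairwise distinct ``since $\delta\notin C$ and the $\tau^i(P)$ are pairwise distinct''. That reason only handles coincidences with the same sign of $\sqrt{T^2+1}$. In the mixed case it breaks down, because the conjugate radical is \emph{not} independent of $\delta$ over $C$: one has $\delta\cdot\sqrt{T^2+1-T\sqrt{T^2+1}}=\pm\sqrt{T^2+1}$, i.e. $\sqrt{T^2+1-T\sqrt{T^2+1}}=\pm(\sqrt{T^2+1}-T)\,\delta\in L\delta$. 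So an equality between a ``$+\sqrt{T^2+1}$'' conjugate and a ``$-\sqrt{T^2+1}$'' conjugate does not contradict $\delta\notin C$; applying $\kappa=c^2$ and taking the sum and difference of the two equations, it reduces to the simultaneous conditions $v_\delta=\pm(\sqrt{T^2+1}-T)\,v_t$ and $(v_\delta-v_t)(v_\delta-v_{t+2^{n-2}})=\pm2(T+1)\sqrt{T^2+1}$, neither of which is excluded by the mere pairwise distinctness of the $v_\delta$. The paper kills this case by a valuation estimate at a prime of $\oQ M_1M_2$ above $T+1-\sqrt{T^2+1}-\xi$: by \eqref{valsui} the left-hand side has strictly positive valuation, while $2(T+1)\sqrt{T^2+1}$ is a unit there by le lemme \ref{ramggg}. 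Your proposal contains no argument of this kind, and without it you only get $[\Q(T,w):\Q(T)]\le 2^n$, i.e. $\Q(T,w)$ could a priori be a proper subfield of the $Q_{2^n}$-extension. The leading-term/valuation technique you already use for the Kummer independence would close the gap, but it must actually be carried out for this mixed case.
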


\noindent
Nous donnons aussi des analogues pour $D_{2^n}$, $QD_{2^n}$ et $M_{2^n}$ (voir corollaires \ref{princ1}, \ref{princ2} et \ref{princ3}).

Nous construisons en fait une réalisation régulière explicite de n'importe quel produit semi-direct $\Z/m\Z \rtimes \Z/2\Z$ ($m \geq 3$), ce qui généralise nos résultats sur $D_{2^n}$, $QD_{2^n}$ et $M_{2^n}$ (voir théorème \ref{princ}). Ce résultat plus général nous permet aussi de construire une réalisation régulière explicite du groupe diédral $D_{2m}$ à $2m$ éléments ($m \geq 3$), voir corollaire \ref{princ1}. Ceci fournit une variante régulière, valable pour tout $m$, d'une construction de Martinais et Schneps (voir \cite{MS92}). 

Notons toutefois que notre méthode diffère de celle de \cite[\S16.4]{FJ08} puisque nous cons\-truisons "directement" des extensions galoisiennes $E/\Q(T)$ de groupe $\Z/m\Z \rtimes \Z/2\Z$, c'est-à-dire notre méthode ne nécessite pas l'argument de spécialisation nécessaire pour passer de $\Q(T_1, T_2)$ à $\Q(T)$ rappelé plus haut. De plus, la stratégie de la preuve du théorème \ref{quaternions} ne consiste pas à réaliser explicitement $\Z/2^{n-1}\Z \wr \Z/4\Z$, puis ses quotients $\Z/2^{n-1}\Z \rtimes \Z/4\Z$, puis $Q_{2^n}$. Elle consiste plutôt à remarquer que $Q_{2^n}$ est quotient d'un produit semi-direct $\Z/2^{n-1}\Z \rtimes \Z/4\Z$ qui est en fait un produit fibré de $\Z/4\Z$ et de $D_{2^n}$. Nous utilisons alors la réalisation régulière explicite de $D_{2^n}$ obtenue dans le corollaire \ref{princ1} pour réaliser explicitement ce produit fibré, et donc $Q_{2^n}$.

Nous donnons ensuite des progressions arithmétiques explicites d'entiers $t_0$ tels que la spécialisation de $E/\Q(T)$ en $t_0$ soit galoisienne de groupe $Q_{2^n}$, où $E/\Q(T)$ désigne l'extension de groupe $Q_{2^n}$ construite dans le théorème \ref{quaternions}. L'existence de progressions arithmétiques d'entiers satisfaisant à la propriété de spécialisation de Hilbert a été étudiée par de nombreux auteurs, par exemple, par Davenport--Lewis--Schinzel (voir \cite{Sch00}), Fried (voir \cite{Fri74}), Dèbes--Ghazi (voir \cite{DG12}), Dèbes--Legrand (voir \cite{DL13}) et Legrand (voir \cite{Leg16}). Nous explicitons la méthode de \cite{Leg16}, qui repose sur l'inertie des spécialisations, et obtenons le théorème suivant : 

\begin{theorem}\label{thspecarith}
Soit $n \geq 3$. Soient $p$ et $q$ deux nombres premiers distincts supérieurs ou égaux à $7^{2^{n-2}}+1$ tels que $p \equiv 1 \pmod {2^{n-1}}$ et $q \equiv 1 \pmod 4$. Alors il existe un $t_{0} \in \llbracket 0, p^{2}q^{2}-1\rrbracket$ explicite tel que, si $t$ désigne n'importe quel entier positif vérifiant $t \equiv t_{0} \pmod{p^{2}q^{2}}$, alors la spécialisation $E_{t}/\Q$ de $E/\Q(T)$ en $t$ est galoisienne de groupe $Q_{2^{n}}$.
\end{theorem}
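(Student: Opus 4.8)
Le plan est de suivre la méthode de \cite{Leg16}, qui détermine le groupe de Galois d'une spécialisation à partir de ses groupes d'inertie aux premiers de ramification. Comme $E/\Q(T)$ est régulière de groupe $Q_{2^{n}}$, pour tout entier $t_{0}$ qui n'est pas un point de branchement, la spécialisation $E_{t_{0}}/\Q$ est galoisienne de groupe un sous-groupe de $Q_{2^{n}}$, lequel contient le groupe d'inertie $I_{\ell}\subseteq\Gal(E_{t_{0}}/\Q)$ en chaque premier $\ell$ où $E_{t_{0}}/\Q$ se ramifie. Il suffit donc de choisir $t_{0}$ de sorte que $\langle I_{p},I_{q}\rangle=Q_{2^{n}}$ pour les deux premiers $p$ et $q$ de l'énoncé ; un tel choix ne dépendant que de $t_{0}$ modulo $p^{2}q^{2}$, la conclusion s'étendra automatiquement à tout entier $t\equiv t_{0}\pmod{p^{2}q^{2}}$.

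La première étape est de lire, sur la formule du théorème \ref{quaternions} et sur la construction sous-jacente (réalisation explicite de $D_{2^{n}}$ du corollaire \ref{princ1}, puis produit fibré avec $\Z/4\Z$), le lieu de branchement de $E/\Q(T)$ et la classe de conjugaison d'un générateur d'inertie en chaque point de branchement. Ceux-ci sont essentiellement $\pm i$ (venant de $\sqrt{T^{2}+1}$ et de $\sqrt{T^{2}+1+T\sqrt{T^{2}+1}}$), les $2^{n-2}$ nombres $t=\xi^{k}(2-\xi^{k})/(2(1-\xi^{k}))$ avec $k\in(\Z/2^{n-1}\Z)^{*}$ (venant des facteurs $T+1+(-1)^{\ell}\sqrt{T^{2}+1}-\xi^{k}$ des $z_{\ell,l}$), éventuellement $\infty$, et leurs $\Q$-conjugués. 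Les exposants de dénominateur $2^{n-1}$ montrent qu'au-dessus de $t_{1}:=\xi(2-\xi)/(2(1-\xi))$ l'inertie est cyclique d'ordre $2^{n-1}$, engendrée par un générateur $x$ du sous-groupe cyclique d'indice $2$ de $Q_{2^{n}}$ ; et l'étude locale en $T=i$ de la tour $\Q(T)\subset\Q(T,\sqrt{T^{2}+1})\subset\Q(T,\sqrt{T^{2}+1+T\sqrt{T^{2}+1}})$, totalement ramifiée au-dessus de $T=i$, montre que l'inertie au-dessus de $t_{2}:=i$ est cyclique d'ordre $4$, engendrée par un élément $y$ d'ordre $4$. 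Comme $y\notin\langle x\rangle$ et que $\langle x\rangle$ est d'indice $2$ dans $Q_{2^{n}}$, les sous-groupes $\langle x\rangle$ et $\langle y\rangle$ engendrent $Q_{2^{n}}$.

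La deuxième étape combine le comportement de $p$ et $q$ dans les corps cyclotomiques avec le critère d'inertie de \cite{Leg16}. L'hypothèse $p\equiv1\pmod{2^{n-1}}$ (resp.\ $q\equiv1\pmod4$) entraîne que $p$ est totalement décomposé dans $\Q(\xi)$, donc dans son sous-corps $\Q(t_{1})$ (resp.\ $q$ dans $\Q(i)=\Q(t_{2})$) ; ainsi $t_{1}$ et ses conjugués, ainsi que $t_{2}$, ont des réductions deux à deux distinctes et distinctes des autres points de branchement dans $\mathbb{F}_{p}$ (resp.\ $\mathbb{F}_{q}$) dès que $p$, $q$ dépassent les résultants et discriminants attachés aux polynômes minimaux des points de branchement et à un modèle entier de $E/\Q(T)$ tiré de la formule du théorème \ref{quaternions} ; une majoration directe des valeurs absolues archimédiennes des numérateurs et dénominateurs en jeu donne que $p,q\geq 7^{2^{n-2}}+1$ suffit. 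On choisit alors $t_{0}$ modulo $p^{2}$ pour que $t_{0}$ soit congru à $t_{1}$ modulo le premier de $\Q(t_{1})$ au-dessus de $p$, mais pas modulo son carré (ce qui force la valuation de $t_{0}-t_{1}$ à valoir exactement $1$), et de même modulo $q^{2}$ vis-à-vis de $t_{2}$ ; le théorème des restes chinois fournit un $t_{0}\in\llbracket 0,p^{2}q^{2}-1\rrbracket$ explicite. Les racines $2^{n-1}$-ièmes (resp.\ $4$-ièmes) de l'unité étant dans $\mathbb{F}_{p}$ (resp.\ $\mathbb{F}_{q}$), le critère d'inertie donne que $I_{p}$ est un conjugué de $\langle x\rangle$, donc égal à $\langle x\rangle$ puisque $\langle x\rangle$ est distingué dans $Q_{2^{n}}$, et que $I_{q}$ est un conjugué de $\langle y\rangle$, en particulier non contenu dans $\langle x\rangle$ ; d'où $\langle I_{p},I_{q}\rangle=\langle\langle x\rangle,I_{q}\rangle=Q_{2^{n}}$ et $\Gal(E_{t_{0}}/\Q)=Q_{2^{n}}$. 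Le même argument ne faisant intervenir que la classe de $t$ modulo $p^{2}q^{2}$, on conclut pour tout entier $t\equiv t_{0}\pmod{p^{2}q^{2}}$.

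La principale difficulté sera le travail des deux premières étapes : identifier précisément, dans l'extension à plusieurs étages sous-jacente, le lieu de branchement et les classes d'inertie --- en particulier localiser un point de branchement d'inertie d'ordre $2^{n-1}$ et un d'inertie d'ordre $4$ qui l'engendrent avec le premier ---, puis convertir les conditions de bonne réduction (premiers non ramifiés, réductions distinctes, bonne réduction du modèle en $p$ et $q$) en la borne numérique explicite $7^{2^{n-2}}+1$. Ceci acquis, la conclusion se ramène à une application routinière du critère d'inertie de \cite{Leg16}, du comportement de décomposition de $p$ et $q$ dans les corps cyclotomiques, et du théorème des restes chinois.
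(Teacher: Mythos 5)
Your overall strategy is the same as the paper's: you follow Legrand's specialization--inertia method, make $t$ meet a branch point of type $s_k$ modulo $p$ and the branch point $i$ modulo $q$ with valuation exactly $1$, assemble $t_0$ by the Chinese remainder theorem modulo $p^2q^2$, and derive the bound $7^{2^{n-2}}+1$ from norm/resultant estimates guaranteeing that $p,q$ are good primes; the conclusion then follows because an element of order $2^{n-1}$ in $\langle\Lambda\rangle$ together with an order-$4$ element outside $\langle\Lambda\rangle$ generate $Q_{2^n}$. Where you genuinely diverge is in how the canonical inertia classes are identified. You propose a direct local computation: Eisenstein/Abhyankar considerations at the $s_k$ giving inertia of order $2^{n-1}$ generating the index-$2$ cyclic subgroup, and total ramification of the $\Z/4\Z$-subtower at $T=i$ giving an order-$4$ generator outside $\langle\Lambda\rangle$. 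These claims are in fact correct, but note that this is precisely the part you defer, and it is not a formality: $\Gamma$ contains neither $H$ nor the fields $L(\xi,x_{\ell,k})$, so one must check that the order-$2^{n-1}$ ramification at $s_k$ survives in the subfield $E=L(v_0)$ of $N_1N_2$ and that the order-$4$ inertia at $i$ in $HE$ still has order-$4$ image in $Q_{2^n}=\Gal(\Gamma/\Q(T))$ (its square $\sigma^2$ must not lie in the kernel $\langle\rho^{2^{n-2}}\sigma^2\rangle$). The paper sidesteps this descent entirely: its Lemme 4.7 proves a weaker but sufficient statement (the class at $i$ is $\mathcal{B}$ or $\mathcal{C}$, and \emph{some} $s_k$ has class $\mathcal{A}_j$ with $j$ odd) by a purely group-theoretic parity argument combining the Branch Cycle Lemma with Riemann's existence theorem and the product-one relation, ruling out the alternative $Q_{2^n}=\langle\Lambda^2,\Sigma\rangle$ or $\langle\Lambda^2,\Lambda\Sigma\rangle$. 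Your route buys a sharper description of the inertia (all $s_k$, explicit generators) at the cost of a careful ramification analysis through the tower; the paper's route is cheaper but less explicit. Two smaller points: the paper shows (Lemme 4.5) that \emph{no} odd prime is vertically ramified, so "good reduction of an integral model" does not need to be absorbed into the size bound as you suggest --- the bound $7^{2^{n-2}}+1$ comes only from the norms controlling when two branch points meet modulo $p$ (and it automatically gives $p\nmid 2^{2^{n-2}}+1$); and to get the full inertia group rather than a proper subgroup you also need an analogue of \cite[Lemma 3.12]{Leg16} to adjust $t_{0,p},t_{0,q}$ so that the valuations are exactly $1$, which the paper makes explicit.
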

\noindent
Nous renvoyons au théorème \ref{thm:spec_arith} pour un énoncé plus général où l'on explique comment cons\-truire un tel $t_0$. Par ailleurs, il nous paraît plausible que des analogues peuvent être donnés pour les autres $2$-groupes considérés dans cet article. Nous laissons ce travail au lecteur intéressé.

L'article est organis\'{e} de la mani\`{e}re suivante. La section 2 est dédiée aux pr\'{e}liminaires. Dans la section 3, nous construisons des réalisations régulières explicites de n'importe quel produit semi-direct $\Z/m\Z \rtimes \Z/2\Z$ ($m \geq 3$). La section 4 est non seulement dédiée à la même tâche pour les groupes de quaternions généralisés mais aussi à la construction de réalisations explicites de ces groupes sur $\Q$ par spécialisation.

{\bf Remerciements.---} Je tiens à remercier mes directeurs de thèse, Bruno Deschamps et François Legrand, pour leurs nombreuses relectures, commentaires utiles et précieuses suggestions. Je remercie également le GDRI GANDA pour son soutien financier et Denis Simon pour certaines discussions lors de la préparation de ce travail.

\section{Pr\'{e}liminaires}\label{prel}

\subsection{Extensions de corps de fonctions} \label{ssec:corps}

Etant donnés un groupe fini $G$ et un corps $K$, une {\it $G$-extension} de $K$ est une extension galoisienne $F/K$ de groupe $G$. Si $F/K$ et $M/L$ désignent deux extensions galoisiennes finies telles que $K \subset L$ et $F \subset M$, l'application de restriction $\Gal(M/L) \rightarrow \Gal(F/K)$ sera notée $\res^{M/L}_{F/K}$.

Etant donnée une indéterminée $T$, on dit qu'une extension finie galoisienne $E/\Q(T)$ est {\it$\Q$-régulière} si $E/\Q$ est régulière, c'est-à-dire si $E \cap \overline{\Q}=\Q$. On dit que $t_0 \in \mathbb{P}^1(\overline{\Q})$ est un {\it point de branchement} de $E/\Q(T)$ si l'idéal $\langle T-t_0 \rangle$  est ramifié dans la clôture intégrale de $\overline{\Q}[T-t_0]$ dans $E\overline{\Q}$ (si $t_0=\infty$, $T-t_0$ doit être remplacé par $1/T$). 
 
On suppose maintenant que $E/\Q(T)$ est une $G$-extension $\Q$-régulière de points de branchement $t_{1},\dots, t_{r}$. Etant donné $t_{0} \in \mathbb{P}^{1}(\Q)\setminus \{t_1, \dots, t_r \}$, la {\it spécialisation} de $E/\Q(T)$ en $t_0$, notée $E_{t_0}/\Q$, est l'extension résiduelle en un idéal premier $\mathcal{P}$ au dessus de $\langle T-t_0 \rangle$. Comme $E/\Q(T)$ est galoisienne, l'extension $E_{t_0}/\Q$ ne dépend pas du choix de l'idéal premier $\mathcal{P}$. De plus, $E_{t_0}/\Q$ est galoisienne et son groupe de Galois est le groupe de décomposition de $E/\Q(T)$ en $\mathcal{P}$.

Etant donn\'{e} un nombre premier $p$, on pose $1/\infty = 0$, $1 / 0 = \infty$, $v(\infty) = -\infty$ et $v(0) = \infty$, où $v$ désigne la valuation $p$-adique. Soit $\Z_{p\Z}$ le localisé de $\Z$ en $p$. Pour chaque $t \in \mathbb{P}^{1}(\overline{\Q})$, on note $m_{t}(X) \in \Q[X]$ le polynôme minimal de $t$ sur $\Q$, avec la convention $m_{\infty}(X)=1$. 
\begin{definition} \label{rencontre} 
Soient $t_0$ et $t_1$ dans $\mathbb{P}^1(\overline{\Q})$. On dit que {\it $t_0$ et $t_1$ se rencontrent modulo $p$} s'il existe un corps de nombres $F$ et un idéal premier de $F$ au dessus de $p$ de valuation associée $w$ tels que $t_0$, $t_1$ $\in \mathbb{P}^1(F)$ et tels que l'une des conditions suivantes soit vérifiée : 
\vskip 0.5mm
\noindent
1) $w(t_0) \geq 0$, $w(t_1) \geq 0$ et $w(t_0-t_1) > 0$,
\vskip 0.5mm
\noindent
2) $w(t_0) \leq 0$, $w(t_1) \leq 0$ et $w((1/t_0) - (1/t_1)) > 0$.
\end{definition}

Le lemme suivant nous sera utile par la suite.

\begin{lemma}\label{normren}
Soient $t_0$ et $t_1$ dans $\overline{\Q}$ entiers sur $\Z_{p\Z}$. Si $t_0$ et $t_1$ se rencontrent modulo $p$ et si $N(t_0-t_1)$ désigne la norme de $t_0-t_1$ dans une extension finie galoisienne $F/\Q$ donnée telle que $t_0, t_1 \in F$, alors $v(N(t_0-t_1)) >0$.
\end{lemma}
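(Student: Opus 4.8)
The plan is to exhibit a single valuation $w'$ of $F$ lying above $p$ at which $t_0$ and $t_1$ agree, and then to observe that the norm, being a product of elements integral over $\Z_{p\Z}$ one factor of which has positive $w'$-value, itself has positive $w'$-value, and hence positive $v$-value.

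I first record two elementary remarks. Since $t_0$ and $t_1$ are integral over $\Z_{p\Z}$, so is $t_0-t_1$, and hence so is each of its $\Gal(F/\Q)$-conjugates (they satisfy the same monic polynomial with coefficients in $\Z_{p\Z}$); consequently $N(t_0-t_1)=\prod_{\sigma\in\Gal(F/\Q)}\sigma(t_0-t_1)$ is integral over $\Z_{p\Z}$ and lies in $\Q$, hence in $\Z_{p\Z}$, so that $v(N(t_0-t_1))\geq 0$ to begin with (the case $t_0=t_1$ being covered by the convention $v(0)=\infty$). Secondly, for any valuation $w$ above $p$ of any number field, an element of that field integral over $\Z_{p\Z}$ has $w$-value $\geq 0$.

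Next I unwind the hypothesis that $t_0$ and $t_1$ meet modulo $p$. By Definition~\ref{rencontre} there exist a number field $F_1$ and a valuation $w$ of $F_1$ above $p$ with $t_0,t_1\in\mathbb{P}^1(F_1)$ and satisfying condition~1) or~2). In case~1) one has directly $w(t_0-t_1)>0$. In case~2), the inequalities $w(t_0)\leq 0$ and $w(t_1)\leq 0$ combine with $w(t_0),w(t_1)\geq 0$ (integrality) to force $w(t_0)=w(t_1)=0$; in particular $t_0$ and $t_1$ are nonzero, and $w\big((1/t_0)-(1/t_1)\big)=w(t_1-t_0)-w(t_0)-w(t_1)=w(t_0-t_1)>0$. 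So in both cases $w(t_0-t_1)>0$. I then extend $w$ to a valuation $\overline{w}$ of $\overline{\Q}$ above $p$ and set $w'=\overline{w}|_{F}$, a valuation of $F$ above $p$; since $t_0-t_1$ lies in $F_1$ and in $F$, and positivity of valuations is preserved under extension and restriction, $w'(t_0-t_1)>0$.

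Finally, in $N(t_0-t_1)=\prod_{\sigma\in\Gal(F/\Q)}\sigma(t_0-t_1)$ every factor is integral over $\Z_{p\Z}$, hence has $w'$-value $\geq 0$, while the factor corresponding to $\sigma=\mathrm{id}$ has $w'$-value $>0$; therefore $w'(N(t_0-t_1))>0$. Since $N(t_0-t_1)\in\Q$ and a nonzero rational has positive $w'$-value precisely when it has positive $v$-value, we conclude $v(N(t_0-t_1))>0$. The only steps needing any care are the passage from the auxiliary field $F_1$ of Definition~\ref{rencontre} to the given field $F$ through $\overline{\Q}$, and the reduction of case~2) to case~1) via integrality; everything else is routine.
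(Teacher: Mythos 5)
Your proof is correct and follows essentially the same route as the paper: pick a valuation of $F$ above $p$ at which $t_0-t_1$ has positive value, write $N(t_0-t_1)$ as the product of Galois conjugates, and use integrality over $\Z_{p\Z}$ to see every factor has nonnegative value while the identity factor has positive value. The extra care you take — reducing case~2) of the definition to case~1) via $w(t_0)=w(t_1)=0$ and transporting the valuation from the auxiliary field of Définition~\ref{rencontre} to $F$ through $\overline{\Q}$ — is exactly the detail the paper's one-line proof leaves implicit, so this is a fleshed-out version of the same argument rather than a different one.
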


\begin{proof}[Preuve]
On se donne un idéal premier de $F$ au dessus de $p$ de valuation associée $w$ tel que $w(t_0-t_1) > 0$. On a $w(N(t_0-t_1))=\sum_{\sigma \in \Gal(F/\Q)}w(\sigma(t_0-t_1)) \geq w(t_0-t_1) >0$.
\end{proof}

\begin{definition}\label{bonpremiers} On dit que $p$ est un \textit{mauvais premier} (et un \textit{bon premier} sinon) de l'extension $E/\Q(T)$ si l'une des conditions suivantes est vérifiée :
\vskip 0.5mm
\noindent
1) $|G| \in p\Z$,
\vskip 0.5mm
\noindent
2) deux points de branchement distincts de $E/\Q(T)$ se rencontrent modulo $p$,
\vskip 0.5mm
\noindent
3) $p$ est \textit{verticalement ramifié} dans $E/\Q(T)$, c'est-à-dire l'idéal $p\Z[T]$ se ramifie dans la clôture intégrale de $\Z[T]$ dans $E$,
\vskip 0.5mm
\noindent
4) $p$ se ramifie dans $\Q(t_{1},\dots,t_{r})/\Q$, où $t_{1}, \dots, t_{r}$ sont les points de branchement de $E/\Q(T)$. 
\end{definition}

A chaque point de branchement $t_i$ est associée une classe de conjugaison $C_i$ de $G$, appelée {\it classe canonique de l'inertie}. En effet, les groupes d'inertie de $E\overline{\Q}/\overline{\Q}(T)$ en $t_i$ sont des groupes cycliques deux à deux conjugués et d'ordre égal à l'indice de ramification $e_{i}$. De plus, chacun d'eux admet un générateur distingué correspondant à l'automorphisme $(T-t_{i})^{1/e_{i}} \mapsto \mathrm{exp}(2 \pi i /e_{i}) (T-t_{i})^{1/e_{i}}$ de $\overline{\Q}(((T-t_{i})^{1/e_{i}}))$ (on remplace $T-t_{i}$ par $1/T$ si $t_{i}=\infty$). Alors $C_i$ est la classe de conjugaison de tous les générateurs distingués des groupes d'inertie en $t_{i}$. Le $r$-uplet non ordonné $(C_1,\dots,C_r)$ est appelé {\it invariant canonique de l'inertie} de $E/\Q(T)$. Pour $i \in \{ 1, \dots, r \}$, notons $g_{i}$ le générateur distingué d'un certain groupe d'inertie de $E\overline{\Q}/\overline{\Q}(T)$ en $t_{i}$.  

Nous renvoyons à \cite[Proposition 4.2]{Bec91} et \cite[\S 2.2.3]{Leg16} pour le théorème suivant.
\begin{theorem}\label{spinth} Soit $t_{0} \in \mathbb{P}^{1}(\Q) \setminus \{t_{1}, \dots, t_{r}\}$. Fixons $j \in \llbracket 1, r \rrbracket$ tel que $t_0$ et $t_{j}$ se rencontrent modulo $p$. Supposons que $p$ soit un bon premier pour $E/\Q(T)$ et que $m_{t_j}(T)$ et $m_{1/t_{j}}(T)$ soient dans $\Z_{p\Z}[T]$. Alors le groupe d'inertie de $E_{t_0}/\Q$ en $p$ est conjugué dans $G$ à $\langle g_j^a \rangle$, où $a = v(m_{t_{j}}(t_0))$ (resp. $a=v(m_{1/t_{j}}(1/t_0))$) si $v(t_0)\geq 0$ (resp. $v(t_0)\leq 0$).   
\end{theorem}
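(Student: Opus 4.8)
The plan is to localise at $p$, use the hypothesis that $p$ is a good prime to produce a well-behaved integral model of the cover, and then read off the inertia at $p$ of the specialisation from the local structure of that model at the branch point $t_j$. After replacing $T$ by $1/T$ when $v(t_0)\leq 0$, we may assume $v(t_0)\geq 0$ and $t_j\neq\infty$, so that $a=v(m_{t_j}(t_0))$. That $p$ is a good prime ensures that the reduction at $p$ is tame ($p\nmid |G|$), that the branch points $t_1,\dots,t_r$ extend to sections of $\mathbb{P}^1_{\Z_{p\Z}}$ defined over an unramified extension and with pairwise distinct reductions, and that the cover does not degenerate vertically. By Beckmann's good reduction theorem (\cite[Proposition 4.2]{Bec91}) there is then a normal $G$-model $\mathcal{X}\to \mathbb{P}^1_{\Z_{p\Z}}$ of the cover, finite étale over the complement of the disjoint sections $t_1,\dots,t_r$, tamely ramified along them, and whose special fibre is a $G$-cover of $\mathbb{P}^1_{\mathbb{F}_p}$ with the same branch points and the same distinguished inertia generators $g_i$.

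Next I would localise $\mathcal{X}$ along the section $t_j$. Since $t_0$ and $t_j$ meet modulo $p$, they have the same reduction at the corresponding prime $w\mid p$ of $\Q(t_j)$; because $p$ is good, $w$ is the only prime over $p$ where this happens and $f(w/p)=1$ (two distinct $\Q$-conjugates of $t_j$ — hence two distinct branch points — with the same reduction as $t_0$ would meet modulo $p$), so that $v(m_{t_j}(t_0))=w(t_0-t_j)=a$. After the unramified base change $\Z_{p\Z}\to\mathcal{O}_w$ and completion along $T=t_j$, a connected factor of the cover has the Kummer form $s^{e_j}=\tau u$ with $\tau=T-t_j$ and $u$ a unit, with $g_j$ acting by $s\mapsto \exp(2\pi i/e_j)\,s$. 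Pulling back along the section $T\mapsto t_0$, i.e.\ substituting $\tau\mapsto t_0-t_j$ — legitimate since $w(t_0-t_j)=a>0$, so the relevant series converge — yields the $\mathcal{O}_w$-algebra $\mathcal{O}_w[s]/(s^{e_j}-p^{a}\varepsilon)$ with $\varepsilon$ a unit. Away from the point above the reduction of $t_j$ the model is étale over $\Z_{p\Z}$ and contributes only primes unramified over $p$ (using once more that $t_0$ meets no other branch point modulo $p$), so this algebra describes the completions of $E_{t_0}$ at all primes above $p$.

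It remains to compute the ramification of $\mathcal{O}_w[s]/(s^{e_j}-p^{a}\varepsilon)$ over $\Z_p$. As $p\nmid e_j$, each prime $\mathcal{P}$ above $p$ is tamely ramified with ramification index $e_j/\gcd(e_j,a)$ — the standard computation for $X^{e_j}-p^{a}\varepsilon$, equivalently for $\Q_p(\beta^{1/e_j})/\Q_p$ with $v(\beta)=a$. Hence the inertia group $I_{\mathcal{P}}$ of $E_{t_0}/\Q$ at $p$ is cyclic of order $e_j/\gcd(e_j,a)$. On the other hand $I_{\mathcal{P}}$ is contained in a conjugate of the inertia group $\langle g_j\rangle$ of the cover at the reduction of $t_j$, since the inertia of the pullback injects into the inertia of the cover at the point it meets; and $\langle g_j\rangle$ is cyclic of order $e_j$, whose unique subgroup of order $e_j/\gcd(e_j,a)$ is $\langle g_j^{\gcd(e_j,a)}\rangle=\langle g_j^{a}\rangle$. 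Therefore $I_{\mathcal{P}}$ is conjugate in $G$ to $\langle g_j^{a}\rangle$, which is the assertion, the case $v(t_0)\leq 0$ being identical after $T\mapsto 1/T$.

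The crux is the reduction-theoretic input of the first two paragraphs: one must know that the "good prime" hypotheses genuinely force a non-degenerate integral model, so that the sole source of ramification at $p$ in $E_{t_0}$ is the collision of the reductions of $t_0$ and $t_j$ — with no spurious contribution from bad reduction of the cover itself — and so that the distinguished generator $g_j$ of the characteristic-zero cover may be identified with its avatar for the reduced cover. This is exactly what \cite[Proposition 4.2]{Bec91} provides; granting it, the remaining steps are the local Kummer computation and the elementary bookkeeping identity $v(m_{t_j}(t_0))=w(t_0-t_j)$, both routine under the stated hypotheses.
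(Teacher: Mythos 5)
The paper does not actually prove Theorem \ref{spinth}: it quotes it directly from \cite[Proposition 4.2]{Bec91} and \cite[\S 2.2.3]{Leg16}, and your argument is a faithful reconstruction of exactly that standard proof — good (tame, non-degenerate) reduction of the cover at a good prime via Beckmann, the local Kummer computation along the branch section, and the bookkeeping identity $v(m_{t_j}(t_0))=w(t_0-t_j)$ justified by the condition that no two distinct branch points meet modulo $p$. So your proposal is correct and takes essentially the same route as the references the paper relies on.
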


\subsection{Sur les $2$-groupes} \label{ssec:groupes}
Dans cet article, le groupe cyclique d'ordre $m$ est noté $\Z/m\Z$ et considéré additivement.

Un produit semi-direct $\Z/m\Z \rtimes \Z/2\Z$ est déterminé par l'image $d$ de $1 \in \Z/2\Z$ dans $\Aut(\Z/m\Z)=(\Z/m\Z)^{*}$ (on a nécessairement $d^2=1$). Une présentation de ce groupe est
\begin{equation}\label{predih}
\Z/m\Z \rtimes \Z/2\Z = \langle r , s \mid r ^{m}=s^{2}=1, s r s^{-1} =r^{d} \rangle,
\end{equation} où  $r$ (resp. $s$) correspond à $(1,0)$ (resp. $(0,1)$).  Dans la suite, on considérera trois cas :
\begin{enumerate}[1)]
\item $m$ arbitraire et $d=-1$; dans ce cas, $\Z/m\Z \rtimes \Z/2\Z$ est le groupe diédral $D_{2m}$,
\item $m=2^{n-1}$ ($n \geq 3$) et $d=2^{n-2}-1$; dans ce cas, le produit semi-direct $\Z/2^{n-1}\Z \rtimes\Z/2\Z$ est le groupe quasi-diédral $QD_{2^n}$,
\item $m=2^{n-1}$ ($n \geq 3$) et $d=2^{n-2}+1$; dans ce cas, le produit semi-direct $\Z/2^{n-1}\Z \rtimes \Z/2\Z$ est le groupe modulaire $M_{2^n}$.
\end{enumerate}  

Etant donné $n \geq 3$, on définit le groupe $\Delta_{n}$ comme étant le produit semi-direct $\Z/2^{n-1}\Z \rtimes \Z/4\Z$ où l'image de $1 \in \Z/4\Z$ dans $\Aut(\Z/2^{n-1}\Z)$ est $-\mathrm{Id}$. Ce groupe admet comme présentation
\begin{equation}\label{presdelt}
\Delta_{n}= \langle \rho,{\sigma} \mid {\rho}^{2^{n-1}}={\sigma}^{4}=1,{\sigma} {\rho} {\sigma}^{-1}={\rho}^{-1} \rangle,
\end{equation} où ${\rho}$ (resp. ${\sigma}$) correspond à $(1,0)$ (resp. $(0,1)$). Le groupe des quaternions généralisés d'ordre $2^{n}$, noté  $Q_{2^{n}}$, est le quotient de $\Delta_{n}$ par le sous-groupe distingué $\langle(2^{n-2},2) \rangle$. La présentation (\ref{presdelt}) de $\Delta_{n}$ induit la présentation de $Q_{2^{n}}$ suivante :
\begin{equation}\label{presquat}
Q_{2^{n}}= \langle \Lambda, \Sigma \mid \Lambda^{2^{n-1}}=\Sigma^{4}=1,\Lambda^{2^{n-2}}=\Sigma^{2},\Sigma \Lambda \Sigma^{-1}=\Lambda^{-1} \rangle,
\end{equation} où $\Lambda$ (resp. $\Sigma$) est l'image de ${\rho}$ (resp. ${\sigma}$) modulo $\langle(2^{n-2},2) \rangle$. 

Pour $m \geq 2$, le produit en couronnes de $\Z/m\Z$ et $\Z/2\Z$, noté $\Z/m\Z \wr \Z/2\Z$, est le produit semi-direct $(\Z/m\Z \times \Z/m\Z) \rtimes \Z/2\Z$ où l'image de $1 \in \Z/2\Z$ dans $\Aut(\Z/m\Z \times \Z/m\Z)$ est l'automorphisme $(a,b) \mapsto (b,a)$. 

Dans ce texte, on utilise toujours les présentations ci-dessus.

La proposition suivante nous sera utile par la suite. 

\begin{proposition}\label{propgen}
Soient $n \geq 3$, $K$ un corps et $M/K$ une $D_{2^n}$-extension. Notons $L$ le sous-corps de $M$ fixé par ${r}$, où ${r}$ est défini dans la présentation \eqref{predih}.  Supposons que $L/K$ se plonge dans une $\Z/4\Z$-extension $H/K$ et soit $\tau$ un générateur de $\Gal(H/K)$. D'une part, $H/L$ et $M/L$ sont linéairement disjointes. D'autre part, $HM/K$ est une $\Delta_{n}$-extension. De plus, il existe un unique relèvement ${\rho}$ de ${r}$ dans $\Gal(HM/H	)$ et celui-ci vérifie ce qui suit : pour tout relèvement ${\sigma}$ de $\tau$ dans $\Delta_{n}$, les éléments ${\rho}$ et ${\sigma}$ vérifient la présentation \eqref{presdelt}.
\end{proposition}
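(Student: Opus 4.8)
The plan is to work entirely inside the Galois group of the compositum $HM/K$ and to reduce each of the three assertions to a computation in $D_{2^n}$ together with the order count $[HM:K]=2^{n+1}$.

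First I would pin down $H\cap M$. Since $[H:L]=2$ is prime, $H\cap M$ is either $L$ or $H$, and I claim the latter is impossible. If $H\subseteq M$, then $L\subseteq H\subseteq M$, so $\Gal(M/H)$ is an index-$4$ subgroup of $\Gal(M/K)=D_{2^n}$ contained in $\Gal(M/L)=\langle r\rangle$; being of index $2$ in the cyclic group $\langle r\rangle$, it must be $\langle r^2\rangle$, and hence $\Gal(H/K)\cong D_{2^n}/\langle r^2\rangle$. But reading off the presentation \eqref{predih}, every nontrivial element of $D_{2^n}/\langle r^2\rangle$ has order $2$ (the classes of $r$ and of the reflections $sr^j$ are all involutions), so $D_{2^n}/\langle r^2\rangle\cong(\Z/2\Z)^2$, which is not cyclic of order $4$, contradicting $\Gal(H/K)\cong\Z/4\Z$. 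Therefore $H\cap M=L$. Since $M/L$ is Galois, this already gives the linear disjointness of $H/L$ and $M/L$ over $L$, and it yields $[HM:K]=[H:K][M:K]/[L:K]=2^{n+1}$, so $|\Gal(HM/K)|=2^{n+1}=|\Delta_n|$.

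Next I would produce $\rho$. As $M/K$ is Galois, $HM/H$ is Galois and restriction is an isomorphism $\Gal(HM/H)\xrightarrow{\sim}\Gal(M/M\cap H)=\Gal(M/L)$; since $r$ fixes $L$ by definition it lies in $\Gal(M/L)$, so there is a unique $\rho\in\Gal(HM/H)$ with $\rho|_M=r$, of order $2^{n-1}$. Now fix any $\sigma\in\Gal(HM/K)$ with $\sigma|_H=\tau$. An element of $\Gal(HM/K)$ is determined by its restrictions to $H$ and to $M$, so to verify the relations of \eqref{presdelt} it is enough to restrict to each. On $H$: $\rho|_H=1$ and $\sigma|_H=\tau$ has order $4$. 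On $M$: $\rho|_M=r$, and $\sigma|_M\in D_{2^n}$ restricts on $L$ to $\tau|_L\neq 1$ (as $\tau\notin\Gal(H/L)$), hence $\sigma|_M\notin\langle r\rangle$, so $\sigma|_M$ is a reflection $sr^j$, an involution. From \eqref{predih} this gives $\rho^{2^{n-1}}=1$, $\sigma^4=1$ (with $\sigma^2\neq 1$ since $\tau^2\neq 1$, so $\sigma$ has order exactly $4$), and $\sigma\rho\sigma^{-1}=\rho^{-1}$ (trivial on $H$; equal to $(sr^j)r(sr^j)^{-1}=r^{-1}$ on $M$).

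It remains to check that $\rho$ and $\sigma$ generate $\Gal(HM/K)$. Since $\langle\rho\rangle$ is normalized by $\sigma$, the set $\langle\rho\rangle\langle\sigma\rangle$ is a subgroup; moreover $\langle\rho\rangle\subseteq\Gal(HM/H)$ while $\langle\sigma\rangle\cap\Gal(HM/H)=\{1\}$ (the only power $\sigma^k$ fixing $H$ is $\sigma^0$, because $\sigma|_H$ has order $4$), so $\langle\rho\rangle\cap\langle\sigma\rangle=\{1\}$ and $\langle\rho,\sigma\rangle=\langle\rho\rangle\langle\sigma\rangle$ has order $2^{n-1}\cdot 4=2^{n+1}=[HM:K]$. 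Hence the homomorphism $\Delta_n\to\Gal(HM/K)$ determined by \eqref{presdelt} (sending the two generators to $\rho$ and $\sigma$) is surjective, and is an isomorphism since $|\Delta_n|=2^{n+1}=|\Gal(HM/K)|$; this proves at once that $HM/K$ is a $\Delta_n$-extension and that $\rho,\sigma$ satisfy \eqref{presdelt}. The one genuinely load-bearing step is the first: the whole argument hinges on $H\not\subseteq M$, and the only real content there is the computation $D_{2^n}/\langle r^2\rangle\cong(\Z/2\Z)^2$ against the cyclicity of $H/K$. Once $H\cap M=L$ is secured, $\Gal(HM/K)$ is the expected fibre product of $\Z/4\Z$ and $D_{2^n}$ over $\Z/2\Z$, and everything else is bookkeeping.
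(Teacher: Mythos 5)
Your proof is correct and follows essentially the same route as the paper: everything hinges on $H\not\subseteq M$ (which you establish by computing $D_{2^n}/\langle r^2\rangle\cong(\Z/2\Z)^2$, where the paper simply invokes that $\Z/4\Z$ is not a quotient of $D_{2^n}$), after which the restriction isomorphism $\Gal(HM/H)\simeq\Gal(M/L)$ gives the unique $\rho$, the relations of \eqref{presdelt} are checked on the two factors, and the cardinality count $|\langle\rho,\sigma\rangle|=2^{n+1}=[HM:K]$ identifies $\Gal(HM/K)$ with $\Delta_n$. The only differences are bookkeeping: you verify the relations and the generation via the embedding of $\Gal(HM/K)$ into $\Gal(H/K)\times\Gal(M/K)$ and the product count $|\langle\rho\rangle\langle\sigma\rangle|$, whereas the paper argues through order estimates ($o(\sigma^2)\leq 2$ from $[HM:M]=2$) and the surjective-but-not-injective restriction onto $D_{2^n}$.
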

\begin{proof}[Preuve]
Tout d'abord, il est clair que $HM/K$ est galoisienne. De plus, on a $H \not\subset M$ car $\Z/4\Z$ n'est pas quotient de $D_{2^{n}}$. Par conséquent, on a $[HM:M]=2$, les extensions $H/L$ et $M/L$ sont linéairement disjointes et $\res^{HM/H}_{M/L}$ est un isomorphisme. Ainsi, ${r}$ admet un unique relèvement ${\rho}$ dans $\Gal(HM/H)$. A partir de maintenant, on se donne un relèvement ${\sigma}$ de $\tau$ \`a $HM$. 

Maintenant, ${\sigma}$ est d'ordre $4$. En effet, on a $o({\sigma}) \geq o(\tau)=4$. De plus, ${\sigma}$ ne fixe pas $L=M^{\langle {r} \rangle}$ et ${\rm{res}}^{HM/K}_{M/K}({\sigma}) \in D_{2^n}$. Par conséquent, on a ${\rm{res}}^{HM/K}_{M/K}({\sigma}) ^2 =\mathrm{Id}_{M}$. Puisque $[HM:M]=2$, on obtient $o({\sigma}^{2}) \leq 2$. Ainsi $o({\sigma}) \leq 4$ et donc $o({\sigma})=4$.

Ensuite, ${\sigma}{\rho} {\sigma}^{-1} = {\rho}^{-1}$. En effet, on a $\mathrm{res}^{HM/H}_{M/L}({\sigma} {\rho} {\sigma}^{-1})= \mathrm{res}^{HM/K}_{M/K}({\sigma}) \cdot {r} \cdot \mathrm{res}^{HM/K}_{M/K}({\sigma})^{-1}={r}^{-1}$
car $\mathrm{res}^{HM/K}_{M/K}({\sigma}) \notin \langle {r} \rangle$. Or ${r}^{-1}=\mathrm{res}^{HM/H}_{M/L}({\rho}^{-1})$ et $\mathrm{res}^{HM/H}_{M/L}$ est injective. Donc ${\sigma}{\rho} {\sigma}^{-1} ={\rho}^{-1}$.
 
De plus, on a $\Gal(HM/K)=\langle {\rho},{\sigma} \rangle$. En effet, soit $s$ comme dans \eqref{predih}. Le morphisme de restriction
$\langle{\rho},{\sigma} \rangle \rightarrow  \langle {r}, {s} \rangle =D_{2^{n}}$ est surjectif puisque l'on a ${s}={r}^{a}\res^{HM/K}_{M/K}({\sigma})$ pour un certain $a$. Ainsi $2^{n}=|D_{2^{n}}|$ divise $|\langle {\rho},{\sigma} \rangle |$, donc divise $|\Gal(HM/K)|= 2^{n+1}$. Le morphisme n'est pas injectif car $o({\sigma})=4 \neq 2= o({r}^{-a}{s})=o(\res^{HM/K}_{M/K}({\sigma}))$. On en d\'{e}duit donc $\Gal(HM/K)=\langle {\rho},{\sigma} \rangle$.

Enfin, on a $\Gal(HM/K)=\Delta_{n}$ et ${\rho}$, ${\sigma}$ v\'{e}rifient  (\ref{presdelt}). En effet, les \'{e}galit\'{e}s ${\rho}^{2^{n-1}}=1$, ${\sigma}^{4}=1$, ${\sigma} {\rho}{\sigma}^{-1} ={\rho}^{-1}$ et $|\langle{\rho},{\sigma} \rangle |=2^{n+1}$ montrent que l'on a $\langle {\rho},{\sigma} \rangle= \Delta_{n}$. Le paragraphe précédent donne alors $\Gal(HM/K)=\Delta_{n}$.
\end{proof}
La proposition pr\'{e}c\'{e}dente admet la version r\'{e}guli\`{e}re suivante :
\begin{proposition}\label{regDelta}
Soient $n \geq 3$ et $M/\Q(T)$ une $D_{2^{n}}$-extension $\Q$-r\'{e}guli\`{e}re. Notons $L$ le sous-corps de $M$ fix\'{e} par ${r}$, o\`u ${r}$ est défini dans \eqref{predih}. Supposons que $L/\Q(T)$ se plonge dans une $\Z/4\Z$-extension $H/\Q(T)$. Alors $HM/\Q(T)$ est une $\Delta_{n}$-extension $\Q$-r\'{e}guli\`{e}re.
\end{proposition}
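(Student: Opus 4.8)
Le plan est d'appliquer la proposition \ref{propgen} avec $K=\Q(T)$ afin d'obtenir immédiatement la structure de groupe, de sorte qu'il ne restera plus qu'à établir la $\Q$-régularité. Comme $L/\Q(T)$ se plonge dans la $\Z/4\Z$-extension $H/\Q(T)$, la proposition \ref{propgen} donne en effet que $HM/\Q(T)$ est une $\Delta_n$-extension, et l'on peut fixer des relèvements $\rho$ et $\sigma$ vérifiant la présentation \eqref{presdelt}. Tout le travail restant consiste à montrer que $HM\cap\oQ=\Q$.

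La première étape serait d'identifier le sous-groupe $\Gal(HM/M)$. D'après la preuve de la proposition \ref{propgen}, le morphisme $\res^{HM/\Q(T)}_{M/\Q(T)}\colon\Delta_n\to D_{2^n}$ est surjectif, de noyau d'ordre $|\Delta_n|/|D_{2^n}|=2$ ; comme $\sigma^2$ est central dans $\Delta_n$, non trivial et d'image triviale dans $D_{2^n}$, ce noyau est $\langle\sigma^2\rangle$, c'est-à-dire $M=HM^{\langle\sigma^2\rangle}$. L'ingrédient essentiel de la preuve est alors que $\sigma^2$ appartient au sous-groupe de Frattini $\Phi(\Delta_n)$ : cela résulte simplement de ce que $\Delta_n$ est un $2$-groupe (d'ordre $2^{n+1}$) et que $\sigma^2$ est un carré, donc appartient à $\Delta_n^2\subseteq\Phi(\Delta_n)$ (on a d'ailleurs $\Phi(\Delta_n)=\langle\rho^2,\sigma^2\rangle$).

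Pour conclure, je poserais $k=HM\cap\oQ$ et $N=\Gal(HM/k(T))$, qui est distingué dans $\Delta_n$. La $\Q$-régularité de $M/\Q(T)$ entraîne que $M$ et $\oQ(T)$ sont linéairement disjointes au-dessus de $\Q(T)$, d'où $M\cap k(T)=\Q(T)$ puisque $k\subseteq\oQ$. Via la correspondance de Galois et les identifications $M=HM^{\langle\sigma^2\rangle}$ et $k(T)=HM^N$, cette égalité se traduit par $\langle N,\sigma^2\rangle=\Delta_n$. Comme $\sigma^2\in\Phi(\Delta_n)$ et que les éléments du sous-groupe de Frattini sont des non-générateurs, on obtient $N=\Delta_n$, donc $k(T)=\Q(T)$, et finalement $k=\Q$ : l'extension $HM/\Q$ est régulière.

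Le point délicat n'est pas calculatoire mais conceptuel : il faut voir que la seule $\Q$-régularité de $M/\Q(T)$ suffit, sans aucune hypothèse de régularité sur $H/\Q$, ce qui repose sur le fait — propre à la structure de $\Delta_n$ — que $\Gal(HM/M)$ est contenu dans $\Phi(\Delta_n)$. Notons qu'une fois ce résultat acquis, $H/\Q$ est nécessairement régulière, puisque $H\subseteq HM$.
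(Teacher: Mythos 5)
Votre démonstration est correcte, mais elle emprunte une voie réellement différente de celle de l'article. L'article commence par établir que $H/\Q(T)$ est nécessairement $\Q$-régulière : sinon $H\cap\oQ$ serait quadratique sur $\Q$ (car $L\subset H$ et $L/\Q(T)$ est quadratique et $\Q$-régulière), et $(H\cap\oQ)(T)$ et $L$ seraient deux sous-extensions quadratiques linéairement disjointes de $H/\Q(T)$, d'où $\Gal(H/\Q(T))\simeq\Z/2\Z\times\Z/2\Z$, contredisant la cyclicité ; il applique ensuite la proposition \ref{propgen} à la fois aux extensions $M/\Q(T)$, $H/\Q(T)$ et aux extensions $M\oQ/\oQ(T)$, $H\oQ/\oQ(T)$ (c'est précisément pour cela que la régularité de $H/\Q$ est établie d'abord, afin que $H\oQ/\oQ(T)$ reste cyclique d'ordre $4$), et conclut par comparaison des degrés $[HM:\Q(T)]=[HM\oQ:\oQ(T)]=2^{n+1}$. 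Vous, au contraire, n'appliquez la proposition \ref{propgen} que sur $\Q(T)$, identifiez $\Gal(HM/M)=\langle\sigma^2\rangle$ (ce qui découle bien de la preuve de la proposition \ref{propgen}, puisque la restriction de $\sigma$ à $M$ est une involution hors de $\langle r\rangle$), puis exploitez le fait que $\sigma^2$ est un carré dans le $2$-groupe $\Delta_n$, donc appartient à son sous-groupe de Frattini : la régularité de $M/\Q$ donne $M\cap k(T)=\Q(T)$ pour $k=HM\cap\oQ$, la correspondance de Galois traduit cela en $\langle\Gal(HM/k(T)),\sigma^2\rangle=\Delta_n$, et la propriété de non-générateur des éléments de Frattini force $\Gal(HM/k(T))=\Delta_n$, donc $k=\Q$. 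Tous les pas sont justes (la normalité de $\Gal(HM/k(T))$ n'est d'ailleurs même pas nécessaire). Votre argument est plus conceptuel et légèrement plus général — il fonctionne dès que le sous-groupe fixant $M$ est contenu dans le sous-groupe de Frattini du groupe total, et il redonne en corollaire la régularité de $H/\Q$ que l'article doit démontrer en préambule — tandis que l'argument de l'article est plus élémentaire : un court raisonnement par l'absurde sur $H$, puis un simple comptage de degrés après extension des scalaires à $\oQ$.
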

\begin{proof}[Preuve]
Supposons que $H/\Q$ ne soit pas régulière. Alors $[H \cap \overline{\Q}:\Q]=2$ car $L \subset H$ et $L/\Q(T)$ est une extension de degr\'{e} $2$ $\Q$-r\'{e}guli\`{e}re. Ainsi, $(H \cap \overline{\Q})(T)/\Q(T)$ et $L/\Q(T)$ sont lin\'{e}airement disjointes, ce qui entraîne $\Gal(H/\Q(T)) \simeq \Z/2\Z \times \Z/2\Z$, une contradiction. Par cons\'{e}quent, $H/\Q(T)$ est $\Q$-r\'{e}guli\`{e}re. En appliquant la proposition \ref{propgen} aux extensions $M/\Q(T)$ et $H/\Q(T)$ (resp. $M\overline{\Q}/\overline{\Q}(T)$ et $H\overline{\Q}/\overline{\Q}(T)$), on obtient que $HM/\Q({T})$ (resp. $HM\overline{\Q}/\overline{\Q}(T)$) est une $\Delta_{n}$-extension. Ainsi, $HM/\Q(T)$ est $\Q$-r\'{e}guli\`{e}re.
\end{proof}

\subsection{Un lemme sur les racines de l'unité} 
Nous terminons cette section avec un lemme, que nous utiliserons à plusieurs reprises par la suite :
\begin{lemma}\label{ramggg}
Soient $m \geq 3$ et $\xi=\exp(2 \pi i /m)$. Posons $s_{k}=-(1-\xi^{k})/2+1/(2(1-\xi^{k}))$ pour tout $k \in (\Z/m\Z)^{*}$. Soit $\overline{\mathcal{O}}$ la clôture intégrale de $\oQ[T]$ dans $\oQ(T)(\sqrt{T^2+1})$.
\vskip 0.5mm
\noindent
a) Pour $k \neq k'$, on a $s_{k} \neq s_{k'}$.
\vskip 0.5mm
\noindent
b) Pour $\epsilon \in \{-1,1\}$ et $k \in (\Z/m\Z)^{*}$, on a  
$s_{k} \neq \epsilon i$.
\vskip 0.5mm
\noindent
c) Les id\'{e}aux $(T+1 +(-1)^{\ell} \sqrt{T^{2}+1}- \xi^{k})\overline{\mathcal{O}}$ ($k \in (\Z/m\Z)^{*}$ et $\ell \in \{0,1\}$) sont premiers et deux \`{a} deux distincts. De plus, pour $k \in (\Z/m\Z)^{*}$, on a $(T+1 \pm \sqrt{T^{2}+1}- \xi^{k})\overline{\mathcal{O}} \cap \oQ[T]= (T-s_k)\oQ[T]$.
\end{lemma}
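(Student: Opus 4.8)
The plan is to treat the three parts in order, doing the arithmetic on roots of unity first (parts a and b are purely about the numbers $s_k$), then using this to control the ramification behaviour in $\overline{\mathcal{O}}$ (part c). For part a), I would substitute $u=1-\xi^k$ and observe that $s_k = -u/2 + 1/(2u)$, so $s_k = s_{k'}$ amounts to $-u/2 + 1/(2u) = -u'/2 + 1/(2u')$, i.e. $(u-u')(1 + \frac{1}{uu'}) = 0$ after clearing, hence $u=u'$ or $uu' = -1$. The case $u=u'$ gives $\xi^k = \xi^{k'}$, so $k=k'$ in $(\Z/m\Z)^*$; and $uu'=-1$, i.e. $(1-\xi^k)(1-\xi^{k'}) = -1$, would force a real constraint one can rule out — indeed expanding gives $1 - \xi^k - \xi^{k'} + \xi^{k+k'} = -1$, and comparing this against the known factorisations of $1-X^m$ (or simply noting that $|1-\xi^k|, |1-\xi^{k'}| \le 2$ with equality only at the single value $\xi^k = -1$, together with an argument modulo a prime above $2$ in $\Z[\xi]$) contradicts the equation. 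Part b) is similar: $s_k = \epsilon i$ rewrites as $-(1-\xi^k)^2 + 1 = 2\epsilon i (1-\xi^k)$, a quadratic equation in $\xi^k$ with coefficients in $\Z[i]$; solving, $1-\xi^k = \epsilon i \pm \sqrt{-1+1} = \epsilon i$, so $\xi^k = 1-\epsilon i$, which has absolute value $\sqrt 2 \ne 1$, impossible for a root of unity.

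For part c), the key computation is to identify the $\overline{\Q}[T]$-ideal below each $(T+1+(-1)^\ell\sqrt{T^2+1}-\xi^k)\overline{\mathcal{O}}$. Writing $y = \sqrt{T^2+1}$, the element $\alpha_{k,\ell} = T+1+(-1)^\ell y - \xi^k$ has the conjugate $\beta_{k,\ell} = T+1-(-1)^\ell y - \xi^k$ over $\overline{\Q}(T)$, and $\alpha_{k,\ell}\beta_{k,\ell} = (T+1-\xi^k)^2 - (T^2+1) = -2(\xi^k-1)T + (1-\xi^k)^2 = -2(\xi^k-1)\bigl(T - \tfrac{(1-\xi^k)^2}{-2(\xi^k-1)}\bigr)$. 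Simplifying the root, $\tfrac{(1-\xi^k)^2}{2(\xi^k-1)} = -\tfrac{1-\xi^k}{2}$, so one must be a bit more careful: the product is $(1-\xi^k)^2 - 2(\xi^k-1)T = (1-\xi^k)\bigl((1-\xi^k) + 2T\bigr)$, hence up to the unit $2(1-\xi^k)$ the norm of $\alpha_{k,\ell}$ down to $\overline{\Q}[T]$ is $T + \tfrac{1-\xi^k}{2}$. This is not yet $T-s_k$; the discrepancy is the point of introducing $s_k$, and I expect one actually needs the norm from $\overline{\Q}(T)(y)$ relative to $\overline{\Q}[T]$ combined with the fact that $y^2 = T^2+1$, i.e. one should compute $N(\alpha_{k,\ell}) = \alpha_{k,0}\alpha_{k,1}$ \emph{as an element of} $\overline{\Q}[T]$ but after also accounting for the extension being ramified at the zeros of $T^2+1$. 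Concretely $\alpha_{k,0}\alpha_{k,1} = (T+1-\xi^k)^2 - (T^2+1)$, and the claim $(\alpha_{k,\ell})\overline{\mathcal{O}} \cap \overline{\Q}[T] = (T-s_k)\overline{\Q}[T]$ should follow by checking that $T-s_k$ divides this product and that the quotient is a unit at the prime in question — the definition $s_k = -(1-\xi^k)/2 + 1/(2(1-\xi^k))$ is presumably exactly the value making $T-s_k$ the right generator once the relation $y^2 = T^2+1$ is used to rewrite $(T+1-\xi^k)^2-(T^2+1)$ in terms of a linear factor times a unit of $\overline{\mathcal{O}}$.

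The primality of each ideal $(\alpha_{k,\ell})\overline{\mathcal{O}}$ I would get from the fact that $\overline{\mathcal{O}}$ is the integral closure in a quadratic extension of $\overline{\Q}(T)$, that $\overline{\Q}[T]$ is a PID, and that the prime $(T-s_k)\overline{\Q}[T]$ is either inert or ramified in $\overline{\mathcal{O}}$ — it cannot split, since if it split the two primes above it would be swapped by the nontrivial automorphism $y \mapsto -y$, whereas $\alpha_{k,0}$ and $\alpha_{k,1}$ are the two distinct elements of $\overline{\mathcal{O}}$ lying over $T-s_k$ and part b) (via $s_k \ne \pm i$, i.e. $T-s_k$ is not one of the two branch points $T^2+1=0$) guarantees $T-s_k$ is unramified in $\overline{\mathcal{O}}$, forcing it to be inert, so $(\alpha_{k,\ell})\overline{\mathcal{O}}$ is the unique prime above it and is prime. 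Finally, the ideals are pairwise distinct: for fixed $k$, $(\alpha_{k,0})\overline{\mathcal{O}} \ne (\alpha_{k,1})\overline{\mathcal{O}}$ because their common prime below is inert (two distinct elements over an inert prime generate the same prime only if they differ by a unit, but $\alpha_{k,0}/\alpha_{k,1}$ is not a constant); and for $k \ne k'$ the ideals lie over the distinct primes $(T-s_k)$ and $(T-s_{k'})$ by part a). The step I expect to be the real obstacle is pinning down \emph{exactly} why the contraction of $(\alpha_{k,\ell})\overline{\mathcal{O}}$ to $\overline{\Q}[T]$ is $(T-s_k)$ rather than $(T + \tfrac{1-\xi^k}{2})$ — i.e. correctly bookkeeping the factor $\tfrac{1}{2(1-\xi^k)}$ coming from the relation $y^2 = T^2+1$ — and making sure the relevant quotient really is a unit of $\overline{\mathcal{O}}$ and not merely of $\overline{\Q}(T)(y)$; everything else is bounded-degree algebra.
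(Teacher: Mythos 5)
Your parts a) and c) both have genuine gaps; c) is the more serious one. The crux of c) is the single identity $(T+1+\sqrt{T^{2}+1}-\xi^{k})(T+1-\sqrt{T^{2}+1}-\xi^{k})=2T(1-\xi^{k})+(1-\xi^{k})^{2}-1=2(1-\xi^{k})(T-s_{k})$: in your expansion you dropped the constant $-1$, which is why you land on $T+\tfrac{1-\xi^{k}}{2}$ and then leave the reconciliation with $s_{k}$ as an unresolved ``obstacle''. There is no ramification bookkeeping to do --- with the $-1$ restored the product is exactly $2(1-\xi^{k})(T-s_{k})$ with $2(1-\xi^{k})$ a unit, and this identity carries all of c) in the paper: since $(T-s_{k})\overline{\mathcal{O}}\neq\overline{\mathcal{O}}$, neither factor is invertible, and the extension being of degree $2$ forces each principal ideal to be prime, with contraction $(T-s_{k})\oQ[T]$. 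Your primality/distinctness route is moreover incorrect: all residue fields here are $\oQ$, which is algebraically closed, so an unramified prime of $\oQ[T]$ can never be inert in $\overline{\mathcal{O}}$ --- unramified forces it to split into two distinct primes. Your argument concludes ``inert'' and is internally inconsistent: were $(T-s_{k})$ inert, there would be a unique prime above it, so $(\alpha_{k,0})\overline{\mathcal{O}}$ and $(\alpha_{k,1})\overline{\mathcal{O}}$ would have to coincide rather than be distinct; and ``$\alpha_{k,0}/\alpha_{k,1}$ is not a constant'' does not exclude its being a unit of $\overline{\mathcal{O}}$ (e.g. $T+\sqrt{T^{2}+1}$ is a nonconstant unit). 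The paper argues the other way round: by b), $s_{k}\neq\pm i$, so $(T-s_{k})$ is unramified (\cite[Proposition 6.2.3]{Sti09}), whence the two primes above it are distinct; distinctness for $k\neq k'$ then comes from a).

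In a) you correctly reduce to excluding $(1-\xi^{k})(1-\xi^{k'})=-1$, but none of your proposed ways of excluding it works for all $m\geq 3$: when $m$ is not a prime power, $1-\xi^{k}$ is a unit of $\Z[\xi]$ (norm $\pm1$), so norm or valuation arguments at a prime above $2$ give no contradiction, and $|1-\xi^{k}|\,|1-\xi^{k'}|=1$ is not by itself contradictory. The missing (and uniform in $m$) argument is the paper's: the relation gives $\xi^{k}=(\xi^{k'}-2)/(\xi^{k'}-1)$, and $|\xi^{k}|=1$ would force $|\xi^{k'}-1|=|\xi^{k'}-2|$, i.e. $\mathrm{Re}(\xi^{k'})=3/2$, impossible on the unit circle. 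Part b) is fine and matches the paper, up to a sign slip (one gets $\xi^{k}=1+\epsilon i$, same modulus-$\sqrt{2}$ contradiction).
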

\begin{proof}[Preuve] 
a) S'il y avait \'{e}galit\'{e}, on aurait $(1-\xi^{k})(1-\xi^{k'})=-1$ et donc $
\xi^{k}=(\xi^{k'}-2)/(\xi^{k'}-1)$. Ainsi $\xi^{k'}$ serait à la fois sur le cercle unité et la médiatrice du segment $[1,2]$, une contradiction.  
\vskip 1mm
\noindent
b) S'il y avait \'{e}galit\'{e}, on aurait $\xi^{2k}-2(1+i\epsilon)\xi^{k}+2i\epsilon=0 $ et donc $\xi^{k}=1+i\epsilon$, une contradiction.
\vskip 1mm
\noindent 
c) On a tout d'abord
$(T+1 + \sqrt{T^{2}+1}- \xi^{k} )(T+1 - \sqrt{T^{2}+1}- \xi^{k})=2(1-\xi^k)(T-s_k)$, 
ce qui donne $(T+1 + \sqrt{T^{2}+1}- \xi^{k} )\overline{\mathcal{O}}\cdot (T+1 - \sqrt{T^{2}+1}- \xi^{k} )\overline{\mathcal{O}}=(T-s_k)\overline{\mathcal{O}}$.
Puisque $(T-s_k)\overline{\Q}[T] \neq \overline{\Q}[T]$, on a $(T-s_k)\overline{\mathcal{O}} \not= \overline{\mathcal{O}}$. Par conséquent, $T+1 + \sqrt{T^{2}+1}- \xi^{k} $ et son conjugué $T+1 - \sqrt{T^{2}+1}- \xi^{k}$ ne sont pas inversibles dans $\overline{\mathcal{O}}$. Puisque $[\oQ(T)(\sqrt{T^2+1}):\overline{\Q}(T)]=2$,  on en déduit que $(T+1 + \sqrt{T^{2}+1}- \xi^{k} )\overline{\mathcal{O}}$ et $(T+1 - \sqrt{T^{2}+1}- \xi^{k} )\overline{\mathcal{O}}$ sont des idéaux premiers. De plus, ceux-ci sont nécessairement distincts en vertu de \cite[Proposition 6.2.3]{Sti09} et du b). Enfin, le a) entraîne que les idéaux $(T-s_k)\overline{\Q}[T]$ ($k \in (\Z/m\Z)^{*}$) sont deux à deux distincts.
\end{proof}
\section{Produits semi-directs $\Z/m\Z \rtimes \Z/2\Z$}\label{realexpl1}
Dans cette partie, nous construisons, pour tout $m \geq 3$ et tout produit semi-direct $G=\Z/m\Z \rtimes \Z/2\Z$, une extension galoisienne $\Q$-régulière explicite de $\Q(T)$ de groupe $G$ (voir théorème \ref{princ}). 

\subsection{Notations}\label{notprin}
On pose $L=\Q(T)(\sqrt{T^{2}+1})$ et on note $\beta$ le générateur de $\Gal(L/\Q(T))$. On note $\mathcal{O}$ (resp. $\overline{\mathcal{O}}$) la clôture intégrale de $\Q[T]$ (resp. $\oQ[T]$) dans $L$ (resp. $L\oQ$). On se donne $m \geq 3$ et on note $\xi=\exp(2 \pi i/m)$. Pour $\ell \in \{1,2 \}$ et $k \in (\Z/m\Z)^{*}$, on note $\mathcal{P}_{\ell,k}$ l'idéal premier de $\overline{\mathcal{O}}$ engendré par $T+1+(-1)^{\ell}\sqrt{T^{2}+1}-\xi^{k}$ (voir lemme \ref{ramggg}) et on choisit une racine $m$-i\`{e}me $x_{\ell,k}$ de $T+1+(-1)^{\ell}\sqrt{T^{2}+1}-\xi^{k}$ dans $\overline{\Q(T)}$. Pour $ \ell \in \{1,2\}$, on note $M_\ell$ le compositum des corps $L(\xi, x_{\ell, k})$ $(k \in (\Z/m\Z)^*$) et, pour tout $j \in (\Z/m\Z)^{*}$, on pose

\[
y_{\ell,j}=\prod_{k \in (\Z/m\Z)^{*}}x_{\ell,k}^{r_{m}(j/k)},
\] o\`{u} $r_{m}:(\Z/m\Z)^{*}\rightarrow \llbracket 0, m-1\rrbracket$ envoie $k \in (\Z/m\Z)^{*}$ sur son unique repr\'{e}sentant modulo $m$. Pour tout $\ell \in \{1,2\}$ et tout $l \in \N$, on pose
\[
z_{\ell,l}=\sum_{j \in (\Z/m\Z)^{*}}\xi^{lj}y_{\ell,j}=\sum_{j \in (\Z/m\Z)^{*}}\xi^{lj}\prod_{k \in (\Z/m\Z)^{*}}x_{\ell,k}^{r_{m}(j/k)}.
\] 
Pour tout $\ell \in \{1,2\}$, notons $N_{\ell}=L(z_{\ell,1})$ et
$
h_{\ell}(X)= \prod_{l=1}^{m}(X-z_{\ell,l}).
$
Posons $F=N_{1}N_{2}=L(z_{1,1},z_{2,1})$ et $E_{d}=\Q(T,\sqrt{T^{2}+1}+\sum_{l=1}^{m}z_{2,l}z_{1,-dl})$ pour tout $d \in (\Z/m\Z)^{*}$ vérifiant $d^2=1$. 

On a le diagramme suivant :
\[{\xymatrix@-4ex{ & & & & M_1M_2 \ar@{-}[d] & & & & \\
& & & & N_{1}N_{2}=F  \ar@{-}[dd] & & & & \\
& M_1  \ar@{-}[rrruu] \ar@{-}[dr] \ar@{-}[drr] & & & & & &  \ar@{-}[llluu] M_2  \ar@{-}[dll]  \ar@{-}[dr] & \\
L(\xi ,x_{1,1})  \ar@{-}[ru] & \dots & L(\xi, x_{1,{m-1}}) & N_{1}  \ar@{-}[ddddr] \ar@{-}[ruu] &E_{d} \ar@{-}[ddd] & \ar@{-}[luu] N_{2}  \ar@{-}[ddddl] & L(\xi, x_{2,1}) \ar@{-}[ru] & \dots & L(\xi, x_{2,{m-1}}) \\
& & & &  \ar@{-}[dd] & & & & \\
& L(\xi)  \ar@{-}[luu]  \ar@{-}[ruu]& & & & & & L(\xi)  \ar@{-}[luu]  \ar@{-}[ruu] & \\
& & & &   & & & & \\
& & & & L \ar@{-}[llluu] \ar@{-}[d] \ar@{-}[rrruu] & & & & \\
& & & & \mathbb{Q}(T) & & & & \\
}}
\]

\subsection{Résultats préparatoires}
Commençons par déterminer $\Gal(M_1/L)$ et $\Gal(M_2/L)$.
\begin{lemma}\label{unicycl}
a) Les extensions $(L(\xi ,x_{\ell,k})/L(\xi))_{\ell \in \{1,2 \},k \in (\Z/m\Z)^{*}}$ et  $(\oQ  L(x_{\ell,k})/\oQ L )_{\ell \in \{1,2 \},k \in (\Z/m\Z)^{*}}$ sont cycliques de degré $m$ et lin\'{e}airement disjointes dans leur ensemble \footnote{Rappelons que des extensions galoisiennes finies $M_1/K, \dots ,M_n/K$ sont linéairement disjointes dans leur ensemble si, pour toute partition $(I, J)$ de $\llbracket 1, n \rrbracket$, le compositum des $M_{i}$ ($i \in I$) est linéairement disjoint sur $K$ du compositum des $M_j$ ($j \in J$).}. Ainsi, pour $\ell \in \{1,2\}$, l'extension $M_{\ell}/L(\xi)$ est galoisienne de groupe $(\Z/m\Z)^{\varphi(m)}$, où $\varphi$ est l'indicatrice d'Euler. De plus, pour $\ell \in \{1,2\}$, les idéaux premiers de $\overline{\mathcal{O}}$ ramifiés dans $\oQ M_{\ell}$ sont les $\mathcal{P}_{\ell,k}$ ($k \in (\Z/m\Z)^{*}$).
\vskip 1 mm
\noindent
b) Soit $\ell \in \{1,2\}$. L'extension $M_{\ell}/L$ est galoisienne de degré $m^{\varphi(m)}\varphi(m)$. De plus, pour tout $\omega \in \Gal(M_\ell/L)$, il existe un unique  $v(\omega) \in (\Z/m\Z)^{*}$ tel que $\omega(\xi)=\xi^{v(\omega)}$ et, pour tout $k \in (\Z/m\Z)^{*}$, il existe un unique $s_{k} \in \Z/m\Z$ tel que $\omega(x_{\ell,k})=\xi^{s_{k}}x_{\ell,v(\omega)k}$. L'application
$$f : \left \{ \begin{array} {ccc}
\Gal (M_\ell/L) & \rightarrow & (\Z/m\Z)^* \times (\Z/m\Z)^{\varphi(m)} \\
\omega & \mapsto & (v(\omega), (s_k)_{k \in (\Z/m\Z)^{*}})\\
\end{array} \right.$$
est en fait bijective.
\end{lemma}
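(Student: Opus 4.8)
The plan is to derive part~a) from Kummer theory and then deduce part~b) by a counting argument. Throughout, write $\alpha_{\ell,k}=T+1+(-1)^{\ell}\sqrt{T^{2}+1}-\xi^{k}$, so that $x_{\ell,k}^{m}=\alpha_{\ell,k}$ and, by \S\ref{notprin} and part~c) of Lemma~\ref{ramggg}, $\alpha_{\ell,k}\overline{\mathcal{O}}=\mathcal{P}_{\ell,k}$. Since $\xi$ is a primitive $m$-th root of unity, both $L(\xi)$ and $\oQ L$ contain $\mu_{m}$, so the theory of Kummer extensions applies over each of these fields: the compositum over $L(\xi)$ (resp.\ over $\oQ L$) of the fields obtained by adjoining an $m$-th root of each $\alpha_{\ell,k}$ ($\ell\in\{1,2\}$, $k\in(\Z/m\Z)^{*}$) is abelian with group a quotient of $(\Z/m\Z)^{2\varphi(m)}$, and it is all of $(\Z/m\Z)^{2\varphi(m)}$ --- equivalently, the $2\varphi(m)$ extensions in question are cyclic of degree $m$ and linearly disjoint as a whole --- precisely when the classes of the $\alpha_{\ell,k}$ in $L(\xi)^{*}/(L(\xi)^{*})^{m}$ (resp.\ in $(\oQ L)^{*}/((\oQ L)^{*})^{m}$) are independent, i.e.\ generate a free $\Z/m\Z$-submodule of rank $2\varphi(m)$. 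Thus the group statement of a) reduces to this independence.

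I would prove the independence inside the Dedekind domain $\overline{\mathcal{O}}$ (the integral closure of the principal ideal domain $\oQ[T]$ in the finite extension $\oQ L$). If $\prod_{\ell,k}\alpha_{\ell,k}^{e_{\ell,k}}$ is an $m$-th power in $L(\xi)^{*}$, then it is a fortiori an $m$-th power in $(\oQ L)^{*}$, hence $\prod_{\ell,k}\mathcal{P}_{\ell,k}^{e_{\ell,k}}$ is the $m$-th power of a fractional ideal of $\overline{\mathcal{O}}$; since the $\mathcal{P}_{\ell,k}$ ($\ell\in\{1,2\}$, $k\in(\Z/m\Z)^{*}$) are pairwise distinct prime ideals by part~c) of Lemma~\ref{ramggg}, unique factorisation of ideals in $\overline{\mathcal{O}}$ forces $m\mid e_{\ell,k}$ for all $\ell,k$. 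The identical computation gives the case of $(\oQ L)^{*}/((\oQ L)^{*})^{m}$. Restricting to a single value of $\ell$ yields $\Gal(M_{\ell}/L(\xi))\cong(\Z/m\Z)^{\varphi(m)}$. For the ramification assertion in a): $\alpha_{\ell,k}$ generates the prime $\mathcal{P}_{\ell,k}$, so $v_{\mathcal{P}_{\ell,k}}(\alpha_{\ell,k})=1$, coprime to $m$, while $v_{\mathfrak{p}}(\alpha_{\ell,k})=0$ for every prime $\mathfrak{p}\neq\mathcal{P}_{\ell,k}$ of $\overline{\mathcal{O}}$; as we are in characteristic $0$, the cyclic Kummer extension $\oQ L(x_{\ell,k})/\oQ L$ is then ramified precisely at $\mathcal{P}_{\ell,k}$, and since $\oQ M_{\ell}$ is the compositum over $\oQ L$ of these extensions, the primes of $\overline{\mathcal{O}}$ ramified in $\oQ M_{\ell}$ are exactly the $\mathcal{P}_{\ell,k}$ (ramification away from $\{\mathcal{P}_{\ell,k}\}_{k}$ disappears in a compositum of extensions unramified there, while ramification at each $\mathcal{P}_{\ell,k}$ persists from the subextension $\oQ L(x_{\ell,k})$).

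For part~b): $M_{\ell}/L$ is Galois because any $L$-embedding $\sigma$ of $M_{\ell}$ into $\overline{\Q(T)}$ sends $\xi$ to some $\xi^{a}$ with $a\in(\Z/m\Z)^{*}$, and then $\sigma(x_{\ell,k})$ is a root of $X^{m}-\alpha_{\ell,ak}$, hence of the form $\zeta\,x_{\ell,ak}$ with $\zeta\in\mu_{m}\subset M_{\ell}$, so that $\sigma(M_{\ell})=M_{\ell}$. Since $L\cap\oQ=\Q$ --- the conic $Y^{2}=T^{2}+1$ has the rational point $(T,Y)=(0,1)$, so $L$ is a rational function field with field of constants $\Q$ --- we have $[L(\xi):L]=\varphi(m)$, whence $[M_{\ell}:L]=[M_{\ell}:L(\xi)]\,[L(\xi):L]=m^{\varphi(m)}\varphi(m)$. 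Given $\omega\in\Gal(M_{\ell}/L)$, it fixes $L$ (which contains $\sqrt{T^{2}+1}$), so $\omega(\xi)$ is a primitive $m$-th root of unity, defining a unique $v(\omega)\in(\Z/m\Z)^{*}$ with $\omega(\xi)=\xi^{v(\omega)}$; then $\omega(x_{\ell,k})^{m}=\omega(\alpha_{\ell,k})=\alpha_{\ell,v(\omega)k}=x_{\ell,v(\omega)k}^{m}$, so $\omega(x_{\ell,k})\,x_{\ell,v(\omega)k}^{-1}\in\mu_{m}$ equals $\xi^{s_{k}}$ for a unique $s_{k}\in\Z/m\Z$; this defines the map $f$. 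Finally $f$ is injective since $\omega$ is determined by its values on $\xi$ and on the $x_{\ell,k}$ ($k\in(\Z/m\Z)^{*}$), which generate $M_{\ell}$ over $L$, and it is bijective since $|\Gal(M_{\ell}/L)|=m^{\varphi(m)}\varphi(m)=|(\Z/m\Z)^{*}\times(\Z/m\Z)^{\varphi(m)}|$.

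The one genuinely substantial point is the independence of the $\alpha_{\ell,k}$ modulo $m$-th powers, which rests entirely on the ``pairwise distinct primes'' statement of part~c) of Lemma~\ref{ramggg}; the remaining ingredients --- Kummer theory, the cyclotomic degree $[L(\xi):L]=\varphi(m)$, tameness of the ramification in characteristic $0$, and the counting argument --- are routine, the only care needed being to carry out the Kummer argument over the small field $L(\xi)$ while reading off the corresponding ideal factorisations in $\overline{\mathcal{O}}\subset\oQ L$.
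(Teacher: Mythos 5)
Your proposal is correct; part b) is essentially the paper's own argument (Galois closure via the action on $\xi$ and the $x_{\ell,k}$, regularity of $L/\Q$ to get $[L(\xi):L]=\varphi(m)$, then well-definedness, injectivity and a cardinality count for $f$), but your treatment of part a) goes through a different mechanism. The paper gets $[\,\oQ L(x_{\ell,k}):\oQ L\,]=m$ from the fact that $X^{m}-\alpha_{\ell,k}$ is Eisenstein at $\mathcal{P}_{\ell,k}$, cyclicity from Kummer theory, and then deduces linear disjointness ``dans leur ensemble'' by observing that every nontrivial subextension of $\oQ L(x_{\ell,k})/\oQ L$ is ramified exactly at $\mathcal{P}_{\ell,k}$, that these primes are pairwise distinct (lemme \ref{ramggg}~c)), and invoking Abhyankar's lemma --- which simultaneously yields the description of the primes of $\overline{\mathcal{O}}$ ramified in $\oQ M_{\ell}$. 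You instead use the Kummer-theoretic independence criterion: the classes of the $\alpha_{\ell,k}$ in $K^{*}/(K^{*})^{m}$ (for $K=L(\xi)$ and $K=\oQ L$) generate $(\Z/m\Z)^{2\varphi(m)}$, which you check by unique factorisation of fractional ideals in the Dedekind domain $\overline{\mathcal{O}}$, using that $\alpha_{\ell,k}\overline{\mathcal{O}}=\mathcal{P}_{\ell,k}$ are pairwise distinct principal primes; the ramification assertion then needs a separate (but easy, tame) computation of ramification indices $m/\gcd(m,v_{\mathcal{P}}(\alpha_{\ell,k}))$ and the stability of unramifiedness under composita. Both routes rest on exactly the same input, lemme \ref{ramggg}~c); yours dispenses with Abhyankar's lemma and with the enumeration of subextensions at the cost of invoking the Dedekind property of $\overline{\mathcal{O}}$ and a small extra step for the ramification statement, while the paper's ramification-theoretic argument packages degree, disjointness and the ramified-prime description in one stroke.
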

\begin{proof}[Preuve]
a) Soient $\ell \in \{1,2\}$ et $k \in (\Z/m\Z)^{*}$. Du fait que $X^{m}-(T+1+(-1)^{\ell}\sqrt{T^{2}+1}-\xi^{k}) \in \overline{\mathcal{O}}[X]$ est d'Eisenstein pour l'id\'{e}al premier $\mathcal{P}_{\ell,k}$, on obtient que $\oQ L(x_{\ell,k})/\oQ L$ et $L(\xi,x_{\ell,k})/L(\xi)$ sont de degr\'{e} $m$. Par la th\'{e}orie de Kummer, $\oQ L(x_{\ell,k})/\oQ L$ et $L(\xi,x_{\ell,k})/L(\xi)$ sont cycliques. De plus, les sous-extensions de $L(\xi,x_{\ell,k})/L(\xi)$ (resp. $\oQ L(x_{\ell,k})/\oQ L$) sont les $(L(\xi,x_{\ell,k}^{a})/L(\xi))_{a \mid m}$ (resp. $(\oQ L(x_{\ell,k}^{a})/\oQ L)_{a \mid m}$). Pour tout diviseur $a$ de $m$, diff\'{e}rent de $m$, le seul id\'{e}al premier de $\overline{\mathcal{O}}$ ramifi\'{e} dans $
\oQ L(x_{\ell,k}^{a})/ \oQ L$ est $\mathcal{P}_{\ell,k}$. Comme les idéaux premiers $\mathcal{P}_{\ell,k}$ ($\ell \in \{1,2\}$ et $k \in (\Z/m\Z)^{*}$) sont distincts (voir lemme \ref{ramggg}), le lemme d'Abhyankar fournit la conclusion voulue.
\vskip 2mm
\noindent
b) Le corps $M_\ell$ est le corps de décomposition sur $L$ du polynôme 
$ \prod_{k \in (\Z/m\Z)^{*}} (X^{m}-x_{\ell,k}^m) \in L[X],$ ce qui montre que $M_\ell/L$ est galoisienne. De plus, puisque $L/\Q$ est régulière, $\Q(T,\xi)$ et $L$ sont linéairement disjoints sur $\Q(T)$. On a donc $[L(\xi):L]= \varphi(m)$ et, par le a), $[M_\ell:L]= m^{\varphi(m)} \varphi(m)$. Il est alors clair que l'application $f$ est bien définie et que celle-ci est injective. Pour des raisons de cardinalité, elle est aussi surjective.
\end{proof}
La proposition suivante est inspirée de \cite[Chapter III, Theorem 4.3]{MM18} et permet de déterminer le groupe de Galois de $N_{\ell}/L$.
\begin{proposition}\label{thcycl}
Soit $\ell \in \{1,2\}$.
\vskip 0.5 mm
\noindent
a) Le polyn\^{o}me $h_{\ell}(X)$ est dans $L[X]$ et est irr\'{e}ductible sur $\oQ L$.
\vskip 0.5mm
\noindent
b) L'extension $N_{\ell}/L$ est galoisienne de groupe $\Z/m\Z$. De plus, un g\'{e}n\'{e}rateur de $\Gal(N_{\ell}/L)$ est donn\'{e} par $\gamma_{\ell}$, où $\gamma_{\ell}$ vérifie $\gamma_{\ell}(z_{\ell,l})=z_{\ell,l+1}$ pour tout $l \in \N$.
\end{proposition}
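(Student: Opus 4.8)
The plan is to read the whole proposition off the explicit description of $\Gal(M_\ell/L)$ furnished by Lemma~\ref{unicycl}, exactly in the spirit of \cite[Chapter~III, Theorem~4.3]{MM18}. Fix $\ell$ and write an arbitrary $\omega\in\Gal(M_\ell/L)$ as the pair $(v(\omega),(s_k)_k)$ given by the bijection $f$ of Lemma~\ref{unicycl}b, so that $\omega(\xi)=\xi^{v(\omega)}$ and $\omega(x_{\ell,k})=\xi^{s_k}x_{\ell,v(\omega)k}$ (with $s_k=s_k(\omega)$). The first step is the computation
\[
\omega(y_{\ell,j})=\xi^{\,j\,a(\omega)}\,y_{\ell,\,jv(\omega)},\qquad a(\omega):=\sum_{k\in(\Z/m\Z)^{*}}s_k\,k^{-1}\in\Z/m\Z,
\]
which drops out of $y_{\ell,j}=\prod_k x_{\ell,k}^{r_m(j/k)}$ by reindexing the product through $k\mapsto v(\omega)^{-1}k$ (turning $\prod_k x_{\ell,v(\omega)k}^{r_m(j/k)}$ into $y_{\ell,jv(\omega)}$) and by using $r_m(j/k)\equiv jk^{-1}\pmod m$ to collect the $\xi$-powers. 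Substituting into $z_{\ell,l}=\sum_j\xi^{lj}y_{\ell,j}$ and reindexing once more through $j\mapsto v(\omega)^{-1}j$ collapses everything to the single clean formula
\[
\omega(z_{\ell,l})=z_{\ell,\,l+b(\omega)},\qquad b(\omega):=v(\omega)^{-1}a(\omega)\in\Z/m\Z .
\]

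Granting this, part~(a) is essentially formal. First, separability: by Lemma~\ref{unicycl}a the monomials $\prod_k x_{\ell,k}^{e_k}$ with $0\le e_k\le m-1$ form an $\oQ L$-basis of $\oQ M_\ell$, and the exponent vectors $(r_m(j/k))_k$ attached to the $y_{\ell,j}$ are pairwise distinct for distinct $j$ (compare the $k=1$ coordinate), so the $y_{\ell,j}$ ($j\in(\Z/m\Z)^{*}$) are $\oQ L$-linearly independent; hence $z_{\ell,l}=z_{\ell,l'}$ forces $\xi^{lj}=\xi^{l'j}$ for all $j$, i.e. $l\equiv l'\pmod m$, and $z_{\ell,0},\dots,z_{\ell,m-1}$ are pairwise distinct, so $h_\ell$ is separable of degree $m$. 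Next, the displayed formula shows that $\Gal(M_\ell/L)$ permutes the roots of $h_\ell$, whence its coefficients are $\Gal(M_\ell/L)$-invariant and $h_\ell\in L[X]$. Finally, distinctness of the $z_{\ell,l}$ makes $b$ additive, and $b$ is already onto $\Z/m\Z$ on the subgroup $\Gal(M_\ell/L(\xi))=\{\omega: v(\omega)=1\}$: there $b(\omega)=\sum_k s_k k^{-1}$, the tuples $(s_k)_k$ exhaust $(\Z/m\Z)^{\varphi(m)}$ by Lemma~\ref{unicycl}b, and the term $k=1$ contributes $k^{-1}=1$. Thus $\Gal(M_\ell/L)$ acts transitively on the $m$ distinct roots of $h_\ell$, so $h_\ell$ is irreducible over $L$; running the identical computation over $\oQ L$, where $\Gal(\oQ M_\ell/\oQ L)\cong(\Z/m\Z)^{\varphi(m)}$ by Lemma~\ref{unicycl}a and $b$ becomes $\omega\mapsto\sum_k s_k k^{-1}$, still onto $\Z/m\Z$, gives transitivity and hence irreducibility over $\oQ L$ as well.

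For part~(b): since $h_\ell\in L[X]$ is irreducible with root $z_{\ell,1}$, one has $[N_\ell:L]=[L(z_{\ell,1}):L]=m$. The subgroup $\ker b\trianglelefteq\Gal(M_\ell/L)$ fixes $z_{\ell,1}$, so $N_\ell\subseteq M_\ell^{\ker b}$, and $[M_\ell^{\ker b}:L]=|b(\Gal(M_\ell/L))|=m$; therefore $N_\ell=M_\ell^{\ker b}$ is Galois over $L$ with $\Gal(N_\ell/L)\cong\Gal(M_\ell/L)/\ker b\cong\Z/m\Z$. Choosing $\omega_0\in\Gal(M_\ell/L)$ with $b(\omega_0)=1$ and setting $\gamma_\ell:=\omega_0|_{N_\ell}$, the displayed formula yields $\gamma_\ell(z_{\ell,l})=z_{\ell,l+1}$ for every $l\in\N$, and since $\gamma_\ell^{\,k}(z_{\ell,1})=z_{\ell,1+k}\neq z_{\ell,1}$ for $0<k<m$, the element $\gamma_\ell$ has order $m$ and generates $\Gal(N_\ell/L)$. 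The only genuinely delicate point in the whole argument is the bookkeeping in the two reindexings that produce $\omega(z_{\ell,l})=z_{\ell,l+b(\omega)}$ — in particular checking $\prod_k x_{\ell,v(\omega)k}^{r_m(j/k)}=y_{\ell,jv(\omega)}$ and that the accumulated $\xi$-exponent equals $j\sum_k s_k k^{-1}$ modulo $m$; once that identity is secured, everything else is routine Galois theory.
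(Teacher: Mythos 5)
Your proof is correct, but it takes a genuinely different route from the paper. Your key move is the closed formula $\omega(z_{\ell,l})=z_{\ell,l+b(\omega)}$ with $b(\omega)=v(\omega)^{-1}\sum_k s_k k^{-1}$, valid for \emph{every} $\omega\in\Gal(M_\ell/L)$ (I checked the two reindexings; they are right, and the formula specializes to the paper's two special lifts, which have $b=0$ and $b=1$). From this you get $h_\ell\in L[X]$ by Galois invariance, distinctness of the roots from the $\oQ L$-linear independence of the Kummer monomials $y_{\ell,j}$ (a point the paper never makes explicit, since there separability comes for free from irreducibility), and irreducibility over $\oQ L$ from transitivity, i.e.\ surjectivity of $b$ on $\Gal(\oQ M_\ell/\oQ L)\cong(\Z/m\Z)^{\varphi(m)}$; then $N_\ell=M_\ell^{\ker b}$ gives (b) at once. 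The paper instead proves (a) arithmetically: Newton's identities show the power sums, hence the coefficients, lie in $\mathcal{O}[\xi]$ and descend to $\mathcal{O}$ via the lifts with all $s_k=0$, and irreducibility over $\oQ L$ comes from the Eisenstein criterion at $\mathcal{P}_{\ell,1}$ via the valuation computation $v(q_{\ell,m})=1$; for (b) it passes through the auxiliary field $F_\ell=L(\xi,y_{\ell,1})$ and identifies $N_\ell$ as $F_\ell^{\Omega}$. Your argument is shorter and purely Galois-theoretic, and it describes the action of the whole group $\Gal(M_\ell/L)$ on $N_\ell$ in one stroke; the paper's argument yields extra information not claimed in the statement but in the spirit of the rest of the article, namely integrality of the coefficients of $h_\ell$ (they lie in $\mathcal{O}$) and total ramification of $\oQ N_\ell/\oQ L$ at $\mathcal{P}_{\ell,1}$, which is the kind of ramification data exploited later for the inertia computations. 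Both proofs rest on Lemma \ref{unicycl}, and your generator $\gamma_\ell$ is exactly the paper's (restriction of the element with $v(\omega)=1$, $s_1=1$, $s_k=0$ otherwise).
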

\begin{proof}[Preuve]
Ecrivons 
\[
h_{\ell}(X)=X^{m}+\sum_{\iota=1}^{m}(-1)^{\iota}s_{\ell,\iota}X^{m-\iota}
\] dans $\overline{\Q(T)}[X]$. Pour tout $ \varsigma \in \llbracket 1, m \rrbracket$, notons
\[
q_{\ell,\varsigma}= \sum_{l=1}^{m}z_{\ell,l}^{\varsigma} 
.\] Des identit\'{e}s de Newton, pour tout $\varsigma \in \llbracket 1, m \rrbracket$,  on a 
\begin{equation}
q_{\ell,\varsigma}+\sum_{\iota=1}^{\varsigma-1}(-1)^{\iota}s_{\ell,\iota}q_{\ell,\varsigma-\iota}+(-1)^{\varsigma}\varsigma s_{\ell,\varsigma}=0. 
\label{idnew}
\end{equation}

\vskip 2mm
\noindent
a) Pour $\varsigma \in \llbracket 1, m \rrbracket $, on a $q_{\ell,\varsigma}\in \mathcal{O}[\xi]$. En effet, soit $\varsigma \in \llbracket 1 ,m \rrbracket$. On a
{\everymath={\displaystyle}
$$
\begin{array}{lllll}
q_{\ell,\varsigma}&=& \sum_{l=1}^{m}\left(\sum_{j \in (\Z/m\Z)^{*}}\xi^{lj}y_{\ell,j}\right)^{\varsigma} & = &\sum_{l=1}^{m}\prod_{\lambda=1}^{\varsigma} \left( \sum_{j_{\lambda} \in (\Z/m\Z)^{*}} \xi^{lj_{\lambda}}  y_{\ell,j_{\lambda}} \right)  \\
	&&&=& \sum_{l=1}^{m} \left(\sum_{(j_{1},\dots,j_{\varsigma}) \in ((\Z/m\Z)^{*})^{\varsigma}} \left(\prod_{\lambda=1}^{\varsigma}\xi^{lj_{\lambda}}y_{\ell,j_{\lambda}}\right) \right)\\
	&&&=& \sum_{(j_{1},\dots,j_{\varsigma}) \in ((\Z/m\Z)^{*})^{\varsigma}} \left(\sum_{l=1}^{m}\prod_{\lambda=1}^{\varsigma}\xi^{lj_{\lambda}}\right) \left(\prod_{\lambda=1}^{\varsigma}y_{\ell,j_{\lambda}}\right)\\
	&&&=&\sum_{(j_{1},\dots,j_{\varsigma}) \in ((\Z/m\Z)^{*})^{\varsigma}} \left(\sum_{l=1}^{m}\xi^{l(\sum_{\lambda=1}^{\varsigma}j_{\lambda})}\right) \left(\prod_{\lambda=1}^{\varsigma}y_{\ell,j_{\lambda}} \right).
\end{array} 
$$}

\noindent
Du fait que, pour tout $o \in \Z$, la somme $\sum_{l=1}^{m} \xi^{lo}$ est \'{e}gale \`{a} $m$ si $m$ divise $o$ et est \'{e}gale \`{a} $0$ sinon, les seuls termes non nuls dans la somme ci-dessus sont ceux v\'{e}rifiant $\sum_{\lambda=1}^{\varsigma}j_{\lambda} \equiv 0 \pmod m$. On obtient alors
$$
q_{\ell,\varsigma}  = \sum_{\substack{(j_{1},\dots,j_{\varsigma}) \in ((\Z/m\Z)^{*})^{\varsigma} \\ \sum_{\lambda=1}^{\varsigma}j_{\lambda}\equiv 0 \pmod m} } m \prod_{\lambda=1}^{\varsigma}\left(\prod_{k \in (\Z/m\Z)^{*}} x_{\ell,k}^{r_{m}(j_{\lambda}/k)}\right)= \sum_{\substack{(j_{1},\dots,j_{\varsigma}) \in ((\Z/m\Z)^{*})^{\varsigma} \\ \sum_{\lambda=1}^{\varsigma}j_{\lambda}\equiv 0 \pmod m }} m \left(\prod_{k \in (\Z/m\Z)^{*}} x_{\ell,k}^{\sum_{\lambda=1}^{\varsigma}r_{m}(j_{\lambda}/k)}\right).
$$
Comme, pour tout $k \in (\Z/m\Z)^{*}$, on a
\begin{equation}
\sum_{\lambda=1}^{\varsigma}r_{m}(j_{\lambda}/k)\equiv \left( \sum_{\lambda=1}^{\varsigma}j_{\lambda} \right)/k \equiv 0 \pmod m,
\label{rr}
\end{equation} les termes de $q_{\ell,\varsigma}$ appartiennent bien \`{a}  $ \mathcal{O}[\xi]$.

Par cons\'{e}quent, les $s_{\ell,\iota}$ ($\iota \in \llbracket 1,m \rrbracket$) sont dans $\mathcal{O}[\xi]$ en utilisant la relation \eqref{idnew}.

De plus, par le lemme \ref{unicycl}, tout $\omega \in \Gal(L(\xi)/L)$ se rel\`{e}ve en un unique $\widetilde{\omega} \in \Gal(M_\ell/L)$ tel que, pour tout $k \in (\Z/m\Z)^{*}$, on ait $\widetilde{\omega}(x_{\ell,k})=x_{\ell,v(\omega)k}$, o\`{u} $v(\omega) $ est l'unique \'{e}l\'{e}ment de $(\Z/m\Z)^{*}$ v\'{e}rifiant $\omega(\xi)=\xi^{v(\omega)}$. Tout $\widetilde{\omega}$ de cette forme fixe les $z_{\ell,l}$ car, pour tout $l \in \llbracket 1,m \rrbracket$, on a
{\everymath={\displaystyle}
$$
\begin{array}{lllll}
\widetilde{\omega}(z_{\ell,l})&= &\widetilde{\omega}\bigg(\sum_{j \in (\Z/m\Z)^{*}}\xi^{lj}\prod_{k \in (\Z/m\Z)^{*}}x_{\ell,k}^{r_{m}(j/k)}\bigg) &= & \sum_{j \in (\Z/m\Z)^{*}}\xi^{ljv(\omega)}\prod_{k \in (\Z/m\Z)^{*}}x_{\ell,v(\omega)k}^{r_{m}(j/k)}\\
	&&&=&\sum_{j \in (\Z/m\Z)^{*}}\xi^{lj}\prod_{k \in (\Z/m\Z)^{*}}x_{\ell,k}^{r_{m}(j/k)}\\
	&&&=&z_{\ell, l}.
\end{array} 
$$}

\noindent
Des r\'{e}sultats pr\'{e}c\'{e}dents, on d\'{e}duit que les $s_{\ell,\iota}$ sont dans $\mathcal{O}$.

Montrons maintenant que $h_{\ell}(X)$ est irr\'{e}ductible sur $\oQ L$. Pour cela, il suffit de v\'{e}rifier que c'est un polyn\^{o}me d'Eisenstein pour l'id\'{e}al premier $\mathcal{P}_{\ell,1}$. En utilisant l'\'{e}quation \eqref{rr}, on voit que $q_{\ell,\varsigma} \in \mathcal{P}_{\ell,1}$ pour tout $\varsigma$.  Par cons\'{e}quent, en utilisant l'identit\'{e} \eqref{idnew}, on a $s_{\ell,\iota} \in \mathcal{P}_{\ell,1}$ pour tout $\iota \in \llbracket 1,m\rrbracket$. Donc, il reste \`{a} montrer que $s_{\ell,m} \notin \mathcal{P}_{\ell,1}^{2}$. En utilisant l'identit\'{e} \eqref{idnew} avec $\varsigma=m$ et le fait que $s_{\ell,\iota}q_{\ell,m-\iota} \in\mathcal{P}_{\ell,1}^{2}$ pour tout $\iota \in \llbracket 1,m-1 \rrbracket$, il suffit de v\'{e}rifier que $ q_{\ell,m} \notin \mathcal{P}_{\ell,1}^{2}$.

Pour cela, notons $v$ la valuation de $\oQ L$ associ\'{e}e \`{a} $\mathcal{P}_{\ell,1}$. Pour tout $(j_1, \dots, j_m) \in ((\Z/m\Z)^{*})^{m}$ tel que $\sum_{\lambda=1}^{m}j_{\lambda}\equiv 0 \pmod m$, on a 
\[
v \bigg( x_{\ell,k}^{\sum_{\lambda=1}^{m}r_{m}(j_{\lambda}/k)}\bigg)=0
\] pour tout $k \in (\Z/m\Z)^{*} \setminus \{1 \}$ par le lemme \ref{ramggg}. Par conséquent, on a
\begin{align*}
v\left(m \bigg(\prod_{k \in (\Z/m\Z)^{*}} x_{\ell,k}^{\sum_{\lambda=1}^{m}r_{m}(j_{\lambda}/k)}\bigg)\right)&=v\bigg(x_{\ell,1}^{\sum_{\lambda=1}^{m}r_{m}(j_{\lambda})}\bigg)\\
&=v\left(\Big( T+1+(-1)^{\ell}\sqrt{T^{2}+1}-\xi\Big)^{\frac{\sum_{\lambda=1}^{m}r_{m}(j_{\lambda})}{m}}\right)\\
&=\frac{1}{m}\Big(\sum_{\lambda=1}^{m}r_{m}(j_{\lambda})\Big).
\end{align*}
En remarquant que la valuation ci-dessus est \'{e}gale \`{a} $1$ si $(j_1, \dots, j_m)=(1,\dots,1)$ et sup\'{e}rieure ou \'{e}gale \`{a} $2$ sinon, on obtient $v(q_{\ell,m})=1$, ce qui achève la démonstration du a).
\vskip 2mm
\noindent
b) Pour tout $j \in (\Z/m\Z)^{*}$, l'extension $L(\xi,y_{\ell,j})/L(\xi)$ est cyclique de degr\'{e} $m$. En effet, $X^{m}-\prod_{k \in (\Z/m\Z)^{*}}x_{\ell,k}^{mr_{m}(j/k)} \in L(\xi)[X]$ annule $y_{\ell,j}$ et est d'Eisenstein pour l'id\'{e}al premier $\mathcal{P}_{\ell,j}$. On conclut donc par la th\'{e}orie de Kummer.

Maintenant, si $F_{\ell}$ désigne le compositum des $L(\xi,y_{\ell,j})$ ($j\in (\Z/m\Z)^{*}$), on a $F_{\ell}=L(\xi,y_{\ell,1})$. En effet, soit $j \in (\Z/m\Z)^{*}$. Pour tout $k \in (\Z/m\Z)^{*}$, il existe $o_{j,k} \in m\Z$ tel que $ r_{m}(j)r_{m}(1/k)=r_{m}(j/k)+o_{j,k}$.
On a alors \[y_{\ell,1}^{r_{m}(j)}=y_{\ell,j}\bigg(\prod_{k \in (\Z/m\Z)^{*}}\bigg(T+1+(-1)^{\ell}\sqrt{T^{2}+1}-\xi^{k}\bigg)^{o_{j,k}/m}\bigg),
\] donc $L(\xi,y_{\ell,j}) \subset L(\xi,y_{\ell,1})$. Par cons\'{e}quent, on a $F_{\ell}=L(\xi,y_{\ell,1})$.

De plus, $F_{\ell}/L$ est galoisienne. En effet, $L(\xi)/L$ est clairement galoisienne et, d'après les deux paragraphes précédents, $F_\ell/L(\xi)$ l'est aussi. Par conséquent, il suffit de montrer que tout élément de $\Gal(L(\xi)/L)$ se relève en un élément de $\Aut(F_\ell/L)$. Fixons pour cela $\omega \in \Gal(L(\xi)/L)$. Notons comme précédemment $\widetilde{\omega}$ l'unique relèvement de $\omega$ à $M_\ell$ tel que, pour tout $k \in (\Z/m\Z)^{*}$, on ait $\widetilde{\omega}(x_{\ell,k})=x_{\ell,v(\omega)k}$, o\`{u} $v(\omega) $ est l'unique \'{e}l\'{e}ment de $(\Z/m\Z)^{*}$ v\'{e}rifiant $\omega(\xi)=\xi^{v(\omega)}$. Pour tout $j \in (\Z/m\Z)^{*}$, on a $\widetilde{\omega}(y_{\ell,j})=y_{\ell,v(\omega)j}$, ce qui montre que la restriction de $\widetilde{\omega}$ à $F_\ell$ est bien un élément de $\Aut(F_\ell/L)$.

En outre, $N_{\ell}/L$ est cyclique de degr\'{e} $m$. En effet, d'après le a), $N_{\ell}/L$ est de degr\'{e} $m$. Notons $\Omega=\{ \res^{M/L}_{F_\ell/L}(\widetilde{\omega})\, \mid \, \omega \in \Gal(L(\xi)/L) \}$, o\`{u} $\widetilde{\omega}$ est d\'{e}fini dans le paragraphe précédent . Vu que $v(\omega \omega')=v(\omega)v(\omega')$ pour tous $\omega,\omega' \in  \Gal(L(\xi)/L)$, l'ensemble $\Omega$ est un sous-groupe de $\Gal(F_\ell/L)$. Par restriction, $\Omega$ est isomorphe  \`{a} $\Gal(L(\xi)/L)$. De plus, la preuve du a) montre que $\Omega$ fixe chaque élément du sous-corps $N_{\ell}$ de $F_{\ell}$. Pour des raisons de degrés, on a forc\'{e}ment $N_{\ell}=F_{\ell}^{\Omega}$. De plus, par le a), les $L$-conjugu\'{e}s de $z_{\ell,1}$ sont les $z_{\ell,l}$ ($l \in \llbracket 1 , m \rrbracket$). Ils sont fix\'{e}s par $\Omega$ par la preuve du a), donc ils sont dans $N_{\ell}$. On en d\'{e}duit que $N_{\ell}/L$ est galoisienne de degr\'{e} $m$. On remarque enfin que $L(\xi) \cap N_{\ell}=L(\xi) \cap F_{\ell}^{\Omega}=L(\xi)^{\Gal(L(\xi)/L)}=L$ et donc $F_{\ell}=N_{\ell}(\xi)$. On en d\'{e}duit $\Gal(N_{\ell}/L) \simeq \Gal(F_{\ell}/L(\xi)) \simeq \Z/m\Z$.

Enfin, $\gamma_{\ell}$ d\'{e}finit bien un g\'{e}n\'{e}rateur de $\Gal(N_{\ell}/L)$. En effet, avec les notations du lemme \ref{unicycl}, soit $\omega$ l'unique élément de $\Gal(M_\ell/L)$ défini par $v(\omega)=1$, $s_{1}=1$ et $s_{k}=0$ pour tout $k \in (\Z/m\Z)^{*} \setminus \{1\}$.  On v\'{e}rifie que $\omega (z_{\ell,l})=z_{\ell,l+1}$ pour tout $l \in \llbracket 1, m \rrbracket$. Ainsi $\gamma_{\ell}=\res^{M_\ell/L}_{N_{\ell}/L}(\omega)$ est un élément de $\Gal(N_{\ell}/L)$ d'ordre $m$. 
\end{proof}
Nous relevons maintenant le générateur $\beta$ de $\Gal(L/\Q(T))$ en un élément $\chi$ de $\Aut(M_1M_2/\Q(T))$.
\begin{lemma}\label{lemcrt}
Il existe un automorphisme $\chi$ de $M_1 M_2$ prolongeant $\beta$, fixant $\xi$ et tel que $\chi(x_{2,k})=x_{1,k}$ et $\chi(x_{1,k})=x_{2,k}$ pour tout $k \in (\Z/m\Z)^{*}$. En conséquence, $\chi(z_{1,l})=z_{2,l}$ et $\chi(z_{2,l})=z_{1,l}$ pour tout $l \in \llbracket 1 , m \rrbracket$. De plus, l'extension $M_1M_2/\Q(T)$ est galoisienne.
\end{lemma}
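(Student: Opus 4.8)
Le plan est de traiter séparément la construction de l'automorphisme $\chi$ (les formules pour $\chi(z_{\ell,l})$ en découlant immédiatement) et le fait que $M_1M_2/\Q(T)$ soit galoisienne. D'abord, je prolongerais $\beta$ en un automorphisme $\tilde\beta$ de $L(\xi)$ fixant $\xi$ : comme $L/\Q$ est régulière, $L/\Q(T)$ et $\Q(T,\xi)/\Q(T)$ sont linéairement disjointes, de sorte que $\Gal(L(\xi)/\Q(T))=\Gal(L/\Q(T))\times\Gal(\Q(T,\xi)/\Q(T))$ et $\beta$ se relève en $\tilde\beta\in\Gal(L(\xi)/\Q(T))$ restreignant à $\beta$ sur $L$ et à l'identité sur $\Q(T,\xi)$. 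En posant $c_{\ell,k}=x_{\ell,k}^{m}=T+1+(-1)^{\ell}\sqrt{T^{2}+1}-\xi^{k}\in L(\xi)$, la remarque élémentaire cruciale est que $\tilde\beta(c_{\ell,k})=c_{3-\ell,k}$, puisque $\tilde\beta$ change $\sqrt{T^{2}+1}$ en son opposé tout en fixant $T$ et $\xi$.

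Ensuite, j'utiliserais le lemme \ref{unicycl} a) : les extensions $L(\xi,x_{\ell,k})/L(\xi)$ (pour $\ell\in\{1,2\}$ et $k\in(\Z/m\Z)^{*}$) sont cycliques de degré $m$ et linéairement disjointes dans leur ensemble, donc $[M_1M_2:L(\xi)]=m^{2\varphi(m)}$ et la surjection naturelle de $L(\xi)$-algèbres $L(\xi)[Y_{\ell,k}\,;\,\ell,k]/\bigl(Y_{\ell,k}^{m}-c_{\ell,k}\bigr)\to M_1M_2$, $Y_{\ell,k}\mapsto x_{\ell,k}$, est en fait un isomorphisme (les deux membres étant de dimension $m^{2\varphi(m)}$ sur $L(\xi)$). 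Sur l'algèbre de polynômes $L(\xi)[Y_{\ell,k}]$, l'automorphisme d'anneau qui prolonge $\tilde\beta$ et échange $Y_{1,k}\leftrightarrow Y_{2,k}$ respecte l'idéal de définition — précisément parce que $\tilde\beta(c_{\ell,k})=c_{3-\ell,k}$ — donc passe au quotient et se transporte, via l'isomorphisme ci-dessus, en un automorphisme $\chi$ de $M_1M_2$ vérifiant $\chi|_{L}=\beta$, $\chi(\xi)=\xi$, $\chi(x_{1,k})=x_{2,k}$ et $\chi(x_{2,k})=x_{1,k}$. En appliquant $\chi$ terme à terme à $z_{\ell,l}=\sum_{j\in(\Z/m\Z)^{*}}\xi^{lj}\prod_{k\in(\Z/m\Z)^{*}}x_{\ell,k}^{r_{m}(j/k)}$, et en n'utilisant que $\chi(\xi)=\xi$ et l'échange des $x$, on obtient $\chi(z_{1,l})=z_{2,l}$ et $\chi(z_{2,l})=z_{1,l}$ pour tout $l\in\llbracket 1,m\rrbracket$.

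Pour la dernière assertion, je remarquerais que $M_1M_2/L$ est galoisienne, étant le compositum des extensions galoisiennes $M_1/L$ et $M_2/L$ (lemme \ref{unicycl} b)), tandis que $L/\Q(T)$ est galoisienne avec $\Gal(L/\Q(T))=\{\mathrm{Id},\beta\}$. Comme les deux éléments de $\Gal(L/\Q(T))$ se prolongent en des automorphismes de $M_1M_2$ — l'identité trivialement, $\beta$ via le $\chi$ construit ci-dessus — le critère usuel pour une tour d'extensions normales montre que $M_1M_2/\Q(T)$ est normale, donc galoisienne (on est en caractéristique $0$). On peut aussi éviter $\chi$ ici : l'identité $(X^{m}-x_{1,k}^{m})(X^{m}-x_{2,k}^{m})=(X^{m}-T-1+\xi^{k})^{2}-(T^{2}+1)$ montre que $M_1M_2$ est le corps de décomposition sur $\Q(T)$ de $\prod_{k\in(\Z/m\Z)^{*}}\bigl((X^{m}-T-1+\xi^{k})^{2}-(T^{2}+1)\bigr)$, polynôme qui appartient à $\Q(T)[X]$ car il est symétrique en les racines primitives $m$-ièmes de l'unité.

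L'obstacle principal que je prévois est la gestion, à l'étape centrale, de la présentation de $M_1M_2$ sur $L(\xi)$ : il faut vérifier soigneusement que les relations $Y_{\ell,k}^{m}=c_{\ell,k}$ sont les seules — c'est exactement ce que fournit la disjonction linéaire globale du lemme \ref{unicycl} a), via le comptage de dimensions — et que l'échange est compatible avec $\tilde\beta$ grâce à l'identité $\tilde\beta(c_{\ell,k})=c_{3-\ell,k}$. Une fois ces deux points acquis, la construction de $\chi$ et les vérifications restantes sont de routine.
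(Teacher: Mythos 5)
Your proof is correct, and although it rests on the same key input as the paper --- lemme \ref{unicycl} a), i.e.\ the fact that the only relations among the $x_{\ell,k}$ over $L(\xi)$ are the Kummer relations, combined with the observation that $\beta$ exchanges $x_{1,k}^m$ and $x_{2,k}^m$ --- the mechanism is genuinely different. The paper extends $\beta$ in successive steps: first to $L(\xi)$ fixing $\xi$, then to an isomorphism $M_2\rightarrow M_1$ sending $x_{2,k}$ to $x_{1,k}$, then to an automorphism $\chi$ of $M_1M_2$ sending $x_{1,k}$ to $x_{2,k}$, each step being justified by identifying the minimal polynomial of the relevant $x_{\ell,k}$ over the compositum of the previously adjoined fields (this is where the linear disjointness of lemme \ref{unicycl} enters). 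You instead present $M_1M_2$ as the $L(\xi)$-algebra $L(\xi)[Y_{\ell,k}]/(Y_{\ell,k}^m-c_{\ell,k})$, the identification being an isomorphism by the dimension count $[M_1M_2:L(\xi)]=m^{2\varphi(m)}$ furnished by the same lemma, and you obtain $\chi$ by descending the $\tilde\beta$-semilinear automorphism of the polynomial ring exchanging $Y_{1,k}$ and $Y_{2,k}$, which is legitimate since $\tilde\beta(c_{\ell,k})=c_{3-\ell,k}$; this is a clean repackaging that avoids the stepwise embedding-extension bookkeeping. You also address the final assertion (that $M_1M_2/\Q(T)$ is Galois) explicitly, whereas the paper's proof leaves it implicit; both of your arguments are valid --- the tower criterion (every element of $\Gal(L/\Q(T))$ extends to $M_1M_2$ and $M_1M_2/L$ is Galois), and the more self-contained exhibition of $M_1M_2$ as the splitting field over $\Q(T)$ of $\prod_{k}\bigl((X^m-T-1+\xi^k)^2-(T^2+1)\bigr)\in\Q(T)[X]$.
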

\begin{proof}[Preuve]
On remarque d'abord que l'on peut \'{e}tendre $\beta$ en un automorphisme de $L(\xi)$ fixant $\xi$.
Ensuite, on étend $\beta$ en un isomorphisme $M_2 \rightarrow M_1$ tel que $\beta(x_{2,k})=x_{1,k}$ pour tout $k \in (\Z/m\Z)^{*}$. En effet, pour tout $k \in (\Z/m\Z)^{*}$, on a $(X^m-(T+1+\sqrt{T^{2}+1}-\xi^{k}))^{\beta}= X^m-(T+1-\sqrt{T^{2}+1}-\xi^{k})$ et le polyn\^{o}me minimal de $x_{2,k}$ (resp. $x_{1,k}$) sur le compositum des corps $L(\xi,x_{2,k'})$ (resp. $L(\xi,x_{1,k'})$), pour $k'  \in (\Z/m\Z)^* \setminus \{k\}$, est $X^m-(T+1+\sqrt{T^{2}+1}-\xi^{k})$ (resp. $X^m-(T+1-\sqrt{T^{2}+1}-\xi^{k})$), d'après le lemme \ref{unicycl}. 
Enfin, on \'{e}tend $\beta$ en un automorphisme $\chi$ de $M_1M_2$ tel que $\chi(x_{1,k})=x_{2,k}$ pour tout $k \in (\Z/m\Z)^{*}$. En effet, pour tout $k \in (\Z/m\Z)^{*}$, on a $(X^m-(T+1-\sqrt{T^{2}+1}-\xi^{k}))^{\beta}= X^m-(T+1+\sqrt{T^{2}+1}-\xi^{k})$ et le polyn\^{o}me minimal de $x_{1,k}$ (resp. $x_{2,k}$) sur le compositum des corps $M_2$ et $L(\xi,x_{1,k'})$ (resp. $M_1$ et $L(\xi,x_{2,k'})$), pour $k'  \in (\Z/m\Z)^* \setminus \{k\}$, est $X^m-(T+1-\sqrt{T^{2}+1}-\xi^{k})$ (resp. $X^m-(T+1+\sqrt{T^{2}+1}-\xi^{k})$). 
\end{proof}

Nous déterminons enfin le groupe de Galois de $F/\Q(T)$.
\begin{proposition}\label{pdcour} 
a) L'extension $F/\Q(T)$ est galoisienne et $\Q$-régulière.
\vskip 0.5mm
\noindent
b) On a $
\Gal(F/L)= \{ \mathcal{L}_{a,b} \mid (a,b) \in (\Z/m\Z)^{2} \},
$ où  $\mathcal{L}_{a,b}(z_{1,l})=z_{1,l+a}$ et $\mathcal{L}_{a,b}(z_{2,l})=z_{2,l+b}$ pour tout $(a,b) \in \Z/m\Z \times \Z/m\Z$ et tout $l \in \N$.
\vskip 0.5mm
\noindent
c) Il existe $g \in \Gal(F/\Q(T))$, d'ordre $2$, vérifiant $\res^{F/\Q(T)}_{L/\Q(T)}(g)=\beta$ et $g(z_{1,l})=z_{2,l}$ (resp.
$g(z_{2,l})=z_{1,l}$) pour tout $l \in \llbracket 1 ,m \rrbracket $. De plus, on a \begin{equation}\label{descwfom}
\Gal(F/\Q(T))= \{ \mathcal{L}_{a,b} \circ g^{\epsilon} \mid (a,b) \in (\Z/m\Z)^2 \, , \epsilon \in \{0,1\} \},
\end{equation}
qui est isomorphe \`a $\Z/m\Z \wr \Z/2\Z$ via
$$\psi : \left \{ \begin{array} {ccc}
\Z/m\Z \wr \Z/2\Z& \rightarrow & \Gal(F/\Q(T))  \\
((a,b),\epsilon) & \mapsto & \mathcal{L}_{a,b} \circ g^{\epsilon} \\
\end{array} \right..$$
\end{proposition}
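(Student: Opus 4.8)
The plan is to bootstrap from the structural results of Proposition~\ref{thcycl} and Lemma~\ref{lemcrt}: once the behaviour of $\chi$ on $F$ is understood, a), b), c) follow by Galois-theoretic bookkeeping, the single substantial point being the linear disjointness of $N_1$ and $N_2$ over $L$. I first prove that $F/\Q(T)$ is Galois. Since $F=N_1N_2$ is a compositum of the Galois extensions $N_\ell/L$ of Proposition~\ref{thcycl}, $F/L$ is Galois, so $F$ is stable under $\Gal(M_1M_2/L)$. Moreover $F$ is stable under the automorphism $\chi$ of Lemma~\ref{lemcrt}: indeed $\chi(L)=L$ and $\chi$ exchanges $z_{1,1}$ and $z_{2,1}$, whence $\chi(F)=\chi(L)(z_{2,1},z_{1,1})=F$. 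As $[L:\Q(T)]=2$ and $M_1M_2/\Q(T)$ is Galois, $\Gal(M_1M_2/\Q(T))$ is the union of $\Gal(M_1M_2/L)$ and the coset $\chi\,\Gal(M_1M_2/L)$; hence $F$ is $\Gal(M_1M_2/\Q(T))$-stable and $F/\Q(T)$ is Galois.

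Next I handle $\Q$-regularity together with the linear disjointness of $N_1$ and $N_2$ over $L$. Each $z_{\ell,1}$ lies in $M_\ell$, so $N_\ell\oQ\subseteq M_\ell\oQ$; by Lemma~\ref{unicycl}a the extensions $\oQ M_1/\oQ L$ and $\oQ M_2/\oQ L$ are linearly disjoint, hence $N_1\oQ\cap N_2\oQ=\oQ L$. Since $h_\ell(X)$ is irreducible over $\oQ L$ (Proposition~\ref{thcycl}a) and splits in $N_\ell$, the extension $N_\ell\oQ/\oQ L$ is Galois of degree $m$; linear disjointness then gives $[F\oQ:\oQ L]=m^2$. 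On the other hand $[F\oQ:\oQ L]\le[F:L]\le[N_1:L][N_2:L]=m^2$, so equality holds throughout: $[F:L]=m^2=[F\oQ:\oQ L]$ and $N_1,N_2$ are linearly disjoint over $L$. Since $L/\Q$ is regular (so $[\oQ L:\oQ(T)]=2=[L:\Q(T)]$), this yields $[F:\Q(T)]=2m^2=[F\oQ:\oQ(T)]$, i.e. $F/\Q$ is regular; this settles a).

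For b), the linear disjointness just proved makes restriction an isomorphism $\Gal(F/L)\xrightarrow{\ \sim\ }\Gal(N_1/L)\times\Gal(N_2/L)$, and each $\Gal(N_\ell/L)=\langle\gamma_\ell\rangle$ is cyclic of order $m$ with $\gamma_\ell^{\,c}(z_{\ell,l})=z_{\ell,l+c}$ (Proposition~\ref{thcycl}b); so for $(a,b)\in(\Z/m\Z)^2$ one defines $\mathcal{L}_{a,b}\in\Gal(F/L)$ as the unique element restricting to $(\gamma_1^a,\gamma_2^b)$, it acts as claimed on the $z_{\ell,l}$, and these $m^2$ elements exhaust $\Gal(F/L)$. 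For c), put $g=\chi|_F$ (legitimate since $F$ is $\chi$-stable): then $g\in\Gal(F/\Q(T))$, $\res^{F/\Q(T)}_{L/\Q(T)}(g)=\beta$, and $g(z_{1,l})=z_{2,l}$, $g(z_{2,l})=z_{1,l}$ by Lemma~\ref{lemcrt}. As $g^2$ fixes $L$ and $z_{1,1},z_{2,1}$ it is the identity, while $g\ne\mathrm{id}$ because $g|_L=\beta$, so $o(g)=2$. Since $\Gal(F/L)$ has index $2$ in $\Gal(F/\Q(T))$ and $g\notin\Gal(F/L)$, the $2m^2$ elements $\mathcal{L}_{a,b}\circ g^{\epsilon}$ form $\Gal(F/\Q(T))$, which is \eqref{descwfom}. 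Finally $\psi$ is bijective by cardinality, so it remains to check it is a homomorphism; this reduces to the relations $\mathcal{L}_{a,b}\circ\mathcal{L}_{a',b'}=\mathcal{L}_{a+a',b+b'}$, $g\circ g=\mathcal{L}_{0,0}$ and $g\circ\mathcal{L}_{a',b'}\circ g^{-1}=\mathcal{L}_{b',a'}$, each verified at once on the $z_{\ell,l}$ and on $L$, which match the multiplication of $\Z/m\Z\wr\Z/2\Z$ (where $1\in\Z/2\Z$ swaps the two factors).

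The only genuinely delicate step is the linear disjointness of $N_1$ and $N_2$ over $L$ — needed both for the regularity in a) and for the identification of $\Gal(F/L)$ in b) — and it is precisely there that the disjoint ramification built into the construction, as recorded in Lemma~\ref{unicycl}a, does the work; the remaining arguments are routine Galois theory and a short computation in the wreath product.
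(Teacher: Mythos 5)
Your proof is correct and follows essentially the same route as the paper's: linear disjointness of $N_1\oQ$ and $N_2\oQ$ over $\oQ L$ via Lemma \ref{unicycl} and Proposition \ref{thcycl} gives $[F\oQ:\oQ(T)]=2m^2$, hence regularity and $\Gal(F/L)\simeq\Z/m\Z\times\Z/m\Z$, while $g=\chi|_F$ from Lemma \ref{lemcrt} supplies the order-two complement and the conjugation relation $g\mathcal{L}_{a,b}g^{-1}=\mathcal{L}_{b,a}$ identifying the wreath product. The only (harmless) difference is presentational: you establish Galoisness of $F/\Q(T)$ first via stability of $F$ under $\Gal(M_1M_2/\Q(T))$, where the paper first computes degrees over $\oQ$ and then invokes the restriction of $\chi$.
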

\begin{proof}[Preuve] 
a) Montrons tout d'abord que l'on a $F \cap \overline{\Q}=\Q$. Pour cela, notons que $M_1 \oQ$ et $M_2 \oQ$ sont linéairement disjoints sur $L \oQ$ en vertu du lemme \ref{unicycl}. En particulier, $N_{1} \oQ$ et $N_{2} \oQ$ le sont également. On a donc $[N_{1} N_{2} \oQ : L \oQ]=m^2$ par la proposition \ref{thcycl}. Ainsi, on a $[N_{1} N_{2} \oQ :  \oQ(T)]=2m^2$, ce qui montre bien $F \cap \overline{\Q}=\Q$ puisque $[F : \Q(T)] \leq 2m^2$. Au passage, on a montré que $N_{1}$ et $N_{2}$ étaient linéairement disjoints sur $L$. Ensuite, notons $\chi$ un automorphisme de $M_1M_2$ comme dans le lemme \ref{lemcrt}. On a $\chi(z_{1,l})=z_{2,l}$ et $\chi(z_{2,l})=z_{1,l}$ pour tout $l \in \N$, donc la restriction de $\chi$ à $F$, que l'on note $g$, est un automorphisme de $F$. Ainsi, $F/\Q(T)$ est galoisienne.  
\vskip 0.5 mm
\noindent
b) Soient $\gamma_{1}$ et $\gamma_{2}$ les g\'{e}n\'{e}rateurs de $\Gal(N_{1}/L)$ et $\Gal(N_{2}/L)$ de la proposition \ref{thcycl}. Puisque $N_{1}$ et $N_{2}$ sont linéairement disjoints sur $L$, on obtient un isomorphisme
$$\varphi : \left \{ \begin{array} {ccc}
\Z/m\Z \times \Z/m\Z& \rightarrow & \Gal(F/L)  \\
(a,b) & \mapsto & (\gamma_{1}^{ a},\gamma_{2}^{b})=\mathcal{L}_{a,b} \\
\end{array} \right..$$
c) Tout d'abord, on a 
\begin{equation}\label{equa11}
\Gal(F /\Q(T))=\Gal(F/L)  \rtimes \langle g \rangle.
\end{equation} En effet, notons que $\Gal(F/L)$ est bien un sous-groupe distingué de $\Gal(F /\Q(T))$ et que l'on a $\Gal(F/L)  \cap \langle g \rangle = \{{\rm{Id}}_{F}\}$ puisque $g$ est d'ordre 2 et prolonge $\beta$. Le b) donne alors \eqref{descwfom}.

Considérons maintenant l'application $\psi$ de la proposition. Par le b) et l'égalité \eqref{equa11}, l'application $\psi$ est bijective. Pour montrer que $\psi$ est un isomorphisme, il suffit de v\'{e}rifier que $\varphi$ pr\'{e}serve l'action de $\Z/2\Z$. A cet effet, pour $ (a,b) \in \Z/m\Z \times \Z/m\Z$, on a
$$ \begin{array} {ccccccc}
g \circ \varphi(a,b) \circ g^{-1}(z_{1,1}) &=& g \circ \varphi(a,b)(z_{2,1})
										   & = &g (\gamma_{2}^{b}(z_{2,1})) 	&				   = & g (z_{2,1+b}) \\
				&&& & &
										   = &  z_{1,1+b}\\
										  && &&& = &\gamma_{1}^{b}(z_{1,1})\\
										  && && &= &\varphi(b,a)(z_{1,1}) \\
										   &&&&& = &\varphi((a,b)^{\overline{1}})(z_{1,1})
\end{array}$$
et, de m\^eme, $g \circ \varphi(a,b) \circ g^{-1}(z_{2,1})=\varphi((a,b)^{\overline{1}})(z_{2,1})$.
\end{proof}

\subsection{Résultat principal}\label{real2} 
\begin{theorem}\label{princ}
On se donne un produit semi-direct $\Z/m\Z \rtimes \Z/2\Z$, uniquement déterminé par l'image $d$ de $1 \in \Z/2\Z$ dans $\Aut(\Z/m\Z)=(\Z/m\Z)^{*}$. Pour tout $\delta \in \llbracket 1 ,m \rrbracket$, posons \[
v_{\delta}(d)=\sum_{l=1}^{m}z_{2,l}z_{1,-dl+\delta}.
\] 
\vskip 0.5 mm
\noindent
a) L'extension $E_{d}/\Q(T)=\Q(T,\sqrt{T^2+1}+v_{0}(d))/\Q(T)$ est une $(\Z/m\Z \rtimes \Z/2\Z)$-extension $\Q$-r\'{e}guli\`{e}re. De plus,  on a $E_{d}=L(v_0(d))$ et les $\Q(T)$-conjugu\'{e}s de $\sqrt{T^2+1}+v_{0}(d)$ sont les
\[
(-1)^{\epsilon}\sqrt{T^{2}+1}+v_{\delta}(d^{(-1)^\epsilon}), \quad  (\delta,\epsilon) \in \llbracket 1 ,m \rrbracket \times \{0,1\}.  
\]
b) Le groupe $\Gal(E_{d}/L)$ est engendré par un automorphisme $r$ vérifiant  $r(v_{\delta}(d))=v_{\delta+1}(d)$ pour tout $\delta \in \llbracket 1 ,m \rrbracket$.  En posant $s=\res^{F/\Q(T)}_{E_{d}/\Q(T)}(g)$, où $g$ est défini dans la proposition \ref{pdcour}, on a \[\Gal(E_{d}/\Q(T))=\{ r^{\delta} \circ s^{\epsilon} \mid \delta \in \llbracket 1, m \rrbracket \, , \epsilon \in \{0,1 \} \},\] qui est isomorphe à $\Z/m\Z \rtimes \Z/2\Z$ via $(\delta,\epsilon) \mapsto r^{\delta} \circ s^{\epsilon}$.
\end{theorem}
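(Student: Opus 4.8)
The plan is to realize $E_d/\Q(T)$ as the fixed field of a carefully chosen normal subgroup of $\Gal(F/\Q(T))\cong\Z/m\Z\wr\Z/2\Z$, where $F$ is the wreath-product realization of Proposition \ref{pdcour}. Since $d^2=1$, the map $\pi\colon\Z/m\Z\wr\Z/2\Z\to\Z/m\Z\rtimes\Z/2\Z$ given by $((a,b),\epsilon)\mapsto(a+db,\epsilon)$ is a surjective homomorphism (the only nonroutine check, the equality $\pi(((a,b),1)\cdot((a',b'),\epsilon'))=\pi((a,b),1)\pi((a',b'),\epsilon')$, is exactly where $d^2=1$ is used), with kernel $\{((-db,b),0)\mid b\in\Z/m\Z\}$ of order $m$. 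Transporting this through the isomorphism $\psi$ of Proposition \ref{pdcour}, I obtain the normal subgroup $U:=\{\mathcal{L}_{-db,b}\mid b\in\Z/m\Z\}\trianglelefteq\Gal(F/\Q(T))$ with $\Gal(F/\Q(T))/U\cong\Z/m\Z\rtimes\Z/2\Z$; in particular $F^U/\Q(T)$ is a $\Q$-regular $(\Z/m\Z\rtimes\Z/2\Z)$-extension with $[F^U:\Q(T)]=2m$ and $L\subseteq F^U$.

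The key elementary computations are $\mathcal{L}_{a,b}(v_\delta(d))=v_{\delta+a+db}(d)$ and $g(v_\delta(d))=v_{d\delta}(d)$ (both obtained by reindexing the defining sums, using $d^{-1}=d$), together with $\mathcal{L}_{a,b}|_L=\mathrm{id}$ and $g|_L=\beta$, so $g(\sqrt{T^2+1})=-\sqrt{T^2+1}$. The first formula shows every $\mathcal{L}_{-db,b}\in U$ fixes $w:=\sqrt{T^2+1}+v_0(d)$, hence $E_d=\Q(T)(w)\subseteq F^U$. Next, since $F/\Q(T)$ is Galois and $w\in F$, the $\Q(T)$-conjugates of $w$ are the $\tau(w)$ for $\tau\in\Gal(F/\Q(T))=\{\mathcal{L}_{a,b}\circ g^\epsilon\}$; the formulas above show this set is contained in $\{(-1)^\epsilon\sqrt{T^2+1}+v_\delta(d)\mid\delta\in\llbracket1,m\rrbracket,\ \epsilon\in\{0,1\}\}$ — which is the list claimed in the statement, since $d^{(-1)^\epsilon}=d$ — and so has at most $2m$ elements.

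The main obstacle, and where the real work lies, is to show these $2m$ conjugates are \emph{distinct}, equivalently that $\mathrm{Stab}_{\Gal(F/\Q(T))}(w)=U$. Collapsing the double sum defining $v_\delta(d)$ via $\sum_{l=1}^m\xi^{lo}\in\{0,m\}$ gives the closed form $v_\delta(d)=m\sum_{j\in(\Z/m\Z)^\ast}\xi^{\delta j}\,y_{2,dj}\,y_{1,j}$. Over $L\oQ$, each $y_{1,j}$ ($j\in(\Z/m\Z)^\ast$) is an eigenvector of $\Gal(M_1\oQ/L\oQ)\cong(\Z/m\Z)^{\varphi(m)}$ for the character $\omega\mapsto\xi^{\sum_k s_k(\omega)r_m(j/k)}$, where $\omega(x_{1,k})=\xi^{s_k(\omega)}x_{1,k}$; letting $(s_k(\omega))$ run through the standard basis vectors shows these characters are nontrivial and pairwise distinct in $j$. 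Using the linear disjointness of $M_1\oQ$ and $M_2\oQ$ over $L\oQ$ from Lemma \ref{unicycl}, the elements $1$ and $y_{2,dj}y_{1,j}$ ($j\in(\Z/m\Z)^\ast$) then lie in pairwise distinct character-eigenspaces of $\Gal(M_1M_2\oQ/L\oQ)$, hence are $L\oQ$-linearly independent. Since $m\ge3$, this forces $v_\delta(d)=v_{\delta'}(d)$ only when $\delta\equiv\delta'\pmod m$, and $v_{\delta'}(d)-v_\delta(d)\neq\pm2\sqrt{T^2+1}$ for all $\delta,\delta'$; so the $2m$ conjugates are distinct, $[\Q(T)(w):\Q(T)]=2m$, and therefore $E_d=F^U$. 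This yields part (a): $E_d/\Q(T)$ is a $\Q$-regular $(\Z/m\Z\rtimes\Z/2\Z)$-extension (regularity because $E_d\subseteq F$ and $F/\Q$ is regular), and $L\subseteq E_d$ forces $E_d=L(v_0(d))$.

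For part (b), from $U\subseteq\Gal(F/L)$ I get $\Gal(E_d/L)\cong\Gal(F/L)/U\cong\Z/m\Z$, generated by $r:=\mathcal{L}_{1,0}|_{E_d}$, which fixes $L$ and satisfies $r(v_\delta(d))=v_{\delta+1}(d)$; and $s:=\res^{F/\Q(T)}_{E_d/\Q(T)}(g)$ has order $2$, restricts to $\beta$ on $L$ and to $v_\delta(d)\mapsto v_{d\delta}(d)$. Checking $srs^{-1}=r^d$ reduces to evaluating both sides on the generators $\sqrt{T^2+1}$ and $v_0(d)$ of $E_d$, where both fix $\sqrt{T^2+1}$ and send $v_0(d)$ to $v_d(d)$. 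Since $s\notin\langle r\rangle=\Gal(E_d/L)$ we get $\langle r,s\rangle=\Gal(E_d/\Q(T))$, of order $2m$; as $r,s$ satisfy the defining relations of the presentation \eqref{predih}, the map $(\delta,\epsilon)\mapsto r^\delta\circ s^\epsilon$ is an isomorphism $\Z/m\Z\rtimes\Z/2\Z\to\Gal(E_d/\Q(T))$, and in particular $\Gal(E_d/\Q(T))=\{r^\delta\circ s^\epsilon\mid\delta\in\llbracket1,m\rrbracket,\ \epsilon\in\{0,1\}\}$.
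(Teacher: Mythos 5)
Your proposal is correct and follows the paper's global strategy: the same epimorphism $\alpha\colon \Z/m\Z\wr\Z/2\Z\to\Z/m\Z\rtimes\Z/2\Z$, $((a,b),\eta)\mapsto(a+db,\eta)$ (which the paper imports from Fried--Jarden rather than verifying by hand), the same kernel $\{\mathcal{L}_{-db,b}\}$ transported through $\psi$ from Proposition \ref{pdcour}, and the same identification of $E_d$ with the fixed field of that kernel, with the conjugates produced by $\mathcal{L}_{\delta,0}\circ g^{\epsilon}$. Where you genuinely diverge is the key nonvanishing step. The paper proves that the $2m$ conjugates are pairwise distinct by a valuation estimate: expanding $v_\delta(d)-v_t(d)=\sum_j m(\xi^{j\delta}-\xi^{jt})\prod_k x_{2,k}^{r_m(dj/k)}x_{1,k}^{r_m(j/k)}$ and observing that at a prime above $\mathcal{P}_{1,1}$ the $j=1$ term has strictly smallest (positive, finite) valuation, which also handles the case $\epsilon\neq\epsilon'$ via $v(\sqrt{T^2+1})=0$ and Lemme \ref{ramggg}. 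You instead use the same closed form $v_\delta(d)=m\sum_j\xi^{\delta j}y_{2,dj}y_{1,j}$ but conclude by Kummer theory: the $y_{2,dj}y_{1,j}$ and $1$ lie in pairwise distinct character eigenspaces for the abelian group $\Gal(M_1M_2\oQ/L\oQ)$ (using the linear disjointness of Lemme \ref{unicycl}), hence are $L\oQ$-linearly independent, which kills both $v_\delta(d)=v_{\delta'}(d)$ for $\delta\not\equiv\delta'$ and $v_\delta(d)-v_{\delta'}(d)=\pm2\sqrt{T^2+1}$ in one stroke. Your eigenspace argument is cleaner and treats the two cases uniformly; the paper's valuation inequality has the side benefit that it is quoted again (as \eqref{valsui}) in the proof of Theorem \ref{quaternion}, so replacing it there would require a small adaptation, but for the present statement both routes are complete. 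In part b) you additionally verify the relation $s r s^{-1}=r^{d}$ directly on the generators $\sqrt{T^2+1}$ and $v_0(d)$, whereas the paper gets the group structure purely from the quotient by $\psi(\mathrm{Ker}(\alpha))$; either is fine, and your check makes the presentation \eqref{predih} explicit.
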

\begin{proof}[Preuve] 
D'après \cite[Lemma 16.4.3]{FJ08}, l'application suivante est un épimorphisme :
\[
\begin{array}{ccccc}
\alpha &:&\Z/m\Z \wr \Z/2\Z & \rightarrow & \Z/m\Z \rtimes \Z/2\Z     \\
         &  &        \left((a,b),\eta \right)        & \mapsto     &  (a+db,\eta). 
\end{array}
\]
Son noyau est
$
\mathrm{Ker}(\alpha)=\{ (-da,a) \mid a \in \Z/m\Z \}
$. Ci-dessous, on utilise les notations $\mathcal{L}_{a,b}$ ($(a,b) \in \Z/m\Z \times \Z/m\Z$) et $\psi$ de la proposition \ref{pdcour}.

Le sous-corps de $F$ fix\'{e} par $\psi(\mathrm{Ker}(\alpha))$ est $L(v_0(d))$. De plus, $L(v_0(d))/L$ est une $\Z/m\Z$-extension de groupe de Galois engendré par l'automorphisme $r$ vérifiant $r(v_{\delta}(d))=v_{\delta+1}(d)$ pour tout $\delta \in \llbracket 1,m \rrbracket$. En effet, en remarquant que 
$\psi(\mathrm{Ker}(\alpha))=\{ \mathcal{L}_{-da,a} \mid a \in \Z/m\Z \} \subset \Gal(F/L),$ on obtient 
$$\Gal(F^{\psi(\mathrm{Ker}(\alpha))}/L)=\Gal(F/L)/\psi(\mathrm{Ker}(\alpha))=\{ \res^{F/L}_{F^{\psi(\mathrm{Ker}(\alpha))}/L}(\mathcal{L}_{\delta,0})  \mid \delta \in \llbracket 1, m \rrbracket \}=\langle r \rangle,$$ où $r=\res^{F/L}_{F^{\psi(\mathrm{Ker}(\alpha))}/L}(\mathcal{L}_{1,0})$. On voit que
$v_{\delta}(d)=\mathcal{L}_{\delta,0}(v_0(d))$ pour tout $\delta \in \llbracket 1,m \rrbracket$, donc les $v_{\delta}(d)$ sont $L$-conjugués. Il est imm\'{e}diat que l'on a $v_0(d) \in F^{\psi (\mathrm{Ker}(\alpha))}$ et donc $L(v_0(d)) \subset F^{\psi(\mathrm{Ker}(\alpha))} $. Pour l'inclusion inverse, puisque $[F^{\psi(\mathrm{Ker}(\alpha))}:L]=m$, il suffit de montrer que les $v_{\delta}(d)$ sont deux \`{a} deux distincts. Pour cela, soient $\delta \not \equiv t \pmod m$. On a
\begin{align*}
v_{\delta}(d)-v_t(d) &=\sum_{l=1}^{m}z_{2,l}(z_{1,-dl+\delta}-z_{1,-dl+t})\\
		&=\sum_{l=1}^{m}z_{2,l}\left(\sum_{j \in (\Z/m\Z)^{*}}\xi^{-dlj}(\xi^{j\delta}-\xi^{jt})y_{1,j}\right)\\
		&=\sum_{l=1}^{m}\left( \sum_{j^{'} \in (\Z/m\Z)^{*}} \xi^{lj^{'}}y_{2,j'} \right)\left(\sum_{j \in (\Z/m\Z)^{*}}\xi^{-dlj}(\xi^{j\delta}-\xi^{jt})y_{1,j}\right)\\
		&=\sum_{l=1}^{m}\sum_{j,j^{'} \in (\Z/m\Z)^{*}}\xi^{l(-dj+j^{'})}(\xi^{j\delta}-\xi^{jt})\prod_{k \in (\Z/m\Z)^{*}}x_{2,k}^{r_{m}(j^{'}/k)}x_{1,k}^{r_{m}(j/k)}\\
		&=\sum_{j \in (\Z/m\Z)^{*}}m(\xi^{j\delta}-\xi^{jt}) \prod_{k \in (\Z/m\Z)^{*}}x_{2,k}^{r_{m}(dj/k)}x_{1,k}^{r_{m}(j/k)}.
\end{align*}
La derni\`{e}re \'{e}galit\'{e} vient du fait que $\sum_{l=1}^{m}\xi^{lo}$ est \'{e}gal \`{a} $0$ si $o \not \equiv 0 \pmod m$ et $m$ sinon. Soit $\mathcal{P}$ un id\'{e}al premier au dessus de $\mathcal{P}_{1,1}$ dans $\oQ M_1M_2/\oQ L$. Pour $j \neq 1$ (resp. $j=1$), on a $r_{m}(j/1) \geq 2$ (resp. $r_{m}(j/1)=1$). Donc on a
\begin{equation}\label{valsui} 
0 <v(v_{\delta}(d)-v_{t}(d))=v\left(m(\xi^{\delta}-\xi^{t}) \prod_{k \in (\Z/m\Z)^{*}}x_{2,k}^{r_{m}(d/k)}x_{1,k}^{r_{m}(1/k)}\right) < \infty,
 \end{equation} o\`{u} $v$ est la valuation associ\'{e}e \`{a} $\mathcal{P}$. Par cons\'{e}quent, on a $v_{\delta}(d)-v_t(d) \neq 0 $.

Maintenant, on a $E_d=L(v_0(d))$ et $\Gal(E_d/\Q(T))=\{ r^{\delta} \circ s^{\epsilon} \mid \delta \in \llbracket 1, m \rrbracket \, , \epsilon \in \{0,1 \} \}.$ En effet, pour $\delta \in \llbracket 1, m \rrbracket$ et $\epsilon \in \{0,1 \}$, posons $w_{\epsilon,\delta}=(-1)^{\epsilon}\sqrt{T^{2}+1}+v_{\delta}(d^{(-1)^\epsilon})$. Il est déjà clair que l'on a $E_{d}= \Q(T)(w_{0,0}) \subset L(v_0(d))$. De plus, pour tout $\epsilon \in \{0, 1\}$ et tout $\delta \in \llbracket 1, m \rrbracket$, on a $w_{\epsilon, \delta} = \mathcal{L}_{\delta,0} \circ g^{\epsilon}(w_{0,0})$. Il suffit donc de montrer que les $w_{\epsilon,\delta}$ sont deux \`{a} deux distincts. Soient donc $(\epsilon,\delta)$ et $(\epsilon^{'},\delta^{'})$ tels que $w_{\epsilon,\delta}=w_{\epsilon^{'},\delta^{'}}$. Supposons dans un premier temps $\epsilon =  \epsilon'$ et $\delta \neq \delta^{'}$. On a donc $0= v_\delta(d^{(-1)^\epsilon}) - v_{\delta'}(d^{(-1)^\epsilon})$ et l'on aboutit à une contradiction comme dans le paragraphe précédent. Supposons maintenant $\epsilon \not=\epsilon'$. On a alors
$$0= ((-1)^\epsilon - (-1)^{\epsilon'})\sqrt{T^2+1} + v_\delta(d^{(-1)^\epsilon}) - v_{\delta'}(d^{(-1)^{\epsilon'}}).$$
Comme dans le paragraphe précédent, soit $\mathcal{P}$ un idéal premier au dessus de $\mathcal{P}_{1,1}$ dans $\oQ M_1 M_2/\oQ L$. Si $v$ désigne à nouveau la valuation associée à $\mathcal{P}$, on a $v(v_\delta(d^{(-1)^\epsilon}) - v_{\delta'}(d^{(-1)^{\epsilon'}}))>0$. Or $v(\sqrt{T^2+1})=0$. En effet, si $\sqrt{T^2+1}$ était dans $\mathcal{P}$, alors $T-i$ ou $T+i$ le serait aussi. D'après le lemme \ref{ramggg}, $i$ ou $-i$ serait alors égal à $-(1-\xi)/2 + 1/(2(1-\xi))$, ce qui est impossible. Puisque $(-1)^\epsilon - (-1)^{\epsilon'} \not=0$, on en déduit $v(0)=v(w_{\epsilon,\delta}-w_{\epsilon^{'},\delta^{'}})=0$, une contradiction.
\end{proof}

\subsection{Corollaires}

Les trois énoncés ci-dessous s'obtiennent à partir du théorème précédent en considérant successivement les cas suivants :
\begin{enumerate}[1)]
\item $m$ arbitraire et $d=-1$,
\item $m$ égal à une puissance de 2 et $d=(m/2)-1$,
\item $m$ égal à une puissance de 2 et $d=(m/2)+1$.
\end{enumerate}  

\begin{corollary}\label{princ1}
L'extension $\Q(T,\sqrt{T^{2}+1}+\sum_{l=1}^{m}z_{2,l}z_{1,l})$  est une $D_{2m}$-extension $\Q$-r\'{e}guli\`{e}re. De plus, les $\Q(T)$-conjugu\'{e}s de $\sqrt{T^{2}+1}+\sum_{l=1}^{m}z_{2,l}z_{1,l}$ sont les
\[
(-1)^{\epsilon}\sqrt{T^{2}+1}+\sum_{l=1}^{m}z_{2,l}z_{1,l+\delta} ,\quad ( \delta, \epsilon) \in  \llbracket 1, m \rrbracket \times  \{0,1\} .
\]
\end{corollary}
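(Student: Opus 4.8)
The plan is to specialize Theorem \ref{princ} to the case $d=-1$, for which $\Z/m\Z \rtimes \Z/2\Z$ is the dihedral group $D_{2m}$ by the classification recalled in \S\ref{ssec:groupes}. With $d=-1$ one has $d^{(-1)^\epsilon}=d=-1$ for both values of $\epsilon$, and the defining element becomes
\[
v_0(-1)=\sum_{l=1}^{m}z_{2,l}z_{1,-(-1)l}=\sum_{l=1}^{m}z_{2,l}z_{1,l},
\]
so $E_{-1}=\Q(T,\sqrt{T^2+1}+v_0(-1))=\Q(T,\sqrt{T^2+1}+\sum_{l=1}^{m}z_{2,l}z_{1,l})$. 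The claim that this extension is $\Q$-regular with Galois group $D_{2m}$ is then exactly part a) of Theorem \ref{princ} for $d=-1$, read through the identification $\Z/m\Z\rtimes\Z/2\Z\cong D_{2m}$ from \eqref{predih}.

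For the list of $\Q(T)$-conjugates, I would again invoke part a) of Theorem \ref{princ}, which says the conjugates of $\sqrt{T^2+1}+v_0(d)$ are the
\[
(-1)^\epsilon\sqrt{T^2+1}+v_\delta(d^{(-1)^\epsilon}),\qquad (\delta,\epsilon)\in\llbracket 1,m\rrbracket\times\{0,1\}.
\]
Substituting $d=-1$ and using $d^{(-1)^\epsilon}=-1$, one has
\[
v_\delta(-1)=\sum_{l=1}^{m}z_{2,l}z_{1,-(-1)l+\delta}=\sum_{l=1}^{m}z_{2,l}z_{1,l+\delta},
\]
so the conjugates are precisely $(-1)^\epsilon\sqrt{T^2+1}+\sum_{l=1}^{m}z_{2,l}z_{1,l+\delta}$ for $(\delta,\epsilon)\in\llbracket 1,m\rrbracket\times\{0,1\}$, which is the stated list.

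There is essentially no obstacle here: the corollary is a direct specialization, and the only points requiring a line of justification are (i) checking that $d=-1$ indeed satisfies $d^2=1$ in $(\Z/m\Z)^*$ (immediate) so that Theorem \ref{princ} applies, and (ii) recording the elementary rewriting $-dl=l$ and $-dl+\delta=l+\delta$ in the index of $z_{1,\cdot}$. The identification of $\Z/m\Z\rtimes\Z/2\Z$ with $d=-1$ as $D_{2m}$ is part of the setup in \S\ref{ssec:groupes}. Thus the proof amounts to one short paragraph invoking Theorem \ref{princ}.a) with $d=-1$ and performing these substitutions.
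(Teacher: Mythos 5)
Your proposal is correct and matches the paper exactly: the corollary is obtained by specializing Theorem \ref{princ} to $d=-1$ (so that $\Z/m\Z\rtimes\Z/2\Z=D_{2m}$, $d^{(-1)^\epsilon}=-1$, and $v_\delta(-1)=\sum_{l=1}^{m}z_{2,l}z_{1,l+\delta}$), which is precisely how the paper derives it.
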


\begin{corollary}\label{princ2}
Soit $n \geq 3$. L'extension $\Q(T,\sqrt{T^{2}+1}+\sum_{l=1}^{2^{n-1}}z_{2,l}z_{1,-(2^{n-2}-1)l})$ est une $QD_{2^n}$-extension $\Q$-r\'{e}guli\`{e}re et les $\Q(T)$-conjugu\'{e}s de $\sqrt{T^{2}+1}+\sum_{l=1}^{2^{n-1}}z_{2,l}z_{1,-(2^{n-2}-1)l}$ sont les
\[
(-1)^{\epsilon}\sqrt{T^{2}+1}+\sum_{l=1}^{2^{n-1}}z_{2,l}z_{1,-(2^{n-2}-1)^{(-1)^\epsilon}l+\delta}, \quad (\delta,\epsilon) \in \llbracket 1, 2^{n-1} \rrbracket \times  \{0,1\}.
\]
\end{corollary}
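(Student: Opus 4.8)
The plan is to apply Theorem \ref{princ} with $m=2^{n-1}$ and $d=2^{n-2}-1$. First I would check that this choice of $d$ really defines a semidirect product, i.e. that $d^2\equiv 1\pmod{2^{n-1}}$: expanding gives $(2^{n-2}-1)^2=2^{2n-4}-2^{n-1}+1$, and since $2n-4\geq n-1$ for $n\geq 3$, the first two terms lie in $2^{n-1}\Z$, so $d^2\equiv 1\pmod{2^{n-1}}$. By the case analysis recalled in \S\ref{ssec:groupes} (item~2 of the list following \eqref{predih}), the semidirect product $\Z/2^{n-1}\Z\rtimes\Z/2\Z$ attached to this $d$ is exactly the quasi-dihedral group $QD_{2^n}$.

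Then Theorem \ref{princ}(a), applied with these $m$ and $d$, gives directly that $E_d/\Q(T)=\Q(T,\sqrt{T^2+1}+v_0(d))/\Q(T)$ is a $\Q$-regular $QD_{2^n}$-extension, where by definition $v_0(d)=\sum_{l=1}^{2^{n-1}}z_{2,l}z_{1,-(2^{n-2}-1)l}$, which is precisely the generator appearing in the statement. The same theorem describes the $\Q(T)$-conjugates of $\sqrt{T^2+1}+v_0(d)$ as the elements $(-1)^{\epsilon}\sqrt{T^2+1}+v_\delta(d^{(-1)^{\epsilon}})$ for $(\delta,\epsilon)\in\llbracket 1,2^{n-1}\rrbracket\times\{0,1\}$; substituting $d=2^{n-2}-1$ and unwinding $v_\delta(d')=\sum_{l=1}^{2^{n-1}}z_{2,l}z_{1,-d'l+\delta}$ yields exactly the list displayed in the corollary. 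One may note that in fact $d^{-1}\equiv d\pmod{2^{n-1}}$, so the two values $\epsilon=0,1$ give the same inner exponent, but keeping $(2^{n-2}-1)^{(-1)^{\epsilon}}$ in the formula is harmless and matches the uniform notation of Theorem \ref{princ}.

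There is essentially no obstacle here: the entire substance is contained in Theorem \ref{princ}, and the only genuinely new verification is the congruence $d^2\equiv 1\pmod{2^{n-1}}$, a one-line computation. The only point requiring a little care is the index bookkeeping in $-d^{(-1)^{\epsilon}}l+\delta$ when translating between the notation $v_\delta(d^{(-1)^{\epsilon}})$ of the theorem and the explicit sums written in the corollary.
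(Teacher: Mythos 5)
Your proposal is correct and follows exactly the paper's route: the paper derives Corollaire \ref{princ2} by instantiating the théorème \ref{princ} with $m=2^{n-1}$ and $d=2^{n-2}-1$, which is the quasi-dihedral case of \S\ref{ssec:groupes}. Your added verifications (that $d^2\equiv 1\pmod{2^{n-1}}$, that $d$ is a unit, and the unwinding of $v_\delta(d^{(-1)^{\epsilon}})$) are precisely the routine bookkeeping the paper leaves implicit.
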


\begin{corollary}\label{princ3}
Soit $n \geq 3$. L'extension $\Q(T,\sqrt{T^{2}+1}+\sum_{l=1}^{2^{n-1}}z_{2,l}z_{1,-(2^{n-2}+1)l})$ est une $M_{2^n}$-extension $\Q$-r\'{e}guli\`{e}re et les $\Q(T)$-conjugu\'{e}s de $\sqrt{T^{2}+1}+\sum_{l=1}^{2^{n-1}}z_{2,l}z_{1,-(2^{n-2}+1)l}$ sont les
\[
(-1)^{\epsilon}\sqrt{T^{2}+1}+\sum_{l=1}^{2^{n-1}}z_{2,l}z_{1,-(2^{n-2}+1)^{(-1)^\epsilon}l+\delta}, \quad (\delta, \epsilon) \in  \llbracket 1, 2^{n-1} \rrbracket \times \{0,1\}.
\]
\end{corollary}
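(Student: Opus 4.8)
The plan is to obtain Corollary \ref{princ3} as the instantiation of Theorem \ref{princ} for the parameters $m = 2^{n-1}$ and $d = 2^{n-2}+1$, so essentially nothing is needed beyond checking that the hypothesis of Theorem \ref{princ} is satisfied and translating the notation. The one constraint imposed on $d$ in Theorem \ref{princ} is that $d^2 = 1$ in $(\Z/m\Z)^*$, so first I would verify this for $d = 2^{n-2}+1$ modulo $2^{n-1}$: expanding, $(2^{n-2}+1)^2 = 2^{2n-4} + 2^{n-1} + 1$, and since $2n-4 \geq n-1$ for every $n \geq 3$, the first two terms vanish modulo $2^{n-1}$, so $(2^{n-2}+1)^2 \equiv 1 \pmod{2^{n-1}}$. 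I would then recall from \S\ref{ssec:groupes}, case 3), that by definition the semidirect product $\Z/2^{n-1}\Z \rtimes \Z/2\Z$ attached to this value of $d$ is the modular group $M_{2^n}$.

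With $m = 2^{n-1}$ and $d = 2^{n-2}+1$ the quantities of Theorem \ref{princ} become $v_0(d) = \sum_{l=1}^{2^{n-1}} z_{2,l}\, z_{1,-(2^{n-2}+1)l}$ and, more generally, $v_\delta(d) = \sum_{l=1}^{2^{n-1}} z_{2,l}\, z_{1,-(2^{n-2}+1)l+\delta}$, so the field $E_d/\Q(T) = \Q\big(T,\sqrt{T^2+1}+v_0(d)\big)/\Q(T)$ is exactly the extension appearing in the corollary. Theorem \ref{princ}(a) then gives directly that this extension is an $M_{2^n}$-extension which is $\Q$-régulière, and that the $\Q(T)$-conjugates of $\sqrt{T^2+1}+v_0(d)$ are the elements $(-1)^\epsilon\sqrt{T^2+1}+v_\delta\big(d^{(-1)^\epsilon}\big)$ for $(\delta,\epsilon) \in \llbracket 1, 2^{n-1}\rrbracket \times \{0,1\}$. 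Because $d^2 = 1$ forces $d^{-1} = d$, the exponent $d^{(-1)^\epsilon}$ equals $d$ for both values of $\epsilon$ (equivalently, writing $(2^{n-2}+1)^{(-1)^\epsilon}$ as in the statement is legitimate and harmless), so this list coincides with the one claimed; the fact that these conjugates are pairwise distinct is already part of Theorem \ref{princ}, so nothing further is required.

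I do not expect any genuine obstacle: the entire mathematical content is carried by Theorem \ref{princ}, and the only points needing a line of justification are the congruence $d^2 \equiv 1 \pmod{2^{n-1}}$ and the identification of the resulting group with $M_{2^n}$, both of which are immediate. (The companion statements, Corollaries \ref{princ1} and \ref{princ2}, are handled in precisely the same fashion, taking $d = -1$ for arbitrary $m \geq 3$ — where $(-1)^2 = 1$ trivially — and $d = 2^{n-2}-1$ for $m = 2^{n-1}$, respectively; in the latter case one checks as above that $(2^{n-2}-1)^2 = 2^{2n-4} - 2^{n-1} + 1 \equiv 1 \pmod{2^{n-1}}$.)
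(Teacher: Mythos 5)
Your proposal is correct and is exactly the paper's own route: the paper obtains Corollaire \ref{princ3} by specializing Théorème \ref{princ} to $m=2^{n-1}$ and $d=2^{n-2}+1$, just as you do, with the only checks being $d^2\equiv 1 \pmod{2^{n-1}}$ and the identification of the resulting semi-direct product with $M_{2^n}$ via \S\ref{ssec:groupes}.
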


\section{Groupes de quaternions généralisés}

\subsection{Notations}\label{notprin11}
On considère la $\Z/2\Z$-extension $L/\Q(T)=\Q(T)(\sqrt{T^{2}+1})/\Q(T)$. On se donne $n \geq 3$ et on note $\xi=\exp(2 \pi i/2^{n-1})$. Pour tout $k \in (\Z/2^{n-1}\Z)^{*}$, on pose $s_{k}=-(1-\xi^{k})/2+1/(2(1-\xi^{k}))$. Pour tout $k \in (\Z/2^{n-1}\Z)^{*}$ et tout $\ell \in \{1,2\}$, on choisit une racine $2^{n-1}$-i\`{e}me $x_{\ell,k}$ de $T+1+(-1)^{\ell}\sqrt{T^{2}+1}-\xi^{k}$ dans $\overline{\Q(T)}$. Pour $ \ell \in \{1,2\}$, on note $M_\ell$ le compositum des corps $L(\xi, x_{\ell, k})$ $(k \in (\Z/2^{n-1}\Z)^*$). Pour tout $\ell \in \{1,2\}$ et tout $l \in \N$, on pose
\[
z_{\ell,l}=\sum_{j \in (\Z/2^{n-1}\Z)^{*}}\xi^{lj}\prod_{k \in (\Z/2^{n-1}\Z)^{*}}x_{\ell,k}^{r_{2^{n-1}}(j/k)},
\] o\`{u} $r_{2^{n-1}}:(\Z/2^{n-1}\Z)^{*}\rightarrow \llbracket 0, 2^{n-1}-1\rrbracket$ envoie $k \in (\Z/2^{n-1}\Z)^{*}$ sur son unique repr\'{e}sentant modulo $2^{n-1}$. De l'égalité $\xi^{2^{n-2}}=-1=(-1)^{j}$ pour tout entier impair $j$, on déduit
\begin{equation}\label{egalzz}
z_{1,l+2^{n-2}}=\sum_{j \in (\Z/2^{n-1}\Z)^{*}}\xi^{2^{n-2}j}\xi^{lj}\prod_{k \in (\Z/2^{n-1}\Z)^{*}}x_{1,k}^{r_{2^{n-1}}(j/k)}=-z_{1,l}.
\end{equation}
Comme dans le théorème \ref{princ}, on pose $v_{\delta}(-1)=\sum_{l=1}^{2^{n-1}}z_{2,l}z_{1,l+\delta}$ pour tout $\delta \in \llbracket 0, 2^{n-1}-1\rrbracket$. Pour simplifier, on écrira $v_\delta$ au lieu de $v_\delta (-1)$. Les $v_{\delta}$ sont deux à deux distincts et sont conjugués sur $L$ par le théorème \ref{princ}. De plus, pour tout $\delta \in \N$, \eqref{egalzz} entraîne
\begin{equation}\label{ppac}
v_{\delta+2^{n-2}}=-v_{\delta}.
\end{equation} 

On note $E/\Q(T)$ la $D_{2^n}$-extension $\Q$-régulière fournie par le corollaire \ref{princ1}.
\subsection{Réalisations régulières explicites}
Dans cette partie, nous construisons une extension galoisienne $\Q$-régulière explicite de $\Q(T)$ de groupe $Q_{2^n}$ (voir théorème \ref{quaternion}). 
 
Commençons par déterminer $\Gal(HE/\Q(T))$. 
\begin{proposition}\label{Deltann}
 L'extension $HE/\Q(T)$ est une $\Delta_n$-extension $\Q$-régulière. De plus, le groupe $\Gal(HE/\Q(T))$ est engendré par deux éléments $\rho$ et $\sigma$ vérifiant les propriétés suivantes :
\vskip 0.5mm
\noindent
- $\rho(\mu_4)=\mu_4$ et $\rho(v_{\delta})=v_{\delta+1}$ pour tout $\delta \in \llbracket 1, 2^{n-1} \rrbracket$,
\vskip 0.5mm
\noindent
- $\sigma(\mu_4)=\mu_1$ et $\sigma(v_{0})=v_{0}$.
\vskip 0.5mm
\noindent
En outre, $\rho$ et $\sigma$ vérifient la présentation \eqref{presdelt} et $\sigma^{2}$ fixe chaque élément de $E$.
\end{proposition}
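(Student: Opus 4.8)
The plan is to read off the proposition from propositions~\ref{propgen} and~\ref{regDelta}, after identifying $L=\Q(T)(\sqrt{T^{2}+1})$ as the relevant subfield of $E$ and making a careful choice of lift for $\sigma$. First one records that, by théorème~\ref{princ}(b) (equivalently, by corollaire~\ref{princ1}), $E=L(v_{0})$ with $\Gal(E/L)=\langle r\rangle$; hence $L$ is exactly the subfield of $E$ fixed by $r$, as needed to apply proposition~\ref{propgen}. Granting --- as established where $H$ and the $\mu_{i}$ are introduced --- that $H/\Q(T)$ is a $\Z/4\Z$-extension containing $L$ (which rests on $T^{2}+1$ being a sum of two squares in $\Q(T)$, the classical obstruction to embedding a quadratic extension into a cyclic quartic one), proposition~\ref{regDelta} applied to $E/\Q(T)$ gives at once that $HE/\Q(T)$ is a $\Q$-regular $\Delta_{n}$-extension.

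It remains to exhibit generators with the stated properties. By proposition~\ref{propgen} (with $M=E$, $K=\Q(T)$), $H/L$ and $E/L$ are linearly disjoint, so $H\cap E=L$, whence $[HE:\Q(T)]=[H:L]\,[E:\Q(T)]=2\cdot 2^{n}=2^{n+1}$. Since the fibre product $\Gal(H/\Q(T))\times_{\Gal(L/\Q(T))}\Gal(E/\Q(T))$ (over the two restrictions to $L$) also has order $2^{n+1}$, the injective map sending $\chi\in\Gal(HE/\Q(T))$ to $(\chi|_{H},\chi|_{E})$ is an isomorphism onto it. I then let $\rho$ be the element corresponding to $(\mathrm{Id}_{H},r)$, which is legitimate as $r\in\Gal(E/L)$ fixes $L$; it is the unique lift of $r$ in $\Gal(HE/H)$ supplied by proposition~\ref{propgen}. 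For $\sigma$, let $\tau$ generate $\Gal(H/\Q(T))\cong\Z/4\Z$ and let $s=\res^{F/\Q(T)}_{E/\Q(T)}(g)\in\Gal(E/\Q(T))$ be as in théorème~\ref{princ}(b): since $L=H^{\langle\tau^{2}\rangle}$ we have $\tau|_{L}=\beta$, and since $g$ extends $\beta$ (proposition~\ref{pdcour}(c)) we have $s|_{L}=\beta$, so $(\tau,s)$ is a genuine element of the fibre product; I take it as $\sigma$. Being a lift of $\tau$, it satisfies the presentation~\eqref{presdelt} together with $\rho$, and $\langle\rho,\sigma\rangle=\Gal(HE/\Q(T))=\Delta_{n}$, by proposition~\ref{propgen}.

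Finally one verifies the explicit formulas. By construction $\rho$ fixes $H$ pointwise, so $\rho(\mu_{4})=\mu_{4}$; and $\rho|_{E}=r$, so $\rho(v_{\delta})=r(v_{\delta})=v_{\delta+1}$ for all $\delta$, by théorème~\ref{princ}(b)/corollaire~\ref{princ1}. Next, $\sigma|_{H}=\tau$, so $\sigma(\mu_{4})=\mu_{1}$ by the labelling of the $\mu_{i}$; and $\sigma|_{E}=s$, so, using $g(z_{1,l})=z_{2,l}$ and $g(z_{2,l})=z_{1,l}$ from proposition~\ref{pdcour}(c),
\[
\sigma(v_{0})=s\Bigl(\sum_{l=1}^{2^{n-1}}z_{2,l}z_{1,l}\Bigr)=\sum_{l=1}^{2^{n-1}}z_{1,l}z_{2,l}=v_{0}.
\]
Lastly $s$ has order $2$ in $D_{2^{n}}$, so $\sigma^{2}$ corresponds to $(\tau^{2},\mathrm{Id}_{E})$ and therefore fixes every element of $E$.

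I expect the only genuinely non-formal inputs to be the structure of $E$ from Section~3 and the (already available) fact that $H/\Q(T)$ is a $\Z/4\Z$-extension containing $L$; everything else is bookkeeping with the fibre product. The point needing care is the compatibility of restrictions to $L$ --- it is exactly $r|_{L}=\mathrm{Id}_{L}$ and $\tau|_{L}=s|_{L}=\beta$ that make $(\mathrm{Id}_{H},r)$ and $(\tau,s)$ well-defined automorphisms of $HE$ in the first place.
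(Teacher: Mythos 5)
Votre démonstration est correcte et suit essentiellement la même démarche que celle de l'article : régularité et structure de $\Delta_n$ via les propositions \ref{regDelta} et \ref{propgen}, puis vérification des formules explicites à l'aide du théorème \ref{princ} et de la proposition \ref{pdcour}. La seule différence, cosmétique, concerne le choix de $\sigma$ : l'article part d'un relèvement arbitraire $\widetilde{\tau}$ de $\tau$ et le corrige par $\rho^{-t}$ pour qu'il fixe $v_0$, tandis que vous le définissez directement comme l'élément $(\tau,s)$ du produit fibré, en utilisant $s(v_0)=v_0$ — fait que l'article exploite de toute façon implicitement.
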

\begin{proof}[Preuve] L'extension $L/\Q(T)=E^{\langle r \rangle}/\Q(T)$, où $r$ est défini dans le théorème \ref{princ}, se plonge dans la $\Z/4\Z$-extension $\Q$-régulière $H/\Q(T)$. La proposition \ref{regDelta} assure alors que $HE/\Q(T)$ est une $\Delta_n$-extension $\Q$-régulière. Par la proposition \ref{propgen}, il existe un unique relèvement $\rho$ de $r$ dans $\Gal(H E/H	)$ et celui-ci vérifie ce qui suit : pour tout relèvement $\widetilde{\tau}$ de $\tau$ dans $\Delta_{n}$, les éléments $\rho$ et $\widetilde{\tau}$ engendrent $\Delta_{n}$ et vérifient \eqref{presdelt}. Fixons maintenant un tel $\widetilde{\tau}$ et considérons sa restriction à $E$. Il est alors clair qu'il existe un entier $t$ tel que  ${\rm{res}}^{H E/\Q(T)}_{E/\Q(T)}(\widetilde{\tau}) = r^t s$, où $s$ est défini dans le théorème \ref{princ}, et que l'on a
${\rm{res}}^{H E/\Q(T)}_{E/\Q(T)}(\widetilde{\tau})(v_{0}) = r^t(v_{0})$. La composée $\sigma=\rho^{-t}\widetilde{\tau}$ est alors un relev\'e de $\tau$ à $H E$ qui fixe $v_{0}$. Aussi, non seulement $\rho$ et $\sigma$ vérifient bien les deux propriétés de l'énoncé, mais ils vérifient aussi \eqref{presdelt} (voir proposition \ref{propgen}). Pour le dernier point, il suffit de remarquer que $E=L(v_{0})$ (voir théorème \ref{princ}), que $\res^{H E/\Q(T)}_{L/\Q(T)}(\sigma^2)=\mathrm{Id}_{L}$ et que $\sigma^{2}(v_{0})=v_{0}$.
\end{proof}

\begin{theorem}\label{quaternion}
L'extension $\Q(T,\sqrt{T^2+1}+(\mu_4 + v_0)^2)/\Q(T)$
est une $Q_{2^{n}}$-extension $\Q$-r\'{e}guli\`{e}re. De plus, les $\Q(T)$-conjugu\'{e}s de $\sqrt{T^2+1}+(\mu_4 + v_0)^2$ sont les 
\[(-1)^a \sqrt{T^2+1}+(\mu_a + v_{\delta})^2 , \quad (a,\delta) \in \llbracket 1,4\rrbracket \times \llbracket 0, 2^{n-2}-1\rrbracket.
\] 
\end{theorem}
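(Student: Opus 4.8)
The plan is to realize $Q_{2^n}$ as a quotient of the $\Delta_n$-extension $HE/\Q(T)$ of Proposition \ref{Deltann}. Set $N=\rho^{2^{n-2}}\sigma^2\in\Gal(HE/\Q(T))$; a short computation with the presentation \eqref{presdelt} shows that $N$ is central and of order $2$, and that $\langle N\rangle=\langle(2^{n-2},2)\rangle$ is exactly the kernel of the quotient map $\Delta_n\twoheadrightarrow Q_{2^n}$ of \eqref{presquat}. Hence $F':=(HE)^{\langle N\rangle}$ is a $Q_{2^n}$-extension of $\Q(T)$, and it is $\Q$-regular because $HE/\Q(T)$ is. So it suffices to prove that $w:=\sqrt{T^2+1}+(\mu_4+v_0)^2$ generates $F'$ over $\Q(T)$, i.e.\ that $w$ is fixed by $N$ and that its $\Delta_n$-orbit has exactly $2^n$ elements; the description of the conjugates then comes out along the way.

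First I would record the action of the generators on $\sqrt{T^2+1}$, $\mu_4$ and the $v_\delta$. By Proposition \ref{Deltann}, $\rho$ fixes $H$ (hence $\mu_4$ and $\sqrt{T^2+1}\in L\subset H$) and $\rho(v_\delta)=v_{\delta+1}$, while $\sigma|_L=\beta$, $\sigma(\mu_4)=\mu_1$ and $\sigma(v_0)=v_0$ (so $\sigma^j(v_0)=v_0$ for every $j$, since $\sigma^2$ fixes $E\ni v_0$). Recall moreover (Propositions \ref{propgen}, \ref{Deltann}) that $[HE:E]=2$, whence $H\cap E=L$ and $\mu_4\notin E$; since $\mu_4^2\in L\subset E$, the nontrivial element $\sigma^2$ of $\Gal(HE/E)$ must negate $\mu_4$. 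Writing $\mu_2=\sigma^2(\mu_4)$, $\mu_3=\sigma^3(\mu_4)$ for the remaining conjugates, one gets $\mu_2=-\mu_4$ and $\mu_3=-\mu_1$, and $\mu_1\mu_4=\pm\sqrt{T^2+1}$ (it lies in $H\cap E=L$ and its square is $T^2+1$, using $\mu_4^2=T^2+1+T\sqrt{T^2+1}$). Now $N(\sqrt{T^2+1})=\sqrt{T^2+1}$, $N(v_0)=\rho^{2^{n-2}}(v_0)=v_{2^{n-2}}=-v_0$ by \eqref{ppac}, and $N(\mu_4)=\rho^{2^{n-2}}(-\mu_4)=-\mu_4$, so $N(\mu_4+v_0)=-(\mu_4+v_0)$ and $N(w)=w$. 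For a general $g=\rho^i\sigma^j\in\Delta_n$ one computes $g(w)=(-1)^j\sqrt{T^2+1}+(\sigma^j(\mu_4)+v_i)^2$; using $\mu_2=-\mu_4$, $\mu_3=-\mu_1$ and $v_{\delta+2^{n-2}}=-v_\delta$ (\eqref{ppac}), these are precisely the $2^n$ elements $(-1)^a\sqrt{T^2+1}+(\mu_a+v_\delta)^2$, $(a,\delta)\in\llbracket 1,4\rrbracket\times\llbracket 0,2^{n-2}-1\rrbracket$, of the statement; equivalently, they are the $\epsilon\sqrt{T^2+1}+(\mu+v_e)^2$ with $(\epsilon,\mu)\in\{(1,\mu_4),(-1,\mu_1)\}$ and $e\in\llbracket 0,2^{n-1}-1\rrbracket$.

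It remains to see that these $2^n$ elements are pairwise distinct. If two of them share the same sign $\epsilon$ then $(\mu+v_e)^2=(\mu+v_{e'})^2$ with $\mu\in\{\mu_1,\mu_4\}$, so either $v_e=v_{e'}$, whence $e=e'$ by Theorem \ref{princ}, or $2\mu=-(v_e+v_{e'})\in E$; the latter is impossible because $\sigma^2$ negates $\mu_4$ (hence also $\mu_1=\pm\sqrt{T^2+1}/\mu_4$) while fixing $E$, and $\mu\neq0$. The remaining, and main, case is a collision across the two signs, say $\sqrt{T^2+1}+(\mu_4+v_e)^2=-\sqrt{T^2+1}+(\mu_1+v_{e'})^2$, i.e.\ $(\mu_1+v_{e'})^2-(\mu_4+v_e)^2=2\sqrt{T^2+1}$. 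Applying $\sigma^2$ — which fixes $\sqrt{T^2+1}$ and every $v_\delta\in E$ but sends $\mu_4\mapsto-\mu_4$ and $\mu_1=\sigma(\mu_4)\mapsto\sigma^3(\mu_4)=-\mu_1$ — and subtracting the result from the original equation annihilates every term except the cross-terms, yielding $\mu_1v_{e'}=\mu_4v_e$. Each $v_\delta$ is nonzero (it is an $L$-conjugate of $v_0$ and $[L(v_0):L]=2^{n-1}>1$, so $v_\delta\notin L$), hence $v_{e'}/v_e=\mu_4/\mu_1=:\lambda$; from $\mu_1\mu_4=\pm\sqrt{T^2+1}$ and $\mu_4^2=T^2+1+T\sqrt{T^2+1}$ one gets $\lambda=\pm(T+\sqrt{T^2+1})\in L^\ast$, which has norm $-1$ over $\Q(T)$ and hence is not a root of unity (the roots of unity of $L$ are $\pm1$, of norm $1$, by $\Q$-regularity of $L/\Q(T)$). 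Applying $r\in\Gal(E/L)$, which fixes $\lambda$, to $v_{e'}=\lambda v_e$ gives $v_{e'+k}=\lambda v_{e+k}$ for all $k$; taking the product over $k\in\llbracket 0,2^{n-1}-1\rrbracket$, where both $e+k$ and $e'+k$ run through all residues mod $2^{n-1}$, yields $\prod_\delta v_\delta=\lambda^{2^{n-1}}\prod_\delta v_\delta$ with $\prod_\delta v_\delta\neq0$, so $\lambda^{2^{n-1}}=1$, a contradiction.

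Thus $w$ has $2^n$ distinct $\Delta_n$-conjugates, so $\Q(T)(w)=F'=(HE)^{\langle N\rangle}$, which is $Q_{2^n}$-Galois over $\Q(T)$ and $\Q$-regular, and the conjugates are the listed elements. I expect the last collision case to be the only genuine difficulty: the trick is that subtracting the $\sigma^2$-image cleanly isolates the relation $\mu_1v_{e'}=\mu_4v_e$, after which the non-torsion of $\mu_4/\mu_1$ together with the $\Gal(E/L)$-orbit of $v_0$ forces the contradiction. Everything else is bookkeeping with the $\Delta_n$-action already used in Proposition \ref{Deltann}.
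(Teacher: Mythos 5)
Your proof is correct, and its skeleton coincides with the paper's: both exploit the $\Delta_n$-extension $HE/\Q(T)$ of Proposition \ref{Deltann}, identify $\langle\rho^{2^{n-2}}\sigma^2\rangle$ with the kernel $\langle(2^{n-2},2)\rangle$ of $\Delta_n\twoheadrightarrow Q_{2^n}$, check via \eqref{ppac} and $\sigma^2(\mu_4)=-\mu_4$ that $w$ is fixed by this involution, and reduce everything to the pairwise distinctness of the $2^n$ listed conjugates, extracting the relation $\mu_a v_\delta=\mu_b v_t$ by the same $\sigma^2$-trick (the paper phrases it through equation \eqref{malssss}). The genuine divergence is in the mixed-sign (odd-parity) collision: the paper factors $v_\delta^2-v_t^2=(v_\delta-v_t)(v_\delta-v_{t+2^{n-2}})$ and gets a contradiction by comparing valuations at a prime above $\mathcal{P}_{1,1}$, re-using \eqref{valsui} from the proof of Theorem \ref{princ} and Lemma \ref{ramggg}; you instead derive $v_{e'}=\lambda v_e$ with $\lambda=\mu_4/\mu_1=\pm(T+\sqrt{T^2+1})$, push this relation around the $\Gal(E/L)$-orbit with $r$, and take the product over a full period to force $\lambda^{2^{n-1}}=1$, contradicting that $\lambda$ is not a root of unity (norm $-1$ plus $\Q$-regularity of $L$). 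Your variant is more self-contained and purely algebraic — it needs no ramification data beyond the distinctness and nonvanishing of the $v_\delta$ — whereas the paper's valuation argument recycles machinery it has already set up and keeps the distinctness proofs of Theorems \ref{princ} and \ref{quaternion} uniform. Your orbit–stabilizer packaging of the endgame (rather than the paper's two steps $HE=L(\mu_4+v_0)$ and $L((\mu_4+v_0)^2)=(HE)^{\langle\rho^{2^{n-2}}\sigma^2\rangle}$) is only a presentational difference.
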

\begin{proof}[Preuve]
La proposition \ref{Deltann} montre que $H E/\Q(T)$ est une $\Delta_{n}$-extension $\Q$-régulière. Considérons les générateurs $\rho$ et $\sigma$ de $\Gal(H E/\Q(T))$ définis dans cette même proposition.

On a $H E=L(\mu_4 +v_0)$. En effet, par construction, le corps $H E$ est le compositum sur $L$ des corps $L(\mu_4)=H$ et $L(v_0)=E$, qui sont linéairement disjoints sur $L$ (voir proposition \ref{propgen}). Comme $L(\mu_4)/L$ et $L(v_0)/L$ sont galoisiennes, on obtient bien que $\mu_4 +v_0$ est un élément primitif de $H E$ sur $L$ (voir, par exemple, \cite[Proposition 3.5.5]{Wei09}).

De plus, $L((\mu_4 +v_0)^2)/\Q(T)$ est une $Q_{2^{n}}$-extension $\Q$-r\'{e}guli\`{e}re. En effet, le polyn\^{o}me $X^{2}-(\mu_4 +v_0)^{2}$ annule $\mu_4 +v_0$, donc $[H E:L((\mu_4 +v_0)^2)] \leq 2$. D'après \eqref{ppac}, on a 
$$\rho^{2^{n-2}}\circ \sigma ^{2}(\mu_4 +v_0)=\rho^{2^{n-2}}(-\mu_4 + v_0) = -\mu_4 + v_{2^{n-2}} = -(\mu_4 +v_0).$$
L'élément $\rho^{2^{n-2}}\sigma^{2}$  correspond \`{a} $(2^{n-2},2)$ dans $\Delta_{n}$, qui est d'ordre $2$. Ainsi, pour des raisons de degré, on obtient $L((\mu_4 +v_0)^2) =(H E)^{\langle \rho^{2^{n-2}}\sigma^{2}\rangle}$ et $\Gal(L((\mu_4 +v_0)^2)/\Q(T)) = Q_{2^n}$.

Enfin, on a $\Q(T,\sqrt{T^2+1}+(\mu_4 + v_0)^2)=L((\mu_4 +v_0)^2)$. En effet, pour tout $(a,\delta) \in \llbracket 1,4\rrbracket \times \llbracket 0, 2^{n-2}-1\rrbracket$, posons $w_{a,\delta}=(-1)^a \sqrt{T^2+1}+(\mu_a + v_{\delta})^2$. ll suffit de montrer que $w_{0,0}$ est un \'{e}l\'{e}ment primitif de $L((\mu_4 +v_0)^2)$ sur $\Q(T)$. Il est clair que $w_{0,0} \in L((\mu_4 +v_0)^2)$. Pour tout $(a,\delta) \in \llbracket 1,4\rrbracket \times \llbracket 0, 2^{n-2}-1\rrbracket$, on a $w_{a,\delta}= \rho^{\delta} \circ \sigma^{a}(w_{0,0})$. Il suffit donc de montrer que les $w_{a,\delta}$ sont deux \`{a} deux distincts. Soient $(a,\delta)$ et $(b,t)$ deux \'{e}l\'{e}ments distincts de $\llbracket 1,4\rrbracket \times \llbracket 0, 2^{n-2}-1\rrbracket$ tels que $w_{a,\delta}=w_{b,t}$. On remarque tout d'abord que l'on a $v_{\delta} \neq - v_{t}$. En effet, si ce n'était pas le cas, alors on aurait $v_{\delta}=-v_{t}=v_{t+2^{n-2}}$ (par l'égalité \eqref{ppac}) et donc $\delta=t+2^{n-2}$ car les $v_\delta$ sont deux à deux distincts comme mentionné dans le \S\ref{notprin11}, une contradiction. De l'égalité $w_{a,\delta}=w_{b,t}$, un petit calcul montre que
\begin{equation}\label{malssss}
2(\mu_{a}v_{\delta}-\mu_{b}v_{t})=v_{t}^{2}-v_{\delta}^{2}+((-1)^b-(-1)^a)(T+1)\sqrt{T^2+1} \in E \subset (H E)^{\langle \sigma^2 \rangle},
\end{equation} où l'inclusion $E \subset (H E)^{\langle \sigma^2 \rangle}$ vient de la proposition \ref{Deltann}. Ainsi, $
\mu_{a}v_{\delta}-\mu_{b}v_{t}=\sigma^{2}(\mu_{a}v_{\delta}-\mu_{b}v_{t})=-(\mu_{a}v_{\delta}-\mu_{b}v_{t})
$ et donc $\mu_{a}v_{\delta}-\mu_{b}v_{t}=0$. On distingue maintenant deux cas.

Supposons d'abord $a$ et $b$ de même parité. L'équation \eqref{malssss} donne $v_{t}^{2}-v_{\delta}^{2}=0$. Ainsi on a $v_{t}=v_{\delta}$ car $v_{t} \neq - v_{\delta}$. En conséquence, on obtient $t=\delta$. De l'égalité $\mu_{a}v_{\delta}-\mu_{b}v_{t}=0$, on déduit $a=b$, une contradiction.

Supposons maintenant $a$ et $b$ de parité différente. En remarquant que
\[
v_{\delta}^2-v_{t}^2=(v_{\delta}-v_t)(v_{\delta}+v_t)=(v_{\delta}-v_t)(v_{\delta}-v_{t+2^{n-2}}),
\] l'égalité \eqref{malssss} donne 
$
(v_{\delta}-v_t)(v_{\delta}-v_{t+2^{n-2}})=2(-1)^{b}(T+1)\sqrt{T^2+1}.
$
Comme précédemment, soit $\mathcal{P}$ un id\'{e}al premier de $\oQ M_1 M_2$ contenant $T+1-\sqrt{T^{2}+1}-\xi$ de valuation associée $v$. L'équation \eqref{valsui} dans la preuve du théorème \ref{princ} montre que $v(2(-1)^{b}(T+1)\sqrt{T^2+1})=v((v_{\delta}-v_t)(v_{\delta}-v_{t+2^{n-2}}))>0$. Par le lemme \ref{ramggg}, on a $v(2(-1)^{b}(T+1)\sqrt{T^2+1})=v((T+1)^2(T^2+1))=0$, une contradiction. Par conséquent $w_{a,\delta} \neq w_{b,t}$.
\end{proof}

\subsection{Réalisations explicites de $Q_{2^n}$ sur $\Q$}
Dans cette partie, nous construisons des réalisations explicites de $Q_{2^n}$ sur $\Q$ par spécialisation de la $Q_{2^n}$-extension $\Q$-régulière $\Gamma/\Q(T)$ fournie par le théorème \ref{quaternion} (voir théorème \ref{thm:spec_arith}).

\subsubsection{Bons premiers}
On donne ici une condition suffisante pour qu'un nombre premier $p$ soit un bon premier pour $\Gamma/\Q(T)$ (voir proposition \ref{bonpremier}).
\begin{lemma}\label{Bragg}
a) L'ensemble des points de branchement de $H/\Q(T)$ est contenu dans $\{ -i,i,0, \infty\}$.
\vskip 0.5mm
\noindent
b)  L'ensemble des points de branchement de $M_{1}M_{2}/\Q(T)$ est contenu dans $\{ -i,i,\infty \}\cup \{ s_{k} \mid k \in (\Z/2^{n-1}\Z)^{*} \}$.
\vskip 0.5mm
\noindent
c)  L'ensemble des points de branchement de $\Gamma/\Q(T)$ est contenu dans $\{ -i,i,0,\infty \} \cup \{ s_{k} \mid k \in (\Z/2^{n-1}\Z)^{*}\}$.
\end{lemma}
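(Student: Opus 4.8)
The plan is to prove the three inclusions in order, reducing everything to the ramification of the three fields $H$, $M_1$, $M_2$ (already pinned down by Lemmas~\ref{ramggg} and~\ref{unicycl}) together with the obvious ramification of $L/\Q(T)$ over $\pm i$. First, for part a): recall that $H=\Q(T)(\mu_4)$ with $\mu_4^2=T^2+1+T\sqrt{T^2+1}$, so that $H=L(\mu_4)$. Clearing the radical, $\mu_4$ is a root of $P(X)=X^4-2(T^2+1)X^2+(T^2+1)\in\Q(T)[X]$, and since $[H:\Q(T)]=4$ this is its minimal polynomial over $\Q(T)$. Using $\mathrm{disc}(X^4+pX^2+q)=16q(p^2-4q)^2$ one gets $\mathrm{disc}(P)=256\,T^4(T^2+1)^3$, which vanishes only at $T\in\{0,i,-i\}$. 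Hence, for any finite $t_0\notin\{0,i,-i\}$, the reduction of $P$ modulo $T-t_0$ is separable, so the splitting field of $P$ over $\oQ(T)$ — a fortiori $H\oQ/\oQ(T)$ — is unramified over $t_0$. Adjoining $\infty$ yields a).

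For part b): by Lemma~\ref{lemcrt} the extension $M_1M_2/\Q(T)$, hence also $M_1M_2\oQ/\oQ(T)$, is Galois, so ramification indices are multiplicative along the tower $\oQ(T)\subset L\oQ\subset M_1M_2\oQ$. Thus a finite $t_0$ is a branch point of $M_1M_2/\Q(T)$ only if $\langle T-t_0\rangle$ ramifies in $\overline{\mathcal{O}}$ — which, $T^2+1$ being squarefree, forces $t_0\in\{i,-i\}$ — or else some prime of $\overline{\mathcal{O}}$ above $\langle T-t_0\rangle$ ramifies in $\oQ M_1M_2/\oQ L$. In the latter case, since $M_1/L$, $M_2/L$ and $M_1M_2/L$ are all Galois, such a prime must ramify in $\oQ M_1$ or in $\oQ M_2$, hence is one of the $\mathcal{P}_{\ell,k}$ by Lemma~\ref{unicycl}a; and $\mathcal{P}_{\ell,k}\cap\oQ[T]=\langle T-s_k\rangle$ by Lemma~\ref{ramggg}c, so $t_0=s_k$. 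Every finite branch point of $M_1M_2/\Q(T)$ therefore lies in $\{i,-i\}\cup\{s_k:k\in(\Z/2^{n-1}\Z)^{*}\}$; including $\infty$ gives b).

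For part c): by Theorem~\ref{quaternion} and its proof, $\Gamma=L((\mu_4+v_0)^2)$ is a subfield of $HE$, and $HE=H\cdot E$; moreover $E=L(v_0)\subseteq M_1M_2$, since $v_0=\sum_{l}z_{2,l}z_{1,l}$ with $z_{1,l}\in M_1$ and $z_{2,l}\in M_2$. Hence $\Gamma\subseteq H\cdot M_1M_2$. Since $H/\Q(T)$ and $M_1M_2/\Q(T)$ are both Galois, a place unramified in $H\oQ/\oQ(T)$ and in $M_1M_2\oQ/\oQ(T)$ remains unramified in $HM_1M_2\oQ/\oQ(T)$ (its inertia group injects into the product of the two inertia groups). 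Therefore the branch points of $\Gamma/\Q(T)$ lie in the union of those of $H/\Q(T)$ and of $M_1M_2/\Q(T)$, which by a) and b) is $\{-i,i,0,\infty\}\cup\{s_k:k\in(\Z/2^{n-1}\Z)^{*}\}$.

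The delicate point is b): one has to transfer the ramification data from $\overline{\mathcal{O}}$ — which, being the integral closure of $\oQ[T]$, does not even see the place $T=\infty$ — over to $M_1M_2/\Q(T)$, juggling multiplicativity of ramification in the tower, the fact that a prime unramified in each factor of a compositum of Galois extensions stays unramified in the compositum, and the computation of Lemma~\ref{ramggg} placing each $\mathcal{P}_{\ell,k}$ above $s_k$; the place at infinity causes no trouble precisely because it is already allowed in the target sets.
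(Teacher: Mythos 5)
Your proof is correct and follows essentially the same route as the paper: discriminant $256\,T^4(T^2+1)^3$ for a), the tower $\oQ(T)\subset L\oQ\subset M_1M_2\oQ$ with Lemmes \ref{unicycl} and \ref{ramggg} locating the ramified primes $\mathcal{P}_{\ell,k}$ above the $s_k$ for b), and the inclusion $\Gamma\subset HM_1M_2$ for c). The only cosmetic difference is that where the paper invokes the lemme d'Abhyankar to control ramification in the compositum $\oQ M_1M_2/\oQ L$, you use the equally valid injection of the inertia group of a compositum of Galois extensions into the product of the inertia groups of the factors.
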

\begin{proof}[Preuve] a) Cela vient du fait que le discriminant du polynôme minimal de $\mu_{4}$ sur $\oQ (T)$ est $256 T^{4}(T^{2}+1)^{3}$.
\vskip 2mm
\noindent
b) Soit $ \lambda \in \oQ$ un point de branchement de $M_{1}M_{2} /\Q (T)$ qui n'est pas dans $ \{-i,i \}$. Par \cite[Proposition 6.2.3]{Sti09}, $\lambda$ n'est pas un point de branchement de $L/\Q(T)$. En conséquence, on peut trouver un idéal premier $\mathcal{P}$ de $\oQ L$ contenant $T- \lambda$ et se ramifiant dans $\oQ M_{1}M_{2} / \oQ L$. Par le lemme \ref{unicycl} et le lemme d'Abhyankar, il existe $\ell \in \{1,2\}$ et $k \in (\Z/2^{n-1}\Z)^{*}$ tels que $\mathcal{P}$ soit engendré par $T+1+(-1)^{\ell}\sqrt{T^{2}+1}-\xi^{k}$. D'où $\lambda=s_{k}$ par le lemme \ref{ramggg}.
\vskip 2mm
\noindent
c) Il suffit de remarquer que l'on a $\Gamma \subset  H M_1 M_2$ et d'utiliser le a) et le b). 
\end{proof}
\begin{lemma} \label{unitariser}
Soit $p$ un nombre premier impair qui ne divise pas $2^{2^{n-2}}+1$. Alors, pour tout $t \in \{i,-i \} \cup \{s_{k} \mid k \in (\Z/2^{n-1}\Z)^{*}\}$ et tout idéal premier de $\Z[\xi]$ au dessus de $p$ de valuation associée $v$, on a $v(t)=0$.
\end{lemma}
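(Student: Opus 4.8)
Le plan est le suivant. Le cas $t\in\{i,-i\}$ est immédiat : comme $n\geq 3$, on a $i=\xi^{2^{n-3}}$, qui est une unité de $\Z[\xi]$ (c'est une racine de l'unité) ; ainsi $v(i)=v(-i)=0$ pour tout idéal premier de $\Z[\xi]$ au dessus de $p$.

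Pour $t=s_k$, je commencerais par réécrire $s_k$ comme fraction rationnelle en $\xi^k$ en mettant au même dénominateur. Comme $k\in(\Z/2^{n-1}\Z)^{*}$, on a $\xi^k\neq 1$, et un calcul direct donne
\[
s_k=-\frac{1-\xi^k}{2}+\frac{1}{2(1-\xi^k)}=\frac{1-(1-\xi^k)^2}{2(1-\xi^k)}=\frac{\xi^k(2-\xi^k)}{2(1-\xi^k)}.
\]
Puisque $\xi^k$ est une unité de $\Z[\xi]$, on a $v(\xi^k)=0$, d'où $v(s_k)=v(2-\xi^k)-v(2)-v(1-\xi^k)$, et il suffit de montrer que chacun de ces trois termes est nul.

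Le terme $v(2)$ est nul car $p$ est impair. Pour les deux autres, on utilise que $k$ est premier à $2^{n-1}$, de sorte que $\xi^k$ est une racine primitive $2^{n-1}$-ième de l'unité, de polynôme minimal $\Phi_{2^{n-1}}(X)=X^{2^{n-2}}+1$ sur $\Q$. On en déduit $N_{\Q(\xi)/\Q}(1-\xi^k)=\Phi_{2^{n-1}}(1)=2$ et $N_{\Q(\xi)/\Q}(2-\xi^k)=\Phi_{2^{n-1}}(2)=2^{2^{n-2}}+1$. Comme $1-\xi^k$ et $2-\xi^k$ sont entiers sur $\Z$, on a $v(1-\xi^k)\geq 0$ et $v(2-\xi^k)\geq 0$ ; et si l'une de ces valuations était strictement positive, $p$ diviserait la norme correspondante, c'est-à-dire $p\mid 2$ (impossible, $p$ impair) ou $p\mid 2^{2^{n-2}}+1$ (exclu par hypothèse). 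D'où $v(1-\xi^k)=v(2-\xi^k)=0$ et finalement $v(s_k)=0$. Le seul point demandant un minimum d'attention — sans être un réel obstacle — est de s'assurer que $k$ impair entraîne bien que $\xi^k$ est d'ordre exactement $2^{n-1}$, afin d'identifier correctement les normes ci-dessus.
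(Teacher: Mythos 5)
Ta preuve est correcte et suit essentiellement la même démarche que celle de l'article : même réécriture $s_k=\xi^k(2-\xi^k)/(2(1-\xi^k))$, puis calcul des normes $N(1-\xi^k)=2$ et $N(2-\xi^k)=2^{2^{n-2}}+1$ (via $\Phi_{2^{n-1}}$ plutôt que via le polynôme minimal de $1-\xi^k$, ce qui revient au même) et conclusion par les hypothèses sur $p$. Rien à signaler.
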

\begin{proof}[Preuve]
Le cas où $t \in \{-i,i \}$ est immédiat. Il reste à considérer le cas où $t \in \{s_{k} \mid k \in (\Z/2^{n-1}\Z)^{*}\}$. Soit donc $k \in(\Z/2^{n-1}\Z)^{*}$. On a 
\begin{equation}\label{egalsk}
s_{k}=\frac{\xi^{k}(2-\xi^{k})}{2(1-\xi^{k})}.
\end{equation} 
On remarque ensuite que le polynôme minimal de $1-\xi^{k}$ (resp. $2-\xi^{k}$) sur $\Q$ est $(X-1)^{2^{n-2}}+1$ (resp. $(X-2)^{2^{n-2}}+1$). Donc la norme de $1-\xi^{k}$ (resp. $2-\xi^{k}$) dans $\Q(\xi)/\Q$ est $2$ (resp. $2^{2^{n-2}}+1$). Par conséquent, si $v$ est comme dans l'énoncé, on a $v(x)=0$ dès que $x$ est un $\Q$-conjugué de $1-\xi^{k}$ ou $2-\xi^k$. On peut ainsi conclure en vertu de \eqref{egalsk}.
\end{proof}
\begin{lemma}\label{ssssoi}
Pour tout $x \in \Q(\xi)$, on note $N(x)$ la norme de $x$ dans $\Q(\xi)/\Q$.
Soit $p$ un nombre premier impair tel que
\vskip 0.5mm
\noindent
a) pour tout $k \not \equiv 1  \pmod {2^{n-1}}$, le nombre premier $p$ ne divise pas
$
N((1-\xi^{k})(1-\xi)+1),
$
\vskip 0.5mm
\noindent
b) pour tout $k \in (\Z/2^{n-1}\Z)^{*}$, le nombre premier $p$ ne divise pas
$
N( \xi^{k}(2-\xi) \pm 2i(1-\xi^{k})  ).
$
\vskip 0.5mm
\noindent
Alors deux points de branchement quelconques et distincts de $\Gamma/\Q(T)$ ne peuvent se rencontrer modulo $p$. Cette dernière conclusion est en particulier vraie pour $p \geq 7^{2^{n-2}}+1$.
\end{lemma}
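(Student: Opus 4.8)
The plan is to run through all pairs of elements of the finite set $S:=\{-i,i,0,\infty\}\cup\{s_k\mid k\in(\Z/2^{n-1}\Z)^{*}\}$, which by Lemma~\ref{Bragg}c) contains the branch points of $\Gamma/\Q(T)$, and to show that no two distinct ones meet modulo $p$. All finite elements of $S$ lie in the Galois extension $\Q(\xi)/\Q$ (recall $i=\xi^{2^{n-3}}$). Since $p$ is odd and $p\nmid 2^{2^{n-2}}+1$ (ensured by the hypotheses, and automatic once $p\geq 7^{2^{n-2}}+1$), Lemma~\ref{unitariser} gives $w(t)=0$ at every place $w$ above $p$ for all $t\in\{i,-i\}\cup\{s_k\}$; hence each such $t$, and $1/t$, is integral over $\Z_{p\Z}$ and a $p$-adic unit. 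This already disposes of every pair containing $0$ or $\infty$: by Definition~\ref{rencontre}, $0$ could meet another point of $S$ only if that point had strictly positive valuation at some place above $p$ (impossible here), $\infty$ only through the reciprocal of such a point, and $0$ and $\infty$ cannot meet at all. So it remains to treat a pair $t_0\neq t_1$ with $t_0,t_1\in\{i,-i\}\cup\{s_k\}$.

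For such a pair, if $t_0$ and $t_1$ met modulo $p$, then applying Lemma~\ref{normren} to $(t_0,t_1)$ in case 1) of Definition~\ref{rencontre}, or to $(1/t_0,1/t_1)$ in case 2) — legitimate because $1/t_0,1/t_1$ are again integral over $\Z_{p\Z}$ and $t_0t_1$ is a $p$-adic unit, so $N(t_0-t_1)$ and $N(1/t_0-1/t_1)$ have the same $p$-valuation — one would obtain $p\mid N(t_0-t_1)$ with $N=N_{\Q(\xi)/\Q}$. So it is enough to check $p\nmid N(t_0-t_1)$ for each pair. For $\{i,-i\}$ one has $N(2i)=\pm 2^{2^{n-2}}$, prime to $p$. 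For $\{\pm i,s_k\}$ the inputs are $s_k=\xi^k(2-\xi^k)/(2(1-\xi^k))$ (see \eqref{egalsk}), $N(1-\xi^k)=\Phi_{2^{n-1}}(1)=2$, and the identity $\xi^k(2-\xi^k)\mp 2i(1-\xi^k)=-(\xi^k-(1\pm i))^2$; together they reduce $p\mid N(s_k\mp i)$ to $p\mid N(\xi^k-(1\pm i))$, which, after applying $\sigma_k\colon\xi\mapsto\xi^k$ (this sends $s_1$ to $s_k$ and $i$ to $\pm i$), is precisely forbidden by hypothesis~b). For $\{s_k,s_{k'}\}$ with $k\not\equiv k'\pmod{2^{n-1}}$ the key identity is
\[
s_k-s_{k'}=\frac{(\xi^k-\xi^{k'})\bigl(1+(1-\xi^k)(1-\xi^{k'})\bigr)}{2(1-\xi^k)(1-\xi^{k'})};
\]
here $N(\xi^k-\xi^{k'})=\pm N(1-\xi^{k-k'})$ is a power of $2$ (a norm of $1$ minus a $2$-power root of unity), the denominator has $2$-power norm, the factor $1+(1-\xi^k)(1-\xi^{k'})$ is nonzero by Lemma~\ref{ramggg}a), and applying $\sigma_{k'}$ to $1+(1-\xi^{k{k'}^{-1}})(1-\xi)$ identifies $N(1+(1-\xi^k)(1-\xi^{k'}))$ with $N(1+(1-\xi^{k{k'}^{-1}})(1-\xi))$, where $k{k'}^{-1}\not\equiv 1$, which hypothesis~a) forbids. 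Thus $p\nmid N(t_0-t_1)$ in all cases.

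Finally, the last assertion follows from crude archimedean estimates: every conjugate of $1+(1-\xi^k)(1-\xi)$ has absolute value $\leq 5$ and every conjugate of $\xi^k(2-\xi)\pm 2i(1-\xi^k)$ has absolute value $\leq 7$, so the nonzero norms appearing in a) and b) are bounded in absolute value by $5^{2^{n-2}}$ and $7^{2^{n-2}}$ respectively; a prime $p\geq 7^{2^{n-2}}+1$ therefore divides none of them and is prime to $2^{2^{n-2}}+1$, so a) and b) hold.

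I expect the main obstacle to be the bookkeeping in the second paragraph: recognizing how the numerators of $s_k\mp i$ and of $s_k-s_{k'}$ relate, up to a power of $2$, to the quantities occurring in hypotheses b) and a), and then carrying out the Galois-conjugacy reductions that bring the generic indices $(k)$ and $(k,k')$ into the normalized forms in which a) and b) are stated.
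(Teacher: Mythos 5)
Your proof is correct and follows essentially the same route as the paper's: bound the branch points by Lemma \ref{Bragg}, convert ``meeting modulo $p$'' into $p \mid N_{\Q(\xi)/\Q}(t_0-t_1)$ via Lemma \ref{normren}, use the same explicit identities for $s_k-s_{k'}$ and $s_k\pm i$ together with Galois conjugation to land exactly on the quantities in a) and b), and deduce the last assertion from the bound $7^{2^{n-2}}$ on those norms. The only caveat is your parenthetical claim that a) and b) \emph{ensure} $p\nmid 2^{2^{n-2}}+1$ (which you need for the pairs involving $0$ and $\infty$): this is not justified as stated, but the paper's proof makes the same implicit assumption by invoking Lemma \ref{unitariser}, and the condition is harmless since it is imposed separately in Théorème \ref{thm:spec_arith} and is automatic when $p\geq 7^{2^{n-2}}+1$.
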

\begin{proof}[Preuve] Par le lemme \ref{Bragg}, il suffit de vérifier que deux éléments quelconques et distincts de $\{-i,i,0,\infty\}\cup \{ s_{k} \mid k \in (\Z/2^{n-1}\Z)^{*}\}$ ne peuvent se rencontrer modulo $p$. 

D'après le lemme \ref{unitariser}, on a $v(s_{k})=0$ pour tout $k \in (\Z/2^{n-1}\Z)^{*}$ et toute valuation $v$ de $\Q(\xi)$ étendant la valuation $p$-adique. Par conséquent, $s_{k}$ et $0$ ne peuvent se rencontrer modulo $p$. Il en est de même pour $s_k$ et $\infty$.

On remarque maintenant que les $s_k$ ($k \in (\Z/2^{n-1}\Z)^{*}$) sont deux à deux conjugués sur $\Q$. Par conséquent, pour montrer qu'il n'existe pas $k \not= l$ tels que $s_k$ et $s_l$ se rencontrent modulo $p$, il suffit de le faire pour $k \not=1$ et $l=1$. Fixons donc $k \not=1$. On a
\[
s_{k}-s_{1}=\frac{\xi^{k}-\xi}{2(1-\xi^{k})(1-\xi)}((1-\xi^{k})(1-\xi)+1 ).
\]
D'après l'hypothèse du a), le nombre premier $p$ ne divise pas $N((1-\xi^{k})(1-\xi)+1)$. De plus, on montre comme dans la preuve du lemme \ref{unitariser} que $p$ ne divise, ni $N(2(1-\xi^{k})(1-\xi))$, ni $N(\xi^{k}-\xi)$. Le lemme \ref{normren} montre alors que $s_k$ et $s_1$ ne se rencontrent pas modulo $p$.

Ensuite, pour tout $k \in (\Z/2^{n-1}\Z)^{*}$, on a
\[
s_{k}\pm i =\frac{\xi^{k}(2-\xi^{k})\pm 2i (1-\xi^{k})}{2(1-\xi^{k})}
\]
et on montre comme ci-dessus que $s_k$ et $\pm i$ ne peuvent se rencontrer modulo $p$.

Pour le dernier point, on remarque que les valeurs absolues des normes de $a)$ et $b)$ sont inférieures ou égales à $7^{2^{n-2}}$.
\end{proof}

\begin{lemma}\label{ertfg}
Aucun nombre premier impair $p$ n'est verticalement ramifié dans $\Gamma/\Q(T)$.
\end{lemma}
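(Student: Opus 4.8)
The plan is to show that the integral closure of $\Z[T]$ in $\Gamma$ — or rather a convenient overfield containing $\Gamma$ — is unramified over each prime ideal $p\Z[T]$ when $p$ is odd, by decomposing the extension into pieces whose vertical ramification we can control explicitly. Since $\Gamma \subset H M_1 M_2$ by Lemma \ref{Bragg}(c), and vertical ramification is inherited by subextensions (a prime ramified below stays ramified above, so conversely if $p\Z[T]$ is unramified in $HM_1M_2$ it is unramified in $\Gamma$), it suffices to treat $H M_1 M_2/\Q(T)$. I would split this as follows: first the constant field extension $\Q(\xi,\mu_4)/\Q$ is unramified in the sense that adjoining roots of unity contributes nothing vertically (the relevant primes are those of $\Z[\xi,\mu_4]$, and $p\Z[T]$ extended to $\Z[\xi,\mu_4][T]$ is unramified since $p$ is odd, hence coprime to $2^{n-1}$ and to $4$); then the quadratic piece $L = \Q(T)(\sqrt{T^2+1})$, whose relative discriminant over $\Q[T]$ is (up to units and powers of $2$) a divisor of $T^2+1$, so the only primes of $\Z$ that could ramify vertically are those dividing the discriminant of $T^2+1$ as a polynomial, namely $2$ — excluded; and finally the Kummer tower $M_\ell/L$.

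For the Kummer part, the key input is Lemma \ref{unicycl}(a) together with the Abhyankar lemma: over $\oQ L$, the only primes of $\overline{\mathcal O}$ ramified in $\oQ M_\ell$ are the $\mathcal P_{\ell,k}$, which lie over the horizontal primes $(T - s_k)$ of $\oQ[T]$ and over the primes $(T^2+1)$, $(1/T)$ at infinity — none of which is vertical. More precisely, the radicand $T+1+(-1)^\ell\sqrt{T^2+1}-\xi^k$ is, up to a unit of $\overline{\mathcal O}[1/p]$, supported only on these horizontal divisors; adjoining its $2^{n-1}$-th root therefore introduces vertical ramification at $p$ only if $p \mid 2^{n-1}$, which is false for $p$ odd. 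One should be slightly careful that ramification "over $\Q$" versus "over $\oQ$" can differ only at primes dividing the discriminant of the constant field, already handled in the previous step, so the $\oQ$-level computation transfers.

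The main obstacle I anticipate is bookkeeping rather than a genuine difficulty: one must verify that the generators of $HM_1M_2$ over $\Q(T)$ satisfy equations that are, after inverting $p$ and the $2$-power units, monic with unit leading coefficient and with the discriminant (or the relevant different) a product of horizontal primes and primes over $2$. Concretely I would argue: the ring $R = \Z_{p\Z}[\xi,\mu_4,\sqrt{T^2+1}][T]$ is a localization at $p$ of the integral closure of $\Z[T]$ in $L(\xi,\mu_4)$, and $p R$ is unramified (being étale over $\Z_{p\Z}[T]$ since $p$ is coprime to the discriminant contributions, all involving only $2$ and the horizontal polynomial $T^2+1$); then each $x_{\ell,k}$ satisfies an Eisenstein-at-$\mathcal P_{\ell,k}$ polynomial $X^{2^{n-1}} - (\text{radicand})$ over $R$, whose discriminant is $\pm 2^{n-1}$ times a power of the radicand, hence a unit times a product of the horizontal primes $(T-s_k)$ (and primes over $2$) after localizing at $p$. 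Thus $p\,\big(\text{integral closure of }\Z[T]\text{ in }HM_1M_2\big)$ is unramified, a fortiori so is its restriction to $\Gamma$, which is exactly condition 3) of Definition \ref{bonpremiers} failing — i.e. $p$ is not verticallement ramifié.
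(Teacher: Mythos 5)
Your overall reduction (pass to $HM_1M_2 \supset \Gamma$, treat $\Q(T,\xi)$, $L$, $H$ and the Kummer layers separately, then conclude for the compositum) is the same skeleton as the paper's, and your intended Kummer computation --- discriminant of $X^{2^{n-1}}-f$ equal to a power of $2$ times a power of the radicand $f$ --- would indeed be a cleaner route than the paper's explicit manipulation of the differences $\xi^l x_{\ell,k}-\xi^{l'}x_{\ell',k'}$. But as written there are two genuine gaps. First, you treat $\mu_4$ as a root of unity and fold $H$ into a ``constant field extension $\Q(\xi,\mu_4)/\Q$''. This is a misidentification: $H/\Q(T)$ is a $\Q$-régulière $\Z/4\Z$-extension containing $L$ (see the proof of la proposition \ref{regDelta}), so it is not obtained by adjoining constants, and the coprimality of $p$ with $4$ says nothing about its vertical ramification. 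One must actually rule out vertical ramification of $H/\Q(T)$, which the paper does by computing that the discriminant of the minimal polynomial of $\mu_4$ over $\Q(T)$ is $256\,T^4(T^2+1)^3$, whose only constant prime factor is $2$.

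Second, the key arithmetic claim for the Kummer layers --- that the radicand $f=T+1+(-1)^{\ell}\sqrt{T^2+1}-\xi^{k}$ is a unit at every prime above $p$ of the relevant order, so that the discriminant $\pm(2^{n-1})^{2^{n-1}}f^{2^{n-1}-1}$ is prime to $p$ --- is asserted but not justified. The justification you offer cannot work: le lemme \ref{unicycl} and Abhyankar describe ramification of $\oQ M_\ell/\oQ L$, i.e.\ horizontal (geometric) ramification, and saying that $f$ is ``up to a unit of $\overline{\mathcal O}[1/p]$'' supported on horizontal divisors is vacuous, since every nonzero element of $\oQ$ is already a unit of $\overline{\mathcal O}$; likewise the transfer principle ``ramification over $\Q$ versus over $\oQ$ differs only at primes dividing the discriminant of the constant field'' is false (e.g.\ $\Q(T,\sqrt{pT})/\Q(T)$ is $\Q$-régulière, geometrically ramified only at $0$ and $\infty$, yet verticalement ramifiée en $p$). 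What is needed, and what the paper in effect proves by exhibiting elements of the form $2^uT^v+a$, is that $f$ has trivial $p$-content; in your setup this can be supplied from le lemme \ref{ramggg}~c): the product of $f$ with its conjugate is $2(1-\xi^k)(T-s_k)$, and since $N_{\Q(\xi)/\Q}(1-\xi^k)=2$ and $T-s_k$ is monic in $T$, this product lies in no prime above $p$ for $p$ impair. Without that step (and the corrected treatment of $H$), the proposal does not yet prove the lemma.
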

\begin{proof}[Preuve]
 Il suffit de travailler au dessus du localisé $R=\Z[T]_{p\Z[T]}$ de $\Z[T]$ en $p\Z[T]$. Remarquons que $\Gamma \subset HM_{1}M_{2}$ et notons $C$ la clôture intégrale de $R$ dans $HM_{1}M_{2}$.

Tout d'abord, le discriminant du polynôme minimal de $\mu_{4}$ sur $\Q(T)$ est $256T^4(T^{2}+1)^{3} \not \in p \Z[T]$, donc $pR$ est non ramifié dans $H/\Q(T)$. 

Soient maintenant $\ell_{0} \in \{1,2\}$ et $k_{0} \in (\Z/2^{n-1}\Z)^{*}$. Les $\Q(T)$-conjugués de $x_{\ell_{0},k_{0}}$ sont parmi les $\xi^{l}x_{\ell,k}$ où $l \in \llbracket 1, 2^{n-1} \rrbracket$, $\ell \in \{1,2\}$ et $k \in (\Z/2^{n-1}\Z)^{*}$. En posant
\[
\eta_{(l,\ell,k),(l^{'},\ell^{'},k^{'})}=\xi^{l}x_{\ell,k}-\xi^{l^{'}}x_{\ell^{'},k^{'}}
\] 
pour tous $(l,\ell,k) \neq (l^{'},\ell^{'},k^{'})$, on voit que le discriminant du polynôme minimal de $x_{\ell_{0},k_{0}}$ sur $\Q(T)$ divise 
$\prod_{(l,\ell,k)\neq(l^{'},\ell^{'},k^{'})}\eta_{(l,\ell,k),(l^{'},\ell^{'},k^{'})}$ dans $C$.

Soient $(l,\ell,k) \neq (l^{'},\ell^{'},k^{'})$. Pour simplifier, on pose $\eta=\eta_{(l,\ell,k),(l^{'},\ell^{'},k^{'})}$. Supposons tout d'abord $\ell \neq \ell^{'}$. Par définition des $x_{\ell, k}$, on a 
$\eta r=4T^{2}+4-(\xi^{k}-\xi^{k^{'}})^{2} \in C,$ où
$$
r=(((-1)^l-(-1)^{l'})\sqrt{T^2+1}-(\xi^k-\xi^{k'}) ) \prod_{s=0}^{2^{n-2}}((\xi^{l}x_{\ell,k})^{2^s}-(\xi^{l^{'}}x_{\ell^{'},k^{'}})^{2^s}) \in C.
$$
On peut encore multiplier 
$4T^{2}+4-(\xi^{k}-\xi^{k^{'}})^{2}$ par ses $\Q(T)$-conjugués pour obtenir qu'il existe un $r' \in C$ tel que 
$\eta r ' = 4^{u}T^{v}+a,$ où $u \in \mathbb{N}$, $v \in \mathbb{N}$ et $a \in \Z[T]$ de degré inférieur ou égal à $v-1$.

Supposons maintenant $\ell = \ell{'}$ et $k=k{'}$. Dans ce cas, on a 
$$\eta x_{\ell,k}^{2^{n-1}-1}=\xi^{l}(1-\xi^{l{'}-l})(T+1+(-1)^{\ell}\sqrt{T^{2}+1}-\xi^{k}).$$ 
En multipliant ce dernier élément par $T+1-(-1)^{\ell}\sqrt{T^{2}+1}-\xi^{k} \in C$, on obtient qu'il existe un $r'$ dans $C$ tel que $\eta r '= \xi^{l}(1-\xi^{l^{'}-l})(2(1-\xi^{k})T+(1-\xi)^{2}-1)$. 
Or, la norme de $\xi$ dans $\Q(\xi)/\Q$ vaut $\pm 1$ et, comme déjà vu, la norme de $1-\xi^{k}$ dans cette même extension est une puissance de 2 pour tout $k \not \equiv 0  \pmod {2^{n-1}}$. On peut encore multiplier 
$$\xi^{l}(1-\xi^{l^{'}-l})(2(1-\xi^{k})T+(1-\xi)^{2}-1)$$ par ses $\Q(T)$-conjugués pour obtenir qu'il existe un $r'' \in C$ tel que 
$\eta r '' = \pm 2^{u}T^{v}+a,$ où $u \in \mathbb{N}$, $v \in \mathbb{N}$ et $a \in \Z[T]$ de degré inférieur ou égal à $v-1$.

Supposons enfin $\ell=\ell{'}$ et $k \neq k{'}$. Dans ce cas, il existe un élément $r$ de $C$ tel $\eta r=2^u$ avec $u \in \mathbb{N}$. En effet, le produit de $\eta$ et  
$\prod_{s=0}^{2^{n-2}}((\xi^{l}x_{\ell,k})^{2^s}-(\xi^{l^{'}}x_{\ell,k^{'}})^{2^s})$ vaut $-\xi^{k}(1-\xi^{k'-k})$, dont la norme est une puissance de $2$.

En considérant les trois cas ci-dessus, il existe donc un élément $r$ de $C$ tel que le produit de $r$ et du discriminant du polynôme minimal de $x_{\ell_{0},k_{0}}$ sur $\Q(T)$ soit de la forme $2^{u}T^{v}+a \in \Z[T]$ avec $u \in \N$, $v \in \N$ et $a \in \Z_{(p)}[T]$ de degré inférieur ou égal à $v-1$. Par conséquent, ce discriminant n'est pas dans $pR$. En particulier, $pR$ n'est pas ramifié dans $\Q(T)(x_{\ell_{0},k_{0}})$. De plus, $pR$ n'est pas ramifié dans $ \Q(T, \xi)/\Q(T)$. Le lemme d'Abhyankar permet alors de conclure.
\end{proof} 
\begin{proposition}\label{bonpremier} Soit $p$ un nombre premier impair qui vérifie les conditions du lemme \ref{ssssoi}. Alors $p$ est un bon premier pour $\Gamma/\Q(T)$.
\end{proposition}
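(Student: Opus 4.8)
The plan is to verify that $p$ satisfies none of the four conditions defining a \emph{mauvais premier} in Définition \ref{bonpremiers}, applied to the group $G = Q_{2^n} = \Gal(\Gamma/\Q(T))$ and the extension $\Gamma/\Q(T)$; concluding that $p$ is a bon premier is then immediate.

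First I would dispose of condition 1): since $|Q_{2^n}| = 2^n$ and $p$ is odd (oddness being part of the hypotheses of the lemme \ref{ssssoi}), one has $|G| \notin p\Z$. Condition 2), namely that two distinct branch points of $\Gamma/\Q(T)$ meet modulo $p$, is exactly what the lemme \ref{ssssoi} forbids under the assumed hypotheses, so I would simply invoke it. Condition 3), vertical ramification of $p$ in $\Gamma/\Q(T)$, is excluded by the lemme \ref{ertfg}, which applies to every odd prime.

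The remaining point is condition 4): one must check that $p$ does not ramify in $\Q(t_1, \dots, t_r)/\Q$, where $t_1, \dots, t_r$ are the branch points of $\Gamma/\Q(T)$. Here I would use the lemme \ref{Bragg}(c) to confine these branch points to $\{-i, i, 0, \infty\} \cup \{ s_k \mid k \in (\Z/2^{n-1}\Z)^* \}$. The values $0$ and $\infty$ are rational and contribute nothing to the field $\Q(t_1, \dots, t_r)$. Since $n \geq 3$ we have $i = \xi^{2^{n-3}} \in \Q(\xi)$, and each $s_k = -(1-\xi^k)/2 + 1/(2(1-\xi^k))$ visibly lies in $\Q(\xi)$; hence $\Q(t_1, \dots, t_r) \subseteq \Q(\xi) = \Q(\zeta_{2^{n-1}})$. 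Only the prime $2$ ramifies in this cyclotomic field, so the odd prime $p$ is unramified there, and a fortiori in the subfield $\Q(t_1, \dots, t_r)$. This rules out condition 4), so $p$ is a bon premier for $\Gamma/\Q(T)$.

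I expect no genuine obstacle: the proposition is essentially a bookkeeping assembly of the lemmes \ref{Bragg}, \ref{ssssoi} and \ref{ertfg}, together with the elementary observation that all the candidate branch points lie in $\Q(\xi)$. The only point requiring a moment's care is the inclusion $\Q(t_1, \dots, t_r) \subseteq \Q(\xi)$, which rests on the identity $i = \xi^{2^{n-3}}$ (valid precisely because $n \geq 3$) and on the explicit form of the $s_k$.
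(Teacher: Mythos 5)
Your proof is correct and follows essentially the same route as the paper: conditions 1)--3) of the définition \ref{bonpremiers} are ruled out exactly as you do (oddness of $p$, lemme \ref{ssssoi}, lemme \ref{ertfg}), and for condition 4) the paper likewise observes that the field generated by the branch points is contained in $\Q(\xi)$, in which an odd prime is unramified — you merely spell out this last inclusion in more detail than the paper does.
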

\begin{proof}[Preuve]
D'après les lemmes \ref{ssssoi} et \ref{ertfg}, $p$ ne vérifie aucune des conditions 2) et 3) de la définition \ref{bonpremiers}. De plus, $p$ ne vérifie pas la condition 1). Quant à la condition 4), il suffit de voir que le corps engendré par les points de branchement de $\Gamma/\Q(T)$ est contenu dans $\Q(\xi)$.
\end{proof}
\subsubsection{Invariant canonique de l'inertie }
Dans ce qui suit, $\Lambda$ et $\Sigma$ sont les générateurs de $Q_{2^n}$ de la présentation \eqref{presquat}. Rappelons que les classes de conjugaison non triviales de $Q_{2^n}$ sont
\vskip 0.5mm
\noindent
- les $\mathcal{A}_{j}=\{\Lambda^{j},\Lambda^{-j}\}$ où $j \in \llbracket 1, 2^{n-2}-1\rrbracket$ et $\mathcal{A}_{2^{n-2}}=\{\Lambda^{2^{n-2}} \}$,
\vskip 0.5mm
\noindent
- $\mathcal{B}=\{\Lambda^{2j}\Sigma \mid j \in \llbracket 0, 2^{n-2}-1\rrbracket\}$,
\vskip 0.5mm
\noindent
- $\mathcal{C}=\{\Lambda^{2j+1}\Sigma \mid j \in \llbracket0, 2^{n-2}-1\rrbracket\}$.

\begin{lemma}\label{classedin}
La classe canonique de l'inertie du point de branchement $i$ de $\Gamma/\Q(T)$ est $\mathcal{B}$ ou $\mathcal{C}$. De plus, il existe un $k \in (\Z/2^{n-1} \Z)^{*}$ et un $j  \in \llbracket 1, 2^{n-2}\rrbracket$ impair tels que la classe canonique de l'inertie du point de branchement $s_k$ de $\Gamma/\Q(T)$ soit $\mathcal{A}_j$,
\end{lemma}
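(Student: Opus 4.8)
The plan is to determine the inertia groups of $\Gamma\oQ/\oQ(T)$ at $i$ and at the $s_{k}$ by climbing the tower $\oQ(T)\subset L\oQ\subset HE\oQ\subset HM_1M_2\oQ$, using that $\Gamma\subset HE$ with $\Gal(HE/\Q(T))=\Delta_n$ (Proposition \ref{Deltann}), that $HE=H\cdot E$ with $H\cap E=L$ (proof of Theorem \ref{quaternion}), and that $\Gamma=(HE)^{\langle\rho^{2^{n-2}}\sigma^{2}\rangle}$ with $\rho^{2^{n-2}}\sigma^{2}$ corresponding to $(2^{n-2},2)$ in $\Delta_n$. In particular $\Gal(HE/L)=\langle\rho\rangle\times\langle\sigma^{2}\rangle$, where $\langle\rho\rangle=\Gal(HE/H)$ restricts isomorphically to $\Gal(E/L)=\langle r\rangle$ and $\langle\sigma^{2}\rangle=\Gal(HE/E)$ restricts isomorphically to $\Gal(H/L)$; and the canonical inertia class of a branch point of $\Gamma\oQ/\oQ(T)$ is the image under $\Delta_n\to Q_{2^{n}}$ of the canonical inertia class of that same point for $HE\oQ/\oQ(T)$. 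Inertia groups of extensions of $\oQ(T)$ being cyclic, this will be used repeatedly.

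\emph{The point $i$.} At the unique prime $\mathfrak{p}_{i}$ of $\overline{\mathcal{O}}$ above $T-i$ the element $\sqrt{T^{2}+1}$ is a uniformizer, and since $\mu_{4}^{2}=T^{2}+1+T\sqrt{T^{2}+1}$ one gets $v_{\mathfrak{p}_{i}}(\mu_{4}^{2})=1$; hence $H\oQ/L\oQ$ is ramified at $\mathfrak{p}_{i}$ with index $2$, and since the ramification index at $i$ in $L\oQ/\oQ(T)$ is $2$, the inertia of $H\oQ/\oQ(T)$ at $i$ is all of $\Z/4\Z$. On the other hand $E\oQ/L\oQ$ is unramified above $i$, its ramification being supported on the $\mathcal{P}_{\ell,k}$, which lie over the $s_{k}\neq\pm i$ (Lemma \ref{ramggg}). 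It follows that the inertia of $HE\oQ/L\oQ$ at a prime above $\mathfrak{p}_{i}$ equals $\langle\sigma^{2}\rangle$, and that the inertia $I_{i}$ of $HE\oQ/\oQ(T)$ at $i$ is cyclic of order $4$ and contains $\sigma^{2}$; thus $I_{i}=\langle g\rangle$ with $g\in\Delta_n$ of order $4$ and $g^{2}=\sigma^{2}$, which forces $g=(a,b)$ with $b$ odd. Since $(2^{n-2},2)\notin\langle g\rangle$, the inertia of $\Gamma\oQ/\oQ(T)$ at $i$ is cyclic of order $4$, generated by the image of $g$, which is $\Lambda^{c}\Sigma$ for some $c$. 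As $2^{n-2}$ is even for $n\geq 3$, the other generator $(\Lambda^{c}\Sigma)^{-1}=\Lambda^{c+2^{n-2}}\Sigma$ has exponent of the same parity as $c$, so both generators, and in particular the distinguished one, lie in $\mathcal{B}$ or both in $\mathcal{C}$. This gives the first assertion.

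\emph{The points $s_{k}$.} Take $k=1$. The key input is \eqref{valsui} in the proof of Theorem \ref{princ}: at a prime $\mathcal{P}$ of $M_1M_2\oQ$ above $\mathcal{P}_{1,1}$ (with $e(\mathcal{P}/\mathcal{P}_{1,1})=2^{n-1}$ and $v_{\mathcal{P}}(x_{1,1})=1$, as in that proof) one has $v_{\mathcal{P}}(v_{0}-v_{1})=v_{\mathcal{P}}\bigl(2^{n-1}(1-\xi)\prod_{k}x_{2,k}^{r_{2^{n-1}}(-1/k)}x_{1,k}^{r_{2^{n-1}}(1/k)}\bigr)=v_{\mathcal{P}}(x_{1,1})=1$, the remaining factors being units at $\mathcal{P}$. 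Since $v_{0}-v_{1}\in E\oQ$ and the value group of $v_{\mathcal{P}}$ on $E\oQ$ is $e(\mathcal{P}/\mathcal{P}\cap E\oQ)\Z$, this forces $e(\mathcal{P}/\mathcal{P}\cap E\oQ)=1$ and hence $e(\mathcal{P}\cap E\oQ/\mathcal{P}_{1,1})=2^{n-1}$ in $E\oQ/L\oQ$. As $L\oQ/\oQ(T)$ is unramified at $s_{1}$, the inertia of $E\oQ/\oQ(T)$ at $s_{1}$ is the full group $\langle r\rangle\cong\Z/2^{n-1}\Z$; since moreover $H/\Q(T)$ is unramified at $s_{1}$ (Lemma \ref{Bragg}), the inertia of $HE\oQ/\oQ(T)$ at $s_{1}$ is $\langle\rho\rangle$, which injects into $Q_{2^{n}}$ with image $\langle\Lambda\rangle$ because $(2^{n-2},2)\notin\langle\rho\rangle$. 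Thus the inertia of $\Gamma\oQ/\oQ(T)$ at $s_{1}$ is $\langle\Lambda\rangle$, of order $2^{n-1}$, and any of its generators is $\Lambda^{j}$ with $j$ odd; hence the canonical inertia class at $s_{1}$ is $\mathcal{A}_{j}$ for an odd $j\in\llbracket 1,2^{n-2}\rrbracket$, as required.

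\emph{Main obstacle.} The delicate point is the repeated passage between inertia groups in the extensions $H/\Q(T)$, $E/\Q(T)$, $HE/\Q(T)$ and $\Gamma/\Q(T)$: one must carefully track which intermediate extensions are ramified or unramified above the prime considered and invoke the cyclicity of tame inertia, so as to pin the inertia groups down exactly as $\langle g\rangle$ and $\langle\rho\rangle$, not merely bound their orders. In contrast, the distinguished generator itself is never needed, since for $\mathcal{B}$, $\mathcal{C}$ and for the classes $\mathcal{A}_{j}$ with $j$ odd every generator of the relevant inertia group already belongs to the asserted class.
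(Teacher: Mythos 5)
Your proof is correct, but it takes a genuinely different route from the paper's. The paper argues non-constructively: for $i$ it simply notes that $C_i$ cannot be contained in $\Gal(\Gamma/L)=\langle \Lambda\rangle$ since $i$ is a branch point of $L/\Q(T)$, hence $C_i\in\{\mathcal{B},\mathcal{C}\}$; for the $s_k$ it combines the Branch Cycle Lemma (giving $C_i=C_{-i}$), Riemann's existence theorem (a product-one tuple generating $Q_{2^n}$) and a parity argument to show that the classes at $0$ and at the $s_k$ cannot all be trivial or of the form $\mathcal{A}_j$ with $j$ even (the subgroups $\langle\Lambda^2,\Sigma\rangle$ and $\langle\Lambda^2,\Lambda\Sigma\rangle$ being proper of order $2^{n-1}$), and finally rules out the point $0$ by bounding its ramification index by $2$ via $\Gamma\subset HM_1M_2$; it does not identify which $s_k$ works. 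You instead compute inertia groups directly in the tower $\oQ(T)\subset L\oQ\subset HE\oQ$: Kummer ramification of $\mu_4^2=T^2+1+T\sqrt{T^2+1}$ at $i$, and total ramification of $E\oQ/L\oQ$ above $\mathcal{P}_{1,1}$ deduced from $v_{\mathcal{P}}(v_0-v_1)=1$. This buys strictly more than the statement: you exhibit $k=1$, show the inertia at $s_1$ is all of $\langle\Lambda\rangle$ and that the inertia at $i$ has order exactly $4$, and you avoid both the Branch Cycle Lemma and Riemann's existence theorem; the price is reliance on the explicit form of $\mu_4$ and on sharpening \eqref{valsui} to an exact value. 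A few points to tidy: the equalities $e(\mathcal{P}/\mathcal{P}_{1,1})=2^{n-1}$ and $v_{\mathcal{P}}(x_{1,1})=1$ are not stated in the proof of Theorem \ref{princ} — they follow from Lemma \ref{unicycl} together with Abhyankar's lemma — and $v_{\mathcal{P}}(v_0-v_1)=1$ needs the remark that the terms with $j\neq 1$ have valuation $r_{2^{n-1}}(j)\geq 3$, so the $j=1$ term strictly dominates; when invoking Lemma \ref{Bragg} to say $H/\Q(T)$ is unramified at $s_1$ you should note $s_1\notin\{0,\infty, i,-i\}$; and your blanket claim that the canonical class of $\Gamma\oQ/\oQ(T)$ is the image of that of $HE\oQ/\oQ(T)$ is legitimate here exactly because $\langle(2^{n-2},2)\rangle$ meets the relevant inertia groups trivially (which you check), so the ramification indices do not drop — and, as you observe, only the fact that the distinguished generator generates the inertia group is ever used.
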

\begin{proof}[Preuve]
Pour tout $t \in \{ -i,i,0, \infty \} \cup \{ s_{k} \mid k \in (\Z/2^{n-1}\Z)^{*}\}$, notons $C_t$ la classe canonique de l'inertie de $t$ dans $\Gamma/\Q(T)$. Puisque $i$ est point de branchement de $L/\Q(T)$, la classe $C_i$ n'est pas contenue dans $\Gal(\Gamma/L)=\langle \Lambda \rangle$. De plus, on vérifie facilement que l'on a $\mathcal{B}^3= \mathcal{B}$ et $\mathcal{C}^3= \mathcal{C}$. Ainsi, par le {\it Branch Cycle Lemma} (voir \cite{Fri77} et \cite[Lemma 2.8]{Vol96}), on a $C_i=C_{-i}$.

Ensuite, puisque les points de branchement de $L/\Q(T)$ sont $i$ et $-i$, la classe $C_0$ est de type $\mathcal{A}$ ou triviale et il en est de même des classes $C_{s_k}$ (voir lemme \ref{ramggg}). Supposons que toutes ces classes de conjugaison soient triviales ou de la forme $\mathcal{A}_j$ avec $j$ pair. Puisque l'ensemble des points de branchement de $\Gamma/\Q(T)$ est contenu dans $\{ -i,i,0, \infty \} \cup \{ s_{k} \mid k \in (\Z/2^{n-1}\Z)^{*}\}$ (voir lemme \ref{Bragg}), le théorème d'existence de Riemann fournit $(g_i, g_{-i}, g_0, g_\infty, (g_{s_k})_k) \in C_i  \times C_{-i} \times C_0 \times C_\infty \times (C_{s_k})_k$ tel que $\langle g_i, g_{-i}, g_0, g_\infty, \{g_k \mid k\} \rangle = Q_{2^n}$ et $g_i  \cdot g_{-i} \cdot g_0 \cdot g_\infty \cdot \prod_{k} g_k =1$. Notons $g_i = \Lambda^{a_i} \Sigma$, $g_{-i} = \Lambda^{a_{-i}} \Sigma$, avec $a_i$ et $a_{-i}$ de même parité, $g_0= \Lambda^{2a_0}$, $g_\infty = \Lambda^{a_\infty}$ et $g_{s_k} = \Lambda^{2a_k}$ pour tout $k$. On obtient alors
$1 = g_i  \cdot g_{-i} \cdot g_0 \cdot g_\infty \cdot \prod_{k} g_k = \Lambda^v$
pour un certain entier $v$ de même parité que $a_i + a_{-i} + 2a_0 + a_\infty + \sum_k 2 a_k$. Par conséquent, $a_\infty$ est pair. La condition $\langle g_i, g_{-i}, g_0, g_\infty, \{g_k \mid k\} \rangle = Q_{2^n}$ entraîne alors $Q_{2^n} = \langle \Lambda^2, \Sigma \rangle$ (si $C_i = C_{-i} = \mathcal{B}$) ou $Q_{2^n} = \langle \Lambda^2, \Lambda \Sigma \rangle$ (si $C_i = C_{-i} = \mathcal{C}$), ce qui est impossible. En effet, $\langle \Lambda^2 \rangle$ est un sous-groupe distingué d'ordre $2^{n-2}$ et $\Sigma^2=\Lambda^{2^{n-2}} \in \langle \Lambda^2 \rangle $ (resp. $(\Lambda\Sigma)^2=\Lambda^{2^{n-2}} \in \langle \Lambda^2 \rangle $), donc $| \langle \Lambda^2, \Sigma \rangle|$ (resp. $| \langle \Lambda^2, \Lambda \Sigma \rangle|$) est $2^{n-1}$ alors que $Q_{2^n}$ est d'ordre $2^n$. Ainsi, soit la deuxième partie du lemme est vraie, soit l'indice de ramification de $\langle T \rangle$ dans $\Gamma \oQ / \oQ(T)$ vaut $2^{n-1}$. Or, cette dernière conclusion est impossible car $0$ n'est pas un point de branchement de $M_1 M_2 /\Q(T)$ (voir lemme \ref{Bragg}) et l'indice de ramification de $\langle T \rangle $ dans $H \oQ / \oQ (T)$ vaut au plus $2$. 
\end{proof}

\subsubsection{Théorème principal}
Notons $m(X) = \prod_{k \in (\Z/2^{n-1}\Z)^*} (X-s_k) \in \Q[X]$. Par le lemme \ref{unitariser}, les dénominateurs des coefficients de $m(X)$ sont des puissances de 2.

\begin{lemma}\label{divpremier}
 Pour $p \equiv 1 \pmod {2^{n-1}}$ premier, il existe $t \in \Z$ tel que $v_{p}(m(t))>0$.
 \end{lemma}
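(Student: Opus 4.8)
The plan is to reduce $m(X)$ modulo a well-chosen prime above $p$ and to exhibit a root of the reduction in $\mathbb{F}_p$. Since $n\geq 3$ and $p\equiv 1\pmod{2^{n-1}}$, the prime $p$ is odd and the cyclic group $\mathbb{F}_p^{*}$ has an element $\zeta$ of order $2^{n-1}$; as $\zeta^{2^{n-2}}=-1$, the assignment $\xi\mapsto\zeta$ defines a ring homomorphism $\Z[\xi]\to\mathbb{F}_p$ whose kernel is a prime ideal lying above $p$, with residue field $\mathbb{F}_p$. I will show that the image $\overline{m}(X)\in\mathbb{F}_p[X]$ of $m(X)$ under this reduction splits as $\prod_k(X-\overline{s_k})$ with $\overline{s_k}\in\mathbb{F}_p$, after which the conclusion is immediate.

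First I would verify that the $s_k$ and the coefficients of $m$ are $p$-integral. By \eqref{egalsk}, $s_k=\xi^k(2-\xi^k)/(2(1-\xi^k))$; since the norm of $1-\xi^k$ in $\Q(\xi)/\Q$ is a power of $2$ (as recalled in the proof of Lemma \ref{unitariser}), the element $1/(1-\xi^k)$ lies in $\Z[\xi][1/2]$, hence so does $s_k$. Consequently the coefficients of $m(X)$, being rational and symmetric functions of the $s_k$, lie in $\Z[\xi][1/2]\cap\Q=\Z[1/2]$, recovering the fact that their denominators are powers of $2$. As $p$ is odd, $2$ is invertible in $\mathbb{F}_p$, so the homomorphism $\Z[\xi]\to\mathbb{F}_p$ above extends to $\Z[\xi][1/2]\to\mathbb{F}_p$; it sends $s_k$ to $\overline{s_k}=-(1-\zeta^k)/2+1/(2(1-\zeta^k))\in\mathbb{F}_p$ (well defined since $\zeta^k\neq 1$, as $\gcd(k,2^{n-1})=1$), and it sends $m(X)$ to some $\overline{m}(X)\in\mathbb{F}_p[X]$. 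Applying it coefficientwise to $m(X)=\prod_{k\in(\Z/2^{n-1}\Z)^{*}}(X-s_k)$ gives $\overline{m}(X)=\prod_k(X-\overline{s_k})$.

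To finish, I would pick any $t\in\Z$ with $t\equiv\overline{s_1}\pmod p$, viewing $\overline{s_1}$ as an element of $\mathbb{F}_p=\Z/p\Z$. Then $\overline{m}(t)=\prod_k(t-\overline{s_k})=0$ in $\mathbb{F}_p$, since the factor indexed by $k=1$ vanishes; equivalently $m(t)\in\Z_{p\Z}$ is divisible by $p$, that is, $v_p(m(t))>0$, as desired. I do not expect a serious obstacle: the argument relies only on the classical fact that $p\equiv 1\pmod{2^{n-1}}$ forces $\mathbb{F}_p$ to contain a primitive $2^{n-1}$-th root of unity, and on the $p$-integrality of the $s_k$, which follows from $p$ being odd together with the norm computation already used for Lemma \ref{unitariser}. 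The one point requiring a little care is to carry out the reduction inside $\Z[\xi][1/2]\subset\Z_{p\Z}[\xi]$, so that it is a genuine ring homomorphism compatible with the factorisation of $m(X)$, rather than trying to interpret the symbol $s_k\bmod p$ naively.
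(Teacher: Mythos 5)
Your argument is correct and is essentially the paper's proof: both reduce the factorisation $m(X)=\prod_k(X-s_k)$, whose roots are $2$-integral, at a prime of residue degree one above $p$ in $\Q(\xi)$ and pull the resulting root in $\mathbb{F}_p$ back to an integer $t$. The only (cosmetic) difference is that you build the reduction map explicitly by sending $\xi$ to a primitive $2^{n-1}$-th root of unity $\zeta\in\mathbb{F}_p^{*}$, whereas the paper simply invokes that $p\equiv 1 \pmod{2^{n-1}}$ forces the residue degree $f(\mathcal{P}/p)$ to equal $1$ in the integral closure of $\Z[1/2]$ in $\Q(\xi)$.
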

\begin{proof}[Preuve]
Notons $O$ la clôture intégrale du localisé $\Z_{[2]}$ de $\Z$ en la partie multiplicative $\{2^j \mid j \geq 0 \}$ dans $\Q(\xi)$. Soit $\mathcal{P}$ un idéal premier de $O$ au dessus de $p\Z_{[2]}$. Puisque $p \equiv 1 \pmod{2^{n-1}}$, le degré résiduel $f(\mathcal{P}/p\Z_{[2]})$ est égal à $1$, c'est-à-dire $O/\mathcal{P}=\Z_{[2]}/p\Z_{[2]}$. Il existe donc $t \in \Z_{[2]}$ tel que $m(t)=0$ modulo $p\Z_{[2]}$. On peut en fait choisir $t$ dans $\Z$, ce qui conclut la démonstration.
\end{proof}
\begin{theorem} \label{thm:spec_arith}
Soient $p$ et $q$ deux nombres premiers distincts ne divisant pas $2^{2^{n-2}}+1$, qui vérifient les conditions du lemme \ref{ssssoi} et tels que $p \equiv 1 \pmod {2^{n-1}}$ et $q \equiv 1 \pmod {4}$. Soit $t_0 \in \N$ vérifiant $v_{p}(m(t_{0})) = 1$ et $v_{q}(t_{0}^{2}+1)=1$. Alors la spécialisation $\Gamma_{t}/\Q$ de $\Gamma/\Q(T)$ en $t$ est galoisienne de groupe $Q_{2^{n}}$ pour tout $t \equiv t_0 \pmod {p^2q^2}$.
\end{theorem}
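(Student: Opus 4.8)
The plan is to apply the specialization--inertia theorem (Theorem~\ref{spinth}) at the primes $p$ and $q$ and to prove that the two resulting inertia subgroups of $\Gal(\Gamma_t/\Q)$ already generate $Q_{2^n}$. Since $\Gamma/\Q(T)$ is a $\Q$-regular $Q_{2^n}$-extension (Theorem~\ref{quaternion}), for any $t\in\N$ outside the finite set of branch points of $\Gamma/\Q(T)$ the specialization $\Gamma_t/\Q$ is Galois and $D:=\Gal(\Gamma_t/\Q)$ is the decomposition group at a prime above $\langle T-t\rangle$, a subgroup of $Q_{2^n}$; the goal is thus to show $D=Q_{2^n}$.

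First I would dispose of the bookkeeping. By Proposition~\ref{bonpremier}, $p$ and $q$ are good primes for $\Gamma/\Q(T)$. As the $s_k$ ($k\in(\Z/2^{n-1}\Z)^*$) are pairwise $\Q$-conjugate, $m(X)$ is their common (irreducible, of degree $2^{n-2}$) minimal polynomial over $\Q$, so in particular $s_k\notin\Q$; likewise $X^2+1$ is the minimal polynomial of $\pm i$. Both have $2$-power denominators, and by Lemma~\ref{unitariser} the $s_k$ and $\pm i$---hence their inverses---are units above every odd prime, so the integrality hypotheses of Theorem~\ref{spinth} at $p$ and $q$ on the minimal polynomials of $t_j$ and of $1/t_j$ are satisfied. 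Fix $t\in\N$ with $t\equiv t_0\pmod{p^2q^2}$. If $p\mid t_0$ then $m(t_0)\equiv m(0)\pmod p$ is a $p$-unit, against $v_p(m(t_0))=1$; similarly $q\nmid t_0$. Hence $v_p(t)=v_q(t)=0$, so $t\neq 0$, and being a positive integer $t$ is none of $-i,i,0,\infty,s_k$: it is not a branch point. Since the coefficients of $m$ are $p$-integral, $t\equiv t_0\pmod{p^2}$ gives $v_p(m(t))=v_p(m(t_0))=1$, and likewise $v_q(t^2+1)=v_q(t_0^2+1)=1$.

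Now the core. As $v_p(m(t))=1>0$, $t$ meets a branch point $s_k$ modulo $p$ (recall $s_k$ is a $p$-unit and $t\in\Z$); by Lemma~\ref{classedin} and the Branch Cycle Lemma---the $s_k$ being $\Q$-conjugate, their canonical inertia classes are all of the form $\mathcal{A}_{j}$ with $j$ odd, and in particular all $s_k$ are branch points---Theorem~\ref{spinth} with $a=v_p(m(t))=1$ gives that the inertia group $I_p\subseteq D$ at $p$ is $Q_{2^n}$-conjugate to $\langle g_{s_k}\rangle$; since $g_{s_k}\in\mathcal{A}_{j}$ is conjugate to $\Lambda^{j}$ and $j$ is odd, $I_p$ is conjugate to $\langle\Lambda^{j}\rangle=\langle\Lambda\rangle$. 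As $\langle\Lambda\rangle\trianglelefteq Q_{2^n}$, this forces $I_p=\langle\Lambda\rangle$, hence $\langle\Lambda\rangle\subseteq D$ and $[Q_{2^n}:D]\leq 2$. Similarly $i$ is a branch point (already ramified in $L/\Q(T)$), $v_q(t^2+1)=1>0$ shows $t$ meets $i$ modulo $q$, and Theorem~\ref{spinth} with $a=v_q(t^2+1)=1$ together with Lemma~\ref{classedin} (the canonical inertia class of $i$ lies in $\mathcal{B}$ or $\mathcal{C}$) gives that the inertia group $I_q\subseteq D$ at $q$ is $Q_{2^n}$-conjugate to $\langle g_i\rangle$, a cyclic group of order $4$ whose image in $Q_{2^n}/\langle\Lambda\rangle\cong\Z/2\Z$ is non-trivial.

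To finish, assume $D\neq Q_{2^n}$. Then $[Q_{2^n}:D]=2$, so $|D|=2^{n-1}=|\langle\Lambda\rangle|$ and $\langle\Lambda\rangle\subseteq D$ force $D=\langle\Lambda\rangle$; but $I_q\subseteq D=\langle\Lambda\rangle$ contradicts the fact that every $Q_{2^n}$-conjugate of $\langle g_i\rangle$ has non-trivial image in $Q_{2^n}/\langle\Lambda\rangle$. Therefore $D=Q_{2^n}$, i.e. $\Gamma_t/\Q$ is a $Q_{2^n}$-extension for every such $t$. The main obstacle is the careful verification that Theorem~\ref{spinth} applies at both primes---good prime, integrality of the relevant minimal polynomials and their reciprocals, $t$ a non-branch point, and especially the inertia exponent $a$ being exactly $1$, which is precisely what $v_p(m(t_0))=1$ and $v_q(t_0^2+1)=1$ secure; the concluding group-theoretic step is then immediate.
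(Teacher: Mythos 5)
Your proposal is correct and takes essentially the same route as the paper: good primes via Proposition \ref{bonpremier}, the specialization--inertia theorem (théorème \ref{spinth}) applied at $p$ and $q$ with exponent $a=1$ coming from $v_p(m(t))=v_q(t^2+1)=1$, the inertia classes from the lemme \ref{classedin}, and the same group-theoretic conclusion that a generator of $\langle \Lambda\rangle$ together with an element of $\mathcal{B}\cup\mathcal{C}$ generates $Q_{2^n}$. The only (harmless) variation is that you invoke the Branch Cycle Lemma to ensure that \emph{every} $s_k$ has class $\mathcal{A}_j$ with $j$ odd, whereas the paper instead uses the $\Q$-conjugacy of the $s_k$ to see that $t$ meets, modulo $p$, the particular $s_k$ furnished by the lemme \ref{classedin}.
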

\begin{proof}[Preuve]
Notons tout d'abord que $t_0$ comme dans l'énoncé existe. En effet, puisque $p \equiv 1 \pmod{2^{n-1}}$, il existe $t_{0,p}$ dans $\Z$ tel que $v_{p}(m(t_{0,p}))>0$ (voir lemme \ref{divpremier}). De plus, d'après l'hypothèse $q \equiv 1 \pmod{4}$, il existe $t_{0,q}$ dans $\Z$ tel que $v_{q}(t_{0,q}^2+1)>0$. D'après \cite[Lemma 3.12]{Leg16}, quitte à changer $t_{0,p}$ et $t_{0,q}$, on peut supposer $v_{p}(m(t_{0,p}))=1$ et $v_{q}(t_{0,q}^2+1)=1$. Le théorème chinois fournit alors un $t_0 \in \Z$ tel que $v_{p}(t_0-t_{0,p}) \geq 2$ et $v_{q}(t_0-t_{0,q}) \geq 2$. Enfin, \cite[Remark 2.11]{Leg16} montre que $t_0$ vérifie les conditions de l'énoncé.  

Fixons maintenant $t \equiv t_0 \pmod {p^2q^2}$. D'après les hypothèses $v_{p}(m(t_{0})) = 1$ et $v_{q}(t_{0}^{2}+1)=1$, on a
$v_{p}(m(t)) = 1$ et $v_{q}(t^{2}+1)=1$. En particulier, $t$ n'est pas un point de branchement de $\Gamma/\Q(T)$ (voir lemme \ref{Bragg}). Par \cite[Lemma 2.5]{Leg16}, on obtient que $t$ et $s_1$ (resp. $t$ et $i$) se rencontrent modulo $p$ (resp. $q$). En outre, d'après le lemme \ref{bonpremier}, les nombres premiers $p$ et $q$ sont de bons premiers pour $\Gamma/\Q(T)$. Enfin, $p$ unitarise $s_1$ par le lemme \ref{unitariser} et il est clair que $q$ unitarise $i$. Si $k$ désigne l'élément de $(\Z/2^{n-1}\Z)^{*}$ fourni par le lemme \ref{classedin}, le théorème \ref{spinth} assure alors que $\Gal(\Gamma_t/\Q)$ contient un élément de la classe canonique de l'inertie $C_{s_k}$ (resp. $C_i$) du point de branchement $s_k$ (resp. $i$) de $\Gamma/\Q(T)$. Par le lemme \ref{classedin}, on obtient que $\Gal(\Gamma_t/\Q)$ contient un élément de $\mathcal{A}_j$ pour un certain $j \in \llbracket 1, 2^{n-2}\rrbracket$ impair et, soit un élément de $\mathcal{B}$, soit un élément de $\mathcal{C}$. On obtient alors $\Gal(\Gamma_t/\Q)=Q_{2^n}$, ce qui achève la démonstration.
\end{proof}
Pour conclure ce texte, on détermine pour  $n=3$ un exemple explicite de $p$, $q$ et $t_0$ comme dans le théorème. D'abord, on remarque que $m(X)=X^2+X/2+5/8$. Ensuite, on s'assure que $p=53$ et $q=61$ vérifient les conditions du théorème. Enfin, des exemples de $t_0$ sont $804$,
$865$ et $1758$. Nous laissons au lecteur intéressé le soin de donner davantage d'exemples numériques. 
\bibliography{Biblio2}
\bibliographystyle{alpha}
\end{document}